\newcommand{\IB}[1]{\textcolor{purple}{(IB: #1)}}
\newcommand{\cO}{\mathcal{O}}
\newcommand{\cX}{\mathcal{X}}
\newcommand{\cS}{\mathcal{S}}
\title[Arithmetic and geometric deformations of 3-folds]
{Arithmetic and geometric deformations of 3-folds} 
\author{Fabio Bernasconi, Iacopo Brivio, and Stefano Filipazzi} 
\subjclass[2020]{Primary: 14B07, 14E30, 14G17;
Secondary: 14J10.}
\keywords{Singularities of the MMP, deformations, positive and mixed characteristics.}
\thanks{Part of this work was carried out during a visit of IB at EPFL.
He thanks the Chair of Algebraic Geometry at EPFL for the support provided. FB was partly supported by the grant $\#200021/169639$ from the
Swiss National Science Foundation, IB was supported by the National Center for Theoretical Sciences and a grant from the Ministry of Science and Technology,
grant number MOST-110-2123-M-002-005, SF was partly supported by ERC starting grant $\#804334$.
}
\address{EPFL SB MATH CAG
	MA  C3 625  (B\^atiment MA)
	Station 8
	CH-1015 Lausanne} 
\email{fabio.bernasconi@epfl.ch}
\address{National Center for Theoretical Sciences, Taipei, 106, Taiwan}
\email{ibrivio@ncts.ntu.edu.tw}
\address{EPFL SB MATH CAG
	MA  C3 625  (B\^atiment MA)
	Station 8
	CH-1015 Lausanne} 
\email{stefano.filipazzi@epfl.ch}
\DeclareMathOperator{\cent}{center}
\DeclareMathOperator{\cod}{codim}
\newcommand{\R}{\mathbb{R}}
\begin{document}
	
\maketitle

\begin{abstract}
We show that mixed-characteristic and equi-characteristic small deformations of 3-dimensional canonical (resp.~terminal) singularities with perfect residue field of characteristic $p>5$ are canonical (resp.~terminal). We discuss applications to arithmetic and geometric families of 3-dimensional Fano varieties and minimal models with canonical singularities.
Our results are contingent upon the existence of log resolutions for 4-folds.
\end{abstract}

\tableofcontents

\section{Introduction}

Singularities and their deformations have played a pivotal role in the development of birational geometry and the moduli theory of higher-dimensional algebraic varieties.
For instance, terminal singularities appear when running the Minimal Model Program (MMP, for short) for regular projective varieties, and canonical singularities appear when considering their canonical models.
For this reason, it is natural to ask whether these classes of singularities are stable under small deformations, i.e., deformations over spectra of discrete valuation rings (see \autoref{def: family_pairs}).

In equi-characteristic 0, the picture is nowadays quite complete. Inspired by the techniques used by Siu in the proof of invariance of plurigenera (\cite{Siu98}), Kawamata showed that small deformations of canonical singularities remain canonical using vanishing theorems for multiplier ideals sheaves (\cite{Kaw99}). By a similar strategy, one can show that the analogous result for terminal singularities also holds, as explained in \cite{Nak_ZDA}*{Chapter VI, \S~5}.
As for the larger class of klt (resp.~ log canonical) singularities, these are known to be stable under small deformations if the canonical divisor of the total space of the deformation is $\mathbb{Q}$-Cartier by plt (resp.~ log canonical) inversion of adjunction (\cites{km-book, kawakita}).
We refer to \cite{Ish18}*{Chapter 9} for a brief and clear proof of these results, using the MMP as developed in the seminal work \cite{BCHM10}. 
We remark that klt and log canonical singularities are also stable for small deformations under the weaker hypothesis that the canonical divisor of the generic fibre is $\mathbb{Q}$-Cartier: the case of surfaces has been proven in \cites{EV85, Ish86} and recently Sato and Takagi extended those results to higher dimension in \cite{SS22} using the theory of adjoint and non-log canonical ideal sheaves, which extend the notion of multiplier ideal sheaf to the non-$\bQ$-Gorenstein setting.
This is optimal as shown in \cite{Ish18}*{Example 9.1.8}.

In the case of terminal and canonical singularities, no requirement is needed on the canonical divisor of the total space of the family: indeed, as canonical singularities are Gorenstein in codimension 2 and they satisfy Serre's condition $(S_3)$ (\cite{Elk81}), an extension theorem due to Grothendieck (\cite{Gro68}*{XI.2.2}) implies that the canonical divisor of the family is $\mathbb{Q}$-Cartier; see also \cite{dFH11}.

In this article, we are interested in the deformation theory of the singularities appearing in birational geometry in positive and mixed characteristics.
As mentioned above, in characteristic 0 this topic can be studied via vanishing theorems and the MMP.
In positive or mixed characteristics, these results are in general false or unknown, respectively.
For this reason, we limit our discussion to the case of 3-dimensional singularities.
In this setup, the MMP has been recently developed in the series of articles \cites{HX15, Bir16, HW19, HW20, 7authors} and some vanishing theorems have been established in \cites{ABL20, HW19, BK20}, thus making it possible to approach the problem using a strategy analogous to \cite{Ish18}*{Chapter 9}.

To study deformations of 3-fold singularities over a DVR, we need to understand the singularities of the total space of the deformation, which is a 4-fold.
 In the recent article \cite{HW20}, the authors prove a birational version of the MMP for 4-folds in positive and mixed characteristics which in turn implies plt inversion of adjunction for $\mathbb{Q}$-factorial 4-folds with perfect residue field of characteristic $p>5$. 
As the $\mathbb{Q}$-factoriality assumption is too restrictive for studying general deformations of singularities, we use the recent results on the semi-stable MMP \cites{HW20, XX22} to establish inversion of adjunction for families of 3-folds.

Throughout this article we assume the existence of log resolutions for 4-folds (see \autoref{hyp}).

\begin{theorem}[cf.~\autoref{cor: plt-inv-adj}, \autoref{cor: inv-adjunction}] \label{t-inv-adjunction}
	Let $R$ be an excellent DVR with perfect residue field $k$ of characteristic $p>5$.
	Let $(X, \Delta) \to \Spec(R)$ be a family of 3-dimensional couples such that $K_X+\Delta$ is $\mathbb{Q}$-Cartier.
	If $(X_k, \Delta_k)$ is klt (resp.~ log canonical), then $(X, X_k+\Delta)$ is plt (resp.~ log canonical).
\end{theorem}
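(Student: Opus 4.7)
The plan is to adapt the MMP-based proof of inversion of adjunction from characteristic zero (\cite{Ish18}*{Chapter 9}) to the present setting, using the semi-stable $4$-fold MMP in positive and mixed characteristic \cites{HW20, XX22} in place of \cite{BCHM10}. This MMP is available in characteristic $p>5$ with perfect residue field. The key structural observation is that $X_k$ is Cartier on $X$ (being cut out by a uniformizer of $R$) and appears with coefficient one in $X_k+\Delta$, so we are genuinely in the semi-stable setting. Since the statement is local around $X_k$, we may shrink $X$ near this fiber.

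First, I would take a log resolution $f\colon Y\to X$ of $(X, X_k+\Delta)$, granted by \autoref{hyp}, and run a $(K_Y+\tilde{X}_k+\tilde{\Delta}+E)$-MMP relative to $X$, where $\tilde{X}_k$ and $\tilde{\Delta}$ denote the strict transforms and $E$ is the sum of the $f$-exceptional components lying over $X_k$. By the semi-stable MMP of \cites{HW20, XX22}, this MMP exists and terminates, producing a dlt modification $g\colon Y'\to X$ of $(X, X_k+\Delta)$, in the sense that $(Y', g^{-1}_\ast(X_k+\Delta)+\Exc(g))$ is dlt and $K_{Y'}+g^{-1}_\ast(X_k+\Delta)+\Exc(g)-g^\ast(K_X+X_k+\Delta)$ is effective and $g$-exceptional.

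Next, I would restrict to the reduced special fiber $Y'_k$ of $Y'\to \Spec R$. Adjunction yields a semi-dlt pair
\[
\bigl(K_{Y'}+Y'_k+g^{-1}_\ast \Delta\bigr)\big|_{Y'_k} = K_{Y'_k}+\Diff_{Y'_k}\bigl(g^{-1}_\ast \Delta\bigr),
\]
and each irreducible component of $Y'_k$ admits a projective birational morphism onto a subvariety of $X_k$ via $g$. Combining the klt (resp.\ log canonical) hypothesis on $(X_k,\Delta_k)$ with the $3$-fold MMP in positive and mixed characteristic \cites{HX15, Bir16, HW19, 7authors}, I would control the log discrepancies along each component of $Y'_k$, and thereby show that no $g$-exceptional divisor other than the components of $Y'_k\setminus\tilde{X}_k$ has log discrepancy violating the plt (resp.\ lc) condition. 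Pushing this information down via $g$ yields the theorem.

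The main obstacle is controlling the singularities of the semi-dlt pair $(Y'_k, \Diff_{Y'_k}(g^{-1}_\ast \Delta))$ and comparing them, component by component, to those of $(X_k,\Delta_k)$: one must track coefficients and log discrepancies carefully through the semi-stable MMP and the subsequent adjunction, and ensure that the semi-dlt structure on $Y'_k$ is fully compatible with the klt/lc hypothesis on $X_k$ (an inductive argument on strata will likely be needed). A further delicate point is the termination of the $4$-fold MMP in this semi-stable setting, which we import from the cited results.
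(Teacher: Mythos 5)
Your overall strategy coincides with the paper's: reduce to the local case, use the semi-stable $4$-fold MMP of \cites{HW20,XX22} to produce a dlt modification of $(X,X_k+\Delta)$ (this is exactly \autoref{cor: existence-dlt-modificatin}, built on \autoref{t-compactmmp}), and then pass to the special fibre by adjunction to contradict the hypothesis on $(X_k,\Delta_k)$. However, the decisive step is left unresolved in your write-up, and it is precisely the point where inversion of adjunction is won or lost. Writing $\pi^*(K_X+X_k+\Delta)=K_Y+\pi_*^{-1}(X_k+\Delta)+\sum E_i+F$ on the dlt modification, the negativity lemma gives $F\geq 0$, and $(X,X_k+\Delta)$ fails to be log canonical exactly when $F\neq 0$. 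A priori, $\Supp(F)$ could consist entirely of exceptional divisors whose intersection with the special fibre of $Y$ lies only in the \emph{exceptional} components of $Y_{k,\red}$, in which case adjunction to those components says nothing about $(X_k,\Delta_k)$. You gesture at this ("compare component by component", "an inductive argument on strata will likely be needed"), but you do not supply the argument that forces the defect to be visible on the strict transform $\pi_*^{-1}X_k$ itself.

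The paper closes this gap without any induction on strata or appeal to the $3$-fold MMP: since $K_Y+\pi_*^{-1}(X_k+\Delta)+\sum E_i$ is $\pi$-nef, $-F$ is $\pi$-nef, anti-effective and $\pi$-exceptional, so by the standard connectedness observation (\autoref{rmk: inv_of_adj}) every fibre $\pi^{-1}(x)$ meeting $\Supp(F)$ is entirely contained in $\Supp(F)$. Because $X\to\Spec(R)$ is a closed morphism, every $\pi$-exceptional divisor has centre meeting $X_k$, and $\pi_*^{-1}X_k\to X_k$ is surjective, so $\Supp(F)\cap\pi_*^{-1}X_k\neq\emptyset$; $\mathbb{Q}$-factoriality of $Y$ then makes this intersection a divisor, and adjunction to $(\pi_*^{-1}X_k)^\nu$ produces a crepant model of $(X_k^\nu,\Delta_k^\nu+D)$ with a boundary coefficient exceeding $1$, the desired contradiction (see the proof of \autoref{cor: inv-adjunction}; the klt/plt case is \autoref{cor: plt-inv-adj}, which cites the argument of \cite{HW20}*{Corollary 4.9}). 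You should incorporate this nefness-plus-connectedness argument explicitly; without it the proof is incomplete, since the "push the information down via $g$" step has no content for exceptional non-lc places whose link to $\pi_*^{-1}X_k$ has not been established.
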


For a more general version, where the normalisation of the central fibre is log canonical we refer to \autoref{cor: inv-adjunction}.
Analogous statements have been recently proven for deformations of $F$-regular and $F$-pure singularities of arbitrary dimensions in \cites{MS21, ST21}.

Using the Cohen--Macaulay condition on the central fibre proven in \cite{BK20} (relying on the vanishing theorems proven in \cite{ABL20})
and the existence of terminalisations for families of klt pairs, we show that terminal and canonical 3-dimensional singularities are stable under small deformations. 
The $\mathbb{Q}$-factorial terminal case has been proven in \cite{HW20}*{Corollary 6.7}.

\begin{theorem} \label{thm: def_canonical}
	Let $R$ be an excellent DVR with perfect residue field $k$ of characteristic $p>5$.
	Let $(X, \Delta) \to \Spec(R)$ be a family of 3-dimensional couples.
	If $(X_k, \Delta_k)$ is canonical (resp.~ terminal), then $(X, \Delta)$ is canonical (resp.~ terminal) and the generic fibre $(X_{{K}}, \Delta_{{K}})$ is geometrically canonical (resp.~ terminal).
    Moreover, if $\Delta$ is a $\mathbb Q$-divisor with Weil index $I_1$ and the Cartier index of $K_{X_k}+\Delta_k$ is $I_2$, then the Cartier index of $K_X+\Delta$ is $\mathrm{lcm}(I_1,I_2)$.
    In particular, if $\Delta=0$, the Cartier index of $K_X$ coincides with the Cartier index of $K_{X_k}$.
\end{theorem}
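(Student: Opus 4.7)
I would divide the argument into three parts: (a) $(X,\Delta)$ is canonical (resp.\ terminal); (b) the generic fibre $(X_K, \Delta_K)$ is geometrically canonical (resp.\ terminal); (c) the Cartier index computation.

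For (a), the starting point is \autoref{t-inv-adjunction}, which gives that $(X, X_k+\Delta)$ is plt (since canonical and terminal both imply klt). For any exceptional prime divisor $E$ over $X$ whose centre lies in the Cartier divisor $X_k$, on a common log resolution the discrepancy identity
\[
a(E;X,\Delta) \;=\; a(E;X,X_k+\Delta) + \mathrm{ord}_E(X_k)
\]
combined with the plt inequality $a(E;X,X_k+\Delta)>-1$ and $\mathrm{ord}_E(X_k)\geq 1$ yields $a(E;X,\Delta)>0$. Hence divisors centred in $X_k$ have strictly positive discrepancy and obstruct neither canonicity nor terminality. The remaining exceptional divisors have centre dominating $\Spec R$, and for those the discrepancy on $X$ coincides with the discrepancy on the generic fibre. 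Therefore (a) reduces to (b).

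For (b), I would use the 4-fold MMP of \cite{HW20} to construct a relative $\mathbb Q$-factorial terminalisation $f:Y\to X$ of the klt pair $(X,\Delta)$, so that $(Y,\Delta_Y)$ is $\mathbb Q$-factorial terminal with $K_Y+\Delta_Y=f^*(K_X+\Delta)$. By the observation in (a), $f$ extracts no divisors with centre in $X_k$, so $f_k:Y_k\to X_k$ is small birational. The Cohen--Macaulay property of canonical 3-fold singularities from \cite{BK20} ensures $Y_k$ is normal and that $(Y_k,\Delta_{Y,k})$ is canonical (resp.\ terminal) as a small crepant modification of $(X_k,\Delta_k)$. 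In the terminal case $f_k$ must be an isomorphism, so $Y_k\cong X_k$ is $\mathbb Q$-factorial terminal, and applying \cite{HW20}*{Corollary 6.7} to $Y\to\Spec R$ gives terminality of $(Y_K,\Delta_{Y,K})$; since $f_K$ is crepant this forces $(X_K,\Delta_K)$ to be terminal as well. The canonical case requires an additional layer: take a $\mathbb Q$-factorial terminalisation of the canonical $(Y_k,\Delta_{Y,k})$, lift it to a relative terminalisation of the family, apply \cite{HW20}*{Corollary 6.7} to the resulting terminal 4-fold, and descend through crepant morphisms to obtain canonicity of $(X_K,\Delta_K)$.

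For (c), let $N$ denote the Cartier index of $K_X+\Delta$. Then $I_1\mid N$ (since $NK_X$ must be an integral Weil divisor) and $I_2\mid N$ (by restriction to $X_k$). Conversely, setting $L=\mathrm{lcm}(I_1,I_2)$, the reflexive rank-one sheaf $\mathcal O_X(L(K_X+\Delta))$ is locally free along $X_k$ by hypothesis; the Cohen--Macaulay property of $X_k$ from \cite{BK20}, together with flatness over the regular base $\Spec R$, then allows one to conclude via the fibrewise criterion for local freeness that it is locally free on all of $X$, giving $N\mid L$. The main obstacle is (b): while inversion of adjunction reduces the question to the generic fibre, controlling the generic fibre requires the full strength of the 4-fold MMP of \cite{HW20} together with the deformation invariance of $\mathbb Q$-factorial terminality \cite{HW20}*{Corollary 6.7}, and the canonical (non-terminal) case is especially delicate because one cannot directly apply the terminal deformation result to $(Y_k,\Delta_{Y,k})$ and must instead iterate the terminalisation construction.
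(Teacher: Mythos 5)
Your outline gets the index computation and the reduction via \autoref{lem: lemma_trick} roughly right, but there are two genuine gaps. First, an ordering problem that is more than cosmetic: before you can speak of $a(E;X,\Delta)$, apply \autoref{t-inv-adjunction}, or build a terminalisation, you must show that $K_X+\Delta$ is $\mathbb{R}$-Cartier --- this is not a hypothesis of the theorem and is the first substantial step of the paper's proof. The paper decomposes $\Delta=\sum d_iD_i$ into $\mathbb{Q}$-divisors, checks $K_X+D_i$ is $\mathbb{Q}$-Cartier outside a subset meeting $X_k$ in codimension $\geq 3$ (using that canonical surface singularities are Gorenstein), and then invokes the $(S_3)$ property of $X_k$ from \cite{BK20} together with \cite{dFH11}*{Proposition 3.1} to extend the Cartier property across that subset; this simultaneously yields the $\mathrm{lcm}(I_1,I_2)$ statement. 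Your part (c) contains the germ of this but places it last and misstates it: $\mathcal{O}_X(L(K_X+\Delta))$ is \emph{not} ``locally free along $X_k$ by hypothesis'' --- the hypothesis only gives local freeness of the restriction to $X_k$, and promoting that to local freeness in a neighbourhood is precisely what requires $(S_3)$. You also never treat $\mathbb{R}$-coefficients.

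Second, your part (b), which carries the whole weight of your argument, does not work as written. The claim that $f_k$ small forces $f_k$ to be an isomorphism in the terminal case is false without $\mathbb{Q}$-factoriality of $X_k$ (a small $\mathbb{Q}$-factorialisation of an ordinary double point is a counterexample), and \cite{HW20}*{Corollary 6.7} requires exactly that $\mathbb{Q}$-factoriality, which is not available. Likewise, ``lift a terminalisation of $(Y_k,\Delta_{Y,k})$ to a relative terminalisation of the family'' is not a construction you possess: one builds modifications of the $4$-fold and restricts to the fibre, not the reverse. The paper avoids all of this by proving canonicity of $(X,\Delta)$ \emph{directly}: given a valuation $P$ with $a(P,X,\Delta)<0$, \autoref{lem: lemma_trick} forces $\cent_X(P)$ to dominate $\Spec(R)$, \autoref{cor: existence_extractions} extracts $P$ on some $Z\to X$ with $-P_Z$ relatively nef, so $P_Z$ meets $\pi^{-1}_*X_k$ in a divisor exceptional over $X_k$, and adjunction to $(\pi^{-1}_*X_k)^\nu$ transports the negative discrepancy down to $(X_k,\Delta_k)$, contradicting canonicity of the central fibre. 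The terminal case is handled analogously via \autoref{cor: existence_terminalisation}. Only afterwards is geometric canonicity of $(X_{\overline K},\Delta_{\overline K})$ deduced, by the Frobenius base-change argument of \autoref{cor: def_canonical}, which your proposal also omits.
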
 

\begin{remark}
In characteristic 0, the analogue of \autoref{thm: def_canonical} was settled by de Fernex and Hacon under the additional assumption that $\lfloor \Delta_k \rfloor =0$; see \cite{dFH11}*{Proposition 3.5.(b)}.
The methods of \autoref{thm: def_canonical} also apply to study the case of deformations of canonical singularities that are not klt.
For the sake of completeness, we include this treatment as \autoref{theorem: char_0}.
\end{remark}

As applications of our results, we discuss geometric and arithmetic deformations of the outcomes of the 3-dimensional MMP.
In the case of Fano varieties, we show the following (cf.~\cite{dFH11}*{Proposition 3.8}):

\begin{corollary}[cf.~\autoref{cor: def_Fano}]
Let $S$ be an excellent Dedekind domain whose closed points have perfect residue field $p>5$ and let $X \to \Spec(S)$ be a projective flat morphism. 
If a closed fibre $X_s$ is a terminal weak Fano 3-fold, there exists an open set $U \subset \Spec(S)$ such that $X_u$ is a weak Fano with terminal singularities for all $u \in U$.
\end{corollary}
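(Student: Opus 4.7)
The plan is to reduce to \autoref{thm: def_canonical} by localising at $s$, and then to spread the resulting conclusions to a Zariski neighbourhood of $s$ in $\Spec S$. Set $R = \mathcal{O}_{\Spec S, s}$, an excellent DVR with perfect residue field of characteristic $p>5$. The base change $X_R \to \Spec R$ is a family of $3$-dimensional couples with terminal special fibre $X_s$, so \autoref{thm: def_canonical} yields that $X_R$ is terminal, that the generic fibre $X_K$ is geometrically terminal, and that the Cartier index of $K_{X_R}$ equals the Cartier index $I$ of $K_{X_s}$. Since $X\to \Spec S$ is of finite type, standard spreading-out arguments promote these conclusions from $R$ to a Zariski open neighbourhood $U_0 \subset \Spec S$ of $s$ over which $X$ is terminal and $IK_X$ is Cartier.

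To propagate the weak Fano property from $X_s$ to nearby fibres, observe that $-K_{X_s}$ is semi-ample: by the base-point-free theorem for $3$-dimensional terminal pairs in characteristic $p>5$, available from the MMP papers \cite{HX15,Bir16,HW19,HW20,7authors} cited in the introduction, any nef and big $\mathbb{Q}$-Cartier divisor on a terminal $3$-fold is semi-ample. Cohomology and base change applied to $\mathcal{O}_X(-mIK_X)$ for $m$ sufficiently divisible then shows that $-IK_X$ is relatively semi-ample over a possibly smaller open $U_1 \subset U_0$, so $-K_{X_u}$ is semi-ample, hence nef, for each $u\in U_1$; lower semi-continuity of $h^0(X_u, \mathcal{O}_{X_u}(-mIK_{X_u}))$ combined with bigness of $-K_{X_s}$ then gives bigness of $-K_{X_u}$ as well.

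The main technical obstacle is to promote the terminal property from the total space $X$ to all fibres $X_u$ for $u$ in some open $U\subset U_1$. The generic fibre $X_K$ is already terminal by the first step, and $X_s$ is terminal by hypothesis. For a closed $u\in U_1$ distinct from $s$, the fibre $X_u$ is a Cartier prime divisor on the terminal $4$-fold $X$ by flatness over the regular $1$-dimensional base, so $K_X+X_u$ is $\mathbb{Q}$-Cartier and adjunction takes the clean form $K_{X_u}=(K_X+X_u)|_{X_u}$, using that terminal $3$-folds in characteristic $p>5$ are Gorenstein in codimension $2$ (following the theme of \cite{BK20,Elk81}) so that the different vanishes. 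A discrepancy comparison using the log resolutions provided by \autoref{hyp} then reduces terminality of $X_u$ to the behaviour of the minimal log discrepancy in the family $X\to U_1$, and the hard part is to establish the appropriate semi-continuity in our mixed-characteristic setting. I expect this to follow from the Cohen--Macaulay control on terminal $3$-folds established in \cite{BK20} together with the inversion of adjunction recorded in \autoref{t-inv-adjunction}, after which intersecting with the open set from the previous paragraph yields the desired $U$.
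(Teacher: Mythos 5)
Your first step (localise at $s$ and apply \autoref{thm: def_canonical}, or equivalently \autoref{cor: lifting_Fano}) matches the paper, and your treatment of the weak Fano condition via relative semi-ampleness of $-K_X$ and semicontinuity of sections is essentially the paper's route (it invokes \cite{DW22} and \cite{Wit21} inside \autoref{cor: lifting_Fano}). But the proposal has a genuine gap at precisely the step you yourself flag as ``the main technical obstacle'': you never actually prove that the nearby closed fibres $X_u$ are terminal. Writing ``I expect this to follow from'' semi-continuity of minimal log discrepancies is not an argument --- semi-continuity of mld's is a hard open conjecture and is not available here --- and the two tools you cite cannot supply the step. Inversion of adjunction (\autoref{t-inv-adjunction}) goes from the fibre to the total space, not the other way; and terminality of the total space $X$ together with adjunction $K_{X_u}=(K_X+X_u)|_{X_u}$ does not imply terminality of $X_u$ (a smooth total space can contain a fibre with canonical but non-terminal singularities, e.g.\ a degeneration of smooth surfaces to a Du Val surface).

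The paper closes this gap by a different and much more elementary mechanism, \autoref{lem: spread_out}: since the \emph{geometric generic fibre} $X_{\overline{K}}$ is terminal (this is the output of \autoref{thm: def_canonical} you already have), one takes a log resolution of the generic fibre --- in residue characteristic $p$, first passing to a finite purely inseparable extension $K^{1/p^e}$ so that the resolution is geometrically regular --- spreads it out to a simultaneous log resolution over a Zariski open subset of $\Spec(S)$, and reads off the discrepancies of each fibre $(X_u,\Delta_u)$ on the restricted resolution $Y_u\to X_u$. Because the exceptional divisors with non-positive discrepancy are vertical over the base when the generic fibre is terminal, they disappear after shrinking, and every nearby fibre is terminal. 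This generic-to-special spreading of a resolution, rather than any total-space-to-fibre or semi-continuity statement, is the missing idea in your write-up.
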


We prove similar statements for minimal models. 
As an interesting application, we verify the abundance conjecture for 3-dimensional minimal models which lift to characteristic 0.

\begin{corollary}[cf.~\autoref{cor: lift_MM_char0}]
    Let $X$ be a terminal 3-dimensional minimal model defined over a perfect field $k$ of characteristic $p>5$.
    If $X$ lifts to a mixed characteristic domain, then $X$ is a good minimal model and so is its lift.
    In particular, if $X$ is a terminal $K$-trivial 3-fold, then so is its lift.
\end{corollary}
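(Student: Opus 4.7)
The plan is to apply \autoref{thm: def_canonical} to the lift $\pi \colon \mathcal{X} \to \Spec(R)$, where $R$ is a mixed-characteristic DVR with residue field $k$ and fraction field $K$, and then reduce to classical abundance for threefolds in characteristic zero. Taking $\Delta = 0$, the theorem gives that $\mathcal{X}$ is terminal, that the generic fibre $X_K$ is geometrically terminal, and that $K_{\mathcal{X}}$ is $\mathbb{Q}$-Cartier with the same Cartier index as $K_X$. In particular I fix $m > 0$ such that $mK_{\mathcal{X}}$, $mK_X$, and $mK_{X_K}$ are simultaneously Cartier.

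The first step is to check that $K_{X_K}$ is nef. Given any curve $C \subset X_K$, its scheme-theoretic closure $\overline{C} \subset \mathcal{X}$ is flat of relative dimension $1$ over $R$, so $K_{\mathcal{X}} \cdot \overline{C}$ can be computed on either fibre; on the central one, $\overline{C}_k$ is an effective $1$-cycle on $X_k$, yielding $K_{X_K} \cdot C = K_{X_k} \cdot \overline{C}_k \geq 0$ by nefness of $K_X$. Hence $X_K$ is a terminal $3$-fold minimal model over a field of characteristic zero, and the classical Miyaoka--Kawamata abundance theorem gives that $K_{X_K}$ is semiample, i.e., the lift is a good minimal model.

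To transfer information between fibres I would exploit the fact that the coherent sheaf $\pi_* \mathcal{O}_{\mathcal{X}}(mK_{\mathcal{X}})$ on $\Spec(R)$ is torsion-free, hence free. For the ``in particular'' assertion, the hypothesis $mK_X \sim 0$ produces a section of $mK_{X_k}$ which by freeness of the pushforward lifts to $mK_{\mathcal{X}}$ and restricts to a nonzero section of $mK_{X_K}$; since $K_{X_K}$ is numerically trivial (intersection numbers being constant in the flat family), the corresponding effective divisor must vanish, so $mK_{X_K} \sim 0$. For the statement that $X$ is itself a good minimal model, if $K_X$ is numerically trivial then the symmetric argument via upper semicontinuity of $h^0$ applied to the line bundle $mK_{\mathcal{X}}$ (using characteristic-zero abundance on $X_K$) similarly forces $mK_X \sim 0$; when $K_X$ has positive numerical dimension one invokes the existing abundance results for terminal $3$-fold minimal models in characteristic $p > 5$.

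The main obstacle is this last step: upper semicontinuity merely lower-bounds $h^0(X_k, mK_{X_k})$ in terms of $h^0(X_K, mK_{X_K})$ and does not control the base locus. A self-contained argument would instead establish $\pi$-semiampleness of $K_{\mathcal{X}}$ and restrict to $X_k$, which requires a mixed-characteristic base-point-free theorem for the $4$-fold $\mathcal{X}$ (available through \cite{HW20}) together with the cohomology-and-base-change compatibility furnished by the Kawamata--Viehweg-type vanishings of \cite{ABL20} and \cite{BK20} that already underlie \autoref{thm: def_canonical}.
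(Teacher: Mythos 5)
Your treatment of the lift is essentially the paper's: \autoref{thm: def_canonical} makes $K_{\mathcal X}$ $\mathbb Q$-Cartier with terminal total space, nefness of $K_{X_K}$ follows by specialising curves to the closed fibre, and characteristic-zero abundance gives semi-ampleness of $K_{X_K}$. The genuine gap is in the half that is actually the point of the statement, namely that $X=X_k$ itself is a good minimal model. You propose to ``invoke the existing abundance results for terminal $3$-fold minimal models in characteristic $p>5$'' when $\nu(K_X)>0$; but abundance is exactly what is being proved here, and it is open in characteristic $p>5$ when $\nu(K_X)=1$ --- this is why \autoref{cor: lift_MM_charp} excludes that case, and why the corollary is advertised as verifying abundance for liftable minimal models. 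Your fallback, proving $\pi$-semi-ampleness of $K_{\mathcal X}$ via a mixed-characteristic base-point-free theorem, also fails: such theorems require bigness of $K_{\mathcal X}$ (i.e.\ $\nu=3$), which is unavailable in precisely the cases at issue.

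The missing idea (see the proof of \autoref{cor: lift_MM_char0}) is that for a klt $3$-fold minimal model over a perfect field of characteristic $p>5$ the canonical ring is finitely generated by \cite{WalFG}*{Theorem 1.2}, so by \cite{MR97}*{Corollary 1} semi-ampleness of the nef divisor $K_{X_k}$ reduces to the purely numerical equality $\kappa(K_{X_k})=\nu(K_{X_k})$. That equality is what the family hands you for free: $\nu(K_{X_k})=\nu(K_{X_K})$ since intersection numbers are constant in a flat family, $\kappa(K_{X_k})\ge\kappa(K_{X_K})$ by upper semi-continuity of $h^0$, and $\kappa(K_{X_K})=\nu(K_{X_K})$ by abundance in characteristic zero, whence $\kappa(K_{X_k})\ge\nu(K_{X_k})\ge\kappa(K_{X_k})$. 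So there is no need to control the base locus at all; your worry that semicontinuity ``does not control the base locus'' is exactly the obstruction that finite generation removes. A separate, smaller error: in the $K$-trivial case, freeness of $\pi_*\mathcal O_{\mathcal X}(mK_{\mathcal X})$ over $R$ does \emph{not} make $H^0(\mathcal X,mK_{\mathcal X})\to H^0(X_k,mK_{X_k})$ surjective (semicontinuity goes the wrong way for lifting sections off the closed fibre); that step is, however, unnecessary, since a semi-ample numerically trivial $K_{X_K}$ is automatically torsion.
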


It is unclear whether \autoref{thm: def_canonical} still holds for deformations of 3-folds if we have $\textup{char}(k) \in \{2,3,5\}$.
However, we show that the analogue of \autoref{thm: def_canonical} does not extend to families of 4-folds in characteristic 2.
In particular, we show that there exist families of canonical 4-folds in equi-characteristic 2 where the Cartier index of the canonical divisor jumps in family.
In this case, the central fibre is not $(S_3)$; see \autoref{s-example}.

\begin{theorem}[see \autoref{jump:perfect}] \label{t-example}
	Let $k$ be an algebraically closed field of characteristic $p=2$. Then there exists a family of 4-folds $X \to \mathbb{A}^1_k$ such that the Cartier index of $K_{X_0}$ is 1 and the Cartier index of $K_{X}$ is 2. 
\end{theorem}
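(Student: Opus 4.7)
The proof proceeds by explicit construction. The guiding principle is that in characteristic $0$, or more generally whenever the central fibre of a $\mathbb{Q}$-Gorenstein family satisfies Serre's condition $(S_3)$, Grothendieck's theorem on extension of Weil divisors \cite{Gro68}*{XI.2.2} forces the Cartier index of the total space to coincide with that of the central fibre (as recalled in the introduction via \cite{Elk81}). Hence any example witnessing the jumping phenomenon must be built around a canonical $4$-fold in characteristic $2$ which fails $(S_3)$ -- a pathology with no analogue in characteristic $0$.

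My plan is therefore to first produce a Gorenstein canonical $4$-fold germ $(X_0,0)$ over an algebraically closed field $k$ of characteristic $2$ that fails $(S_3)$ at the origin, and then realise it as the central fibre of a flat family $\pi\colon X\to\mathbb{A}^1_k$. Natural candidates for such germs arise as quotients of smooth $5$-folds by wild or infinitesimal group scheme actions (by $\mathbb{Z}/2\mathbb{Z}$, $\mu_2$ or $\alpha_2$), as inseparable covers of smooth varieties, or as explicit hypersurfaces whose defining equations use squares of coordinates in an essential way. For the chosen $(X_0,0)$ I would verify normality, the Gorenstein property (so that $K_{X_0}$ is Cartier, of index $1$), canonicity via an explicit log resolution or a direct discrepancy computation, and the failure of $(S_3)$ by exhibiting a non-zero element of $H^2_{\mathfrak{m}_0}(\mathcal{O}_{X_0,0})$, equivalently $\depth_{\mathfrak{m}_0}\mathcal{O}_{X_0,0}=2$.

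Once such a germ is found, I would deform its defining equations, or the action used to construct it, in a one-parameter family, arranging that the resulting $5$-fold $X$ is normal and that $\pi$ is flat with smooth generic fibre. Most natural constructions of this kind come equipped with a degree-$2$ cover, which automatically makes $2K_X$ Cartier. The content of the theorem is then that $K_X$ itself is not Cartier: this should be detected by the obstruction to extending a local trivialisation of $K_X$ from the smooth locus of $X$ across the singular locus supported on the central fibre, an obstruction exactly controlled by the non-vanishing local cohomology of $\mathcal{O}_{X_0}$ established above. The $\mathbb{Q}$-Cartier-ness of $K_X$ with index $2$, combined with this non-extension statement, gives the jumping behaviour.

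The main difficulty lies in producing a single construction where all of these features coexist: Gorensteinness of $X_0$, canonicity of both $X_0$ and $X$, flatness of $\pi$ with smooth generic fibre, Cartier index $2$ on the total space, and the non-$(S_3)$ property. The local cohomology and class group computations in characteristic $2$ are delicate, and they are precisely what separates this pathology from the situation governed by Elkik's theorem. One would naturally expect such an example to be specific to $p\in\{2,3,5\}$, consistent with the running hypothesis $p>5$ of \autoref{thm: def_canonical} and related statements.
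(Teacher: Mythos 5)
Your proposal is a strategy outline rather than a proof: the decisive step---actually exhibiting a canonical $4$-fold in characteristic $2$ with Cartier canonical class, failing $(S_3)$, sitting as the central fibre of a flat family whose total space has index $2$---is deferred to a list of ``natural candidates'' (wild or infinitesimal quotients, inseparable covers, special hypersurfaces), none of which is carried out. Since the entire content of the theorem is the existence of such an example, this is a genuine gap, not a routine verification. There is also an internal inconsistency in your plan: you ask for $(X_0,0)$ to be \emph{Gorenstein} and simultaneously to fail $(S_3)$ with $\depth_{\mathfrak{m}_0}\mathcal{O}_{X_0,0}=2$. With the paper's definition (Gorenstein $=$ Cohen--Macaulay with invertible dualising sheaf) these requirements are mutually exclusive; what you actually need, and what the paper arranges, is that $K_{X_0}$ is Cartier while $X_0$ is \emph{not} Cohen--Macaulay, and the paper explicitly warns that $X_0$ is not Gorenstein for precisely this reason.

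For comparison, the paper's construction is of a rather different flavour from anything on your candidate list. It starts from Schr\"oer's del Pezzo surface $S$ with canonical singularities over the imperfect field $k(t)$ in characteristic $2$, whose key pathology is irregularity: $\Pic^0_{S/K}\simeq\mathbb{G}_{a,K}$, so $h^1(S,\mathcal{O}_S)=1$ and $\Pic^0$ contains nontrivial $2$-torsion line bundles. In \autoref{jump:imperfect} one takes the relative cone over $\mathbb{G}_{a,K}$ with respect to the polarisations $\omega_S^{\vee}\otimes\mathcal{L}_u$, where $\mathcal{L}$ is the Poincar\'e bundle: at $u=0$ the polarisation is $\omega_S^{\vee}$, so $K_{X_0}$ is Cartier, while for $u\neq 0$ the $2$-torsion twist forces index $2$; the failure of $(S_3)$ at the vertex is exactly the non-vanishing of $h^1(S,\mathcal{O}_S)$, via \cite{kk-singbook}*{Corollary 3.11}. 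The jump is thus produced not by deforming equations of a fixed germ but by moving the polarisation inside $\Pic^0$, and the passage to an algebraically closed base field in \autoref{jump:perfect} is a spreading-out of $t$ into an honest coordinate, which raises the fibre dimension from $3$ to $4$. Your guiding principle (the example must violate $(S_3)$, by Grothendieck--Elkik) is correct and is indeed the paper's organising observation, but without a concrete source of such non-$(S_3)$ canonical singularities together with a mechanism that makes the index jump on nearby fibres, the argument does not get off the ground.
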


We leave open the question of whether small deformations of canonical singularities are still canonical in higher dimensions. 

The organisation of the paper is as follows.
In \autoref{s-preliminaries}, we establish basic notations and we include a proof of deformation stability of canonical and terminal surface singularities, including the case of imperfect residue field.
In \autoref{s-MMP-input}, we collect the results on the MMP for semi-stable 4-folds over a DVR in order to establish \autoref{t-inv-adjunction}.
In \autoref{ss-deformations-canonical}, we use the result of \cite{BK20}
together with the semi-stable MMP to deduce the stability of terminal and canonical 3-dimensional singularities under deformations. 
We then present the applications to families of weak Fano 3-folds and minimal models over positive and mixed characteristics DVR and Dedekind domains in \autoref{ss-deformations-Fano}.
In \autoref{s-example}, we use the irregular del Pezzo surface with canonical singularities over an imperfect field of characteristic $p=2$ of Schr\"{o}er in \cite{Sch07} and a spreading out argument to show \autoref{t-example}.

\subsection*{Acknowledgements}
We would like to thank C.D. Hacon and J. Witaszek for feedback on this work, and the anonymous referee for useful comments and suggestions that improved the clarity of this work.

\section{Preliminaries} \label{s-preliminaries}

\subsection{Notation}

\begin{enumerate}
    \item Throughout the article, $p>0$ denotes a fixed prime number.
    \item All rings $S$ are assumed to be Noetherian, excellent, of finite Krull dimension, and admitting a dualising complex $\omega_{S}^{\bullet}$ normalised as in \cite{7authors}*{Section 2.1}. 
    We require our rings to be excellent, as this notion is well-behaved with respect to localisation, completion, regularity, and it is the natural class for the problem of resolution of singularities. For an overview of the importance of the excellence condition, we refer to \cite{Ray14}.
    \item Given a field $k$, we denote by $\overline{k}$ (resp.~ $k^{\sep}$) an algebraic closure (resp. separable closure). 
    We denote by $k^{1/p^\infty}$ the perfect closure of $k$.
    \item We denote by $(A, \mathfrak{m},\mathfrak{k})$ a local ring $A$ with maximal ideal $\mathfrak{m}$, and residue field $\mathfrak{k}$. Throughout the article, we suppose $\characteristic(\mathfrak{k})=p>0$. 
    \item If $R$ is a DVR, we denote by $K$ its fraction field and by $k$ its residue field.
    If $f \colon X \to \Spec(R)$ is a flat morphism, we denote the special (resp. generic) fibre by $X_k \coloneqq X \times_{\Spec(R)} \Spec(k)$ (resp. $X_K \coloneqq X \times_{\Spec(R)} \Spec(K)$).
    \item We say $A$ is \emph{Gorenstein} if it is Cohen--Macaulay and the dualising sheaf $\omega_A \coloneqq H^{-\dim(A)}(\omega_A^{\bullet})$ is an invertible sheaf. 
    Note that for integrally closed domains of dimension $\geq 3$, the Gorenstein condition is stronger than just requiring the canonical divisor to be Cartier.

    \item  We say $(X, \Delta)$ is a \emph{couple} if $X$ is an equidimensional normal Noetherian excellent integral scheme admitting a dualizing complex, and $\Delta$ is an effective $\mathbb{R}$-divisor with coefficients in $[0, 1]$.
    We say a couple $(X, \Delta)$ is a \emph{pair} if $K_X+\Delta$ is $\mathbb{R}$-Cartier.
    
    \item Given a normal scheme $X$ and a $\mathbb{Q}$-Cartier $\mathbb{Q}$-divisor $D$ on $X$, the \emph{Cartier index} of $D$ is the smallest integer $n \in \mathbb{Z}_{>0}$ such that $nD$ is an integral Cartier divisor.
    If $D$ is a $\mathbb{Q}$-divisor, the \emph{Weil index} of $D$ is the smallest integer $n \in \mathbb{Z}_{>0}$ such that $nD$ is an integral divisor.
    \item Given an $\mathbb{R}$-Cartier divisor $L$ on a normal projective variety $X$ over a field, we denote by $\kappa(L)$ its Kodaira dimension. If $L$ is nef, we denote by $\nu(L)$ its numerical dimension.

    \item Let $\pi\colon Y\to X$ be a birational morphism between integral normal schemes. We denote by $\Ex(\pi)$ the \textit{exceptional locus of} $\pi$, i.e. the locus of points $x\in X$ such that $\pi$ is not an isomorphism in a neighborhood of $x$.
\end{enumerate}

\subsection{Singularities, models, and families}

Given a pair $(X,\Delta)$ and a proper birational morphism $\pi \colon Y \to (X,\Delta)$ we can write
$$K_Y+\pi_*^{-1}\Delta=\pi^*(K_X+\Delta) + \sum_i a(E_i, X, \Delta)E_i, $$
for some $a(E_i, X, \Delta) \in \mathbb{R}$, where $E_i$ runs through the prime components of the $\pi$-exceptional divisors.
The number $a({E_i},X, \Delta)$ is called the \emph{discrepancy} of $E_i$ with respect to $(X,\Delta)$ and it does not depend on the model $Y$ on which $E_i$ appears.
For a thorough discussion of the singularities of the MMP for excellent rings, we refer to \cite{kk-singbook} and \cite{7authors}.

\begin{definition} \label{def: canonical_terminal_sing}
A pair $(X,\Delta)$ is said to be \emph{canonical} (resp. \emph{terminal}) if for all proper birational morphism $\pi \colon Y \to X$ where $Y$ is normal and every $\pi$-exceptional divisor $E$, we have $a(E, X, \Delta) \geq 0$ (resp. $a(E, X, \Delta) > 0$). 

If $X$ is a defined over a field $k$, we say $(X, \Delta)$ is \emph{geometrically canonical} (resp. terminal) over $k$ if $(X_{\overline{k}}, \Delta_{\overline{k}})$ is canonical (resp. terminal).
\end{definition}

\begin{remark}
If $(X,\Delta)$ is canonical, then it is plt. In particular, if $(X,\Delta)$ is canonical then it is klt if and only if $\lfloor{\Delta \rfloor}=0$.
On the other hand, if $(X,\Delta)$ is terminal, it is necessarily klt.
\end{remark}

We recall the definitions of the birational modifications we will need in this article.

\begin{definition}\label{def: dlt-modification}
    Let $(X,\Delta)$ be a couple.
    We say $\pi \colon Y \to (X,\Delta)$ is a \emph{dlt modification} if $\pi$ is a projective birational morphism, $(Y, \pi_*^{-1}\Delta+\sum_i E_i)$ is $\mathbb Q$-factorial dlt and $K_Y+\pi_*^{-1}\Delta+\sum_i E_i$ is $\pi$-nef, where the $E_i$ are the irreducible components of the divisorial part of $\Ex(\pi)$.
\end{definition}

\begin{remark} \label{rmk: inv_of_adj}
    In the context of \autoref{def: dlt-modification}, further assume that $K_X+\Delta$ is $\mathbb \R$-Cartier and write $K_Y+\Delta_Y=\pi^*(K_X+\Delta)$.
    Then, for every $x \in X$, either $\pi^{-1}(x)$ is contained in $\Supp(\pi_*^{-1}\Delta+\sum_i E_i-\Delta_Y)$ or it is disjoint from it; see, e.g., \cite{7authors}*{proof of Corollary 9.21}.
\end{remark}

\begin{definition}\label{def:terminalisation}
    Let $(X,\Delta)$ be a klt pair. 
    We say $\pi \colon Y \to X$ is a \emph{terminalisation} if $(Y,\Delta_Y)$ is terminal pair, where $K_Y+\Delta_Y=\pi^*(K_X+\Delta)$.
\end{definition}

We introduce the notion of families of couples over a 1-dimensional regular base we use in this work.

\begin{definition} \label{def: family_pairs}
Let $C$ be a regular excellent 1-dimensional scheme and $n>0$.  
We say $f \colon (X, \Delta) \to C$ is a \emph{family of $n$-dimensional couples} if
\begin{enumerate}
    \item $(X,\Delta)$ is a couple;
    \item $f \colon X \to C$ is a flat closed morphism of relative dimension $n$ and essentially of finite type;
    \item for each irreducible component $B$ of $\Supp(\Delta)$, the restriction $f|_B \colon B \to C$ is flat;
    \item for every $c \in C$, the base change $(X_c,\Delta_c)$ is a couple of dimension $n$ where $\Delta_c$ is the divisorial restriction of $\Delta$ as in \cite{k-moduli}*{Section 2.1}.
\end{enumerate}
If $\Delta =0$, we say that $f \colon X \rightarrow C$ is a {\it family of $n$-dimensional varieties}. When $C=\Spec(R)$, where $R$ is a DVR with residue field $k$, we will sometimes refer to $f \colon (X, \Delta) \to C$ as a \textit{small deformation of $(X_k,\Delta_k)$}.
\end{definition}


\begin{remark}
    Note that if $f \colon (X, \Delta) \to C$ is a family of couples with geometrically reduced fibres, then it is also a \textit{family of pairs} as defined in \cite{k-moduli}*{Definition 2.2}. 
    We chose to distinguish between the notions of couple and pair, since the latter requires the log canonical divisor of the total space to be $\bR$-Cartier, while an important step in the proof of \autoref{thm: def_canonical} is showing that $(X_K,\Delta_K)$ and $(X,\Delta)$ are actual pairs.
\end{remark}

\begin{remark} \label{rem: normality_family}
    As  $(X_k, \Delta_k)$ is a couple, by definition $X_k$ is normal. 
    As $X_k$ is a Cartier divisor, $X$ has property $(R_1)$ by \cite{stacks-project}*{\href{https://stacks.math.columbia.edu/tag/00NU}{Tag 00NU}} and $(S_3)$ by \cite{kk-singbook}*{Corollary 2.61}.
\end{remark}

\begin{remark}
    Recall that even if all fibres are pairs, it is not necessarily true that $K_X+\Delta$ is $\mathbb{R}$-Cartier. See \cite{Ish18}*{Example 9.1.7}.
\end{remark}

We will use the following lemma several times, see also \cite{k-moduli}*{Proposition 2.15}.

\begin{lemma}\label{lem: lemma_trick}
Let $R$ be an excellent DVR, and let $f \colon (X,\Delta) \rightarrow \Spec(R)$ be a family of couples such that $(X,X_k+\Delta)$ is a log canonical pair.
Then, if $E$ is an exceptional divisorial valuation over $X$ such that $\cent_X(E)\subset X_k$, then $a(E,X,\Delta) \geq 0$.
Furthermore, if $(X,X_k+\Delta)$ is plt, we have $a(E,X,\Delta) > 0$.
\end{lemma}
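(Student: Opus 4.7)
The plan is to relate the discrepancy of $E$ with respect to $(X,\Delta)$ to the discrepancy with respect to $(X,X_k+\Delta)$, exploiting the fact that $X_k$ is a Cartier divisor on $X$. Concretely, since $f\colon X\to\Spec(R)$ is flat and $\Spec(R)$ is regular of dimension one, the closed fibre $X_k$ is cut out by a uniformiser $t\in R$, so $X_k$ is a principal Cartier divisor on $X$.

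Pick a normal proper birational model $\pi\colon Y\to X$ on which $E$ appears as a divisor, and denote by $\{E_i\}$ the $\pi$-exceptional divisors (with $E=E_{i_0}$ for some index). Since $X_k$ is Cartier, we can write
\[
\pi^*X_k \;=\; \pi_*^{-1}X_k + \sum_i \ord_{E_i}(X_k)\,E_i.
\]
Inserting this into $\pi^*(K_X+X_k+\Delta)=\pi^*(K_X+\Delta)+\pi^*X_k$ and matching coefficients of $E_i$ in the discrepancy expansion, one obtains the comparison
\[
a(E,X,\Delta) \;=\; a(E,X,X_k+\Delta) + \ord_E(X_k).
\]

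Now, the hypothesis $\cent_X(E)\subset X_k$ means that the generic point of $\cent_X(E)$ belongs to the vanishing locus of $t$; equivalently, the image of $t$ in the valuation ring of $E$ lies in the maximal ideal, so $\ord_E(X_k)=\ord_E(t)\geq 1$. Combining with the comparison formula, if $(X,X_k+\Delta)$ is log canonical we have $a(E,X,X_k+\Delta)\geq -1$ and hence $a(E,X,\Delta)\geq \ord_E(X_k)-1\geq 0$. If moreover $(X,X_k+\Delta)$ is plt, then since $E$ is exceptional we have the strict inequality $a(E,X,X_k+\Delta)>-1$, which yields $a(E,X,\Delta)>0$.

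I do not expect serious obstacles: the only point requiring care is that $X_k$ is genuinely Cartier (not merely $\mathbb{Q}$-Cartier) so that the pullback formula for $\pi^*X_k$ is literally correct and $\ord_E(X_k)$ is an integer, but this is immediate from flatness over the DVR $R$.
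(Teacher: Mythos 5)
Your proof is correct and follows essentially the same route as the paper: both use that $X_k$ is Cartier to get the integer coefficient $\ord_E(X_k)=m_E\geq 1$ from $\cent_X(E)\subset X_k$, and then compare discrepancies via $a(E,X,\Delta)=a(E,X,X_k+\Delta)+m_E$. The only cosmetic difference is in the plt case, where you invoke the definition of plt directly for the exceptional divisor $E$, while the paper phrases the same point by noting that $X_k$ is the unique log canonical place of $(X,X_k+\Delta)$.
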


\begin{proof}
Let $\pi \colon Y \rightarrow X$ be a proper birational morphism from a normal variety $Y$ such that $E$ is a divisor on $Y$.
Let $m_E$ denote the coefficient of $E$ in $\pi^*(X_k)$.
Since $X_k$ is a Cartier divisor and $\cent_X(E) \subset X_k$, $m_E$ is a positive integer.
Since $(X,X_k+\Delta)$ is log canonical, we obtain
\[
-1 \leq a(E,X,X_k+\Delta) = a(E,X,\Delta)-m_E,
\]
and therefore we have
\[
a(E,X,\Delta) \geq m_E-1 \geq 0.
\]
We observe that the equality case $a(E,X,\Delta)=0$ forces $E$ to be a log canonical place of $(X,X_k+\Delta)$.
Thus, if we further assume that $(X,X_k+\Delta)$ is plt, as $X_k$ is the only log canonical place of $(X,X_k+\Delta)$, we obtain the strict inequality $a(E,X,\Delta) > 0$.
\end{proof}

We recall the definition of Fano varieties and minimal models.

\begin{definition}\label{def: Fano-minimal-models}
    Let $(X,\Delta)$ be a klt pair, projective over a base $R$.
    If $-(K_X+\Delta)$ is big and nef, we say $(X,\Delta)$ is a \textit{weak log Fano} pair.
    If $\Delta=0$, we say it is \textit{weak Fano}.
    
    If $K_X+\Delta$ is nef, we say $(X,\Delta)$ is a \textit{minimal model}.
    Furthermore, if $K_X+\Delta$ is semi-ample, we say it is a {\it good minimal model}.
\end{definition}

\subsection{Deformations of canonical surface singularities}

We prove that small deformations of terminal and canonical singularities for surfaces stay in the same class. 
This is well-known in the case of a perfect residue field but for sake of completeness, we include a version including the imperfect case.

We recall the definition of \emph{multiplicity} for an effective divisor on a regular scheme $X$.

\begin{definition}[{\cite{Fulton_IT}*{Section 4.3}}]
    Let $(X,x)$ be the spectrum of a regular local ring $(A, \mathfrak{m},\mathfrak{k})$ of dimension $d$, and let $D$ be a Cartier divisor with defining equation $f\in A$. 
    Let $\pi\colon Y \to X$ be the blow-up of the maximal ideal $\mathfrak{m}$ with exceptional divisor $E\simeq\mathbb{P}^{d-1}_{\mathfrak{k}}$. The \textit{multiplicity of $D$ at $x$} is denoted by $\mult_x(D)$ and defined, equivalently, as:
    \begin{itemize}
        \item the largest integer $\mu\geq 0$ such that $f\in\mathfrak{m}^\mu$; or
        \item the coefficient of $E$ in $\pi^*D-\pi_*^{-1}D$.
    \end{itemize}
    If $D$ is an $\mathbb R$-Cartier divisor and $D=\sum a_i D_i$ where each $D_i$ is a Cartier divisor and , we define $\mult_x(D)\coloneqq \sum a_i \mult_x(D_i)$.
\end{definition}

We recall the following well-known fact on the behaviour of multiplicities in families.

\begin{lemma}\label{lem: multiplicity}
Let $R$ be an excellent DVR and let $X \to \Spec(R)$ be a flat morphism such that $X_k$ is regular.
If $D$ is a $\mathbb{R}$-Cartier divisor on $X$, then $\mult_x(D) \leq \mult_x(D|_{X_k})$ for any point $x \in X_k$.
\end{lemma}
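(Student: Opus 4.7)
The plan is to reduce to a single prime Cartier divisor and compare $\mathfrak{m}$-adic orders of a local equation in the regular local rings $\mathcal{O}_{X,x}$ and $\mathcal{O}_{X_k,x}$.

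First, I would verify that $X$ is regular at $x$. Since $X \to \Spec(R)$ is flat and $R$ is a DVR with uniformizer $t$, the element $t$ is a nonzerodivisor in $A\coloneqq\mathcal{O}_{X,x}$ and $A/(t)=\mathcal{O}_{X_k,x}$ is regular by hypothesis. Lifting a regular system of parameters of $A/(t)$ to $A$ and adjoining $t$ yields a regular system of parameters of $A$, so $A$ is regular and in particular a UFD. Hence every prime divisor on $X$ is locally principal near $x$, and for a Cartier divisor $D$ with local equation $f\in A$ the multiplicity $\mult_x(D)$ is just the largest $\mu$ with $f\in \mathfrak{m}_x^\mu$, i.e.\ the $\mathfrak{m}_x$-adic order $\ord_{\mathfrak{m}_x}(f)$; the analogous statement holds on $X_k$.

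Next, I would decompose $D=\sum_i a_i D_i$ as an $\mathbb{R}$-linear combination of distinct prime divisors (each Cartier near $x$ by the previous step). For $D|_{X_k}$ to be defined in the standard sense, no $D_i$ may be supported on $X_k$, so each local equation $f_i\in A$ has a nonzero image $\bar f_i\in A/(t)$ cutting out $D_i|_{X_k}$ as a Cartier divisor on $X_k$. Both $\mult_x$ and restriction to $X_k$ are $\mathbb{R}$-linear in this decomposition, so, assuming $D$ effective (i.e.\ $a_i\geq 0$, a hypothesis that is in fact necessary: the stated inequality fails in general once negative coefficients are allowed, as one sees by restricting a divisor like $V(x^2+t)$ whose multiplicity jumps from $1$ to $2$ upon restriction), it suffices to prove $\mult_x(D_i)\leq \mult_x(D_i|_{X_k})$ for each $i$ separately.

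Finally, for a prime Cartier $D_i$ with local equation $f_i\in A$, the surjection $A\twoheadrightarrow A/(t)$ sends $\mathfrak{m}_x^\mu$ into $\mathfrak{m}_{X_k,x}^\mu$, so $f_i\in\mathfrak{m}_x^\mu$ implies $\bar f_i\in\mathfrak{m}_{X_k,x}^\mu$; passing to the supremum yields $\ord_{\mathfrak{m}_x}(f_i)\leq \ord_{\mathfrak{m}_{X_k,x}}(\bar f_i)$, which is exactly the required inequality. No individual step is the main obstacle: the argument is essentially formal once the regularity of $A$ at $x$ is in place, and the only point requiring care is the reduction to an effective, componentwise-restrictable decomposition.
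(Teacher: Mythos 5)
Your argument is correct and follows essentially the same route as the paper's proof: reduce to a single (effective) Cartier divisor near $x$, where $X$ is regular, and observe that the surjection $\mathcal{O}_{X,x}\twoheadrightarrow\mathcal{O}_{X_k,x}$ carries $\mathfrak{m}_x^{\mu}$ into $\mathfrak{m}_{X_k,x}^{\mu}$, so the $\mathfrak{m}$-adic order can only increase upon restriction. The one substantive point where you go beyond the paper is your remark on effectivity: the paper simply says that by the definition of multiplicity for an $\mathbb{R}$-Cartier divisor one may assume $D$ is Cartier, but, as you note, summing the componentwise inequalities $\mult_x(D_i)\leq\mult_x(D_i|_{X_k})$ against coefficients $a_i$ only yields the claimed inequality when all $a_i\geq 0$, since the inequality reverses under multiplication by a negative scalar. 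Your example is apt: for $V(x^2+t)\subset\Spec R[x]$ the multiplicity at the origin of the closed fibre jumps from $1$ to $2$, so the stated inequality fails for $-V(x^2+t)$. This is a genuine, if minor, imprecision in the lemma as formulated; it is harmless in its only application (\autoref{prop: def_can_surfaces}), where the divisor in question is the effective boundary $\Delta$, but your explicit flagging of the effectivity hypothesis is the more careful statement.
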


\begin{proof}
    By \cite{stacks-project}*{\href{https://stacks.math.columbia.edu/tag/00NU}{Tag 00NU}}, $X$ is regular in an open neighbourhood of $x$.
    By the definition of multiplicity for an $\mathbb R$-Cartier divisor, we may assume that $D$ is Cartier.
    Without loss of generality, we may assume that $X$ is the spectrum of a regular local $R$-algebra $(A,\mathfrak{m},\mathfrak{k})$, and $D=(f=0)$ is Cartier.
    Denote by $A_k$ the coordinate ring of $X_k$, by $\mathfrak{m}_k=\textup{Im}\left( \mathfrak{m}\otimes_R k\to A\otimes_R k \right)$ its maximal ideal, and by $\mu$ the multiplicity of $D$ at $x$. As $f\in\mathfrak{m}^{\mu}$, then $f\vert_{X_k}\in\mathfrak{m}_k^{\mu}$.
\end{proof}

\begin{proposition}\label{prop: def_can_surfaces}
    Let $R$ be an excellent DVR and let $f \colon (X,\Delta) \to \Spec(R)$ be a family of 2-dimensional couples.
    If $(X_k, \Delta_k)$ is canonical (resp.~ terminal), then so is $(X,\Delta)$.
    In both cases, the total space $X$ has Gorenstein singularities.
    Moreover, if $\Delta$ is a $\mathbb Q$-divisor with Weil index $I_1$, then the Cartier index of $K_X+\Delta$ is $I_1$.
\end{proposition}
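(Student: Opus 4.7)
The plan is to proceed in three stages: first to establish that $X$ is Gorenstein, then to pin down the Cartier index of $K_X+\Delta$, and finally to bound the discrepancies using a combination of inversion of adjunction on the central fibre and a multiplicity comparison on the generic fibre.

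For the Gorenstein step, I would use that a 2-dimensional canonical singularity is Du Val, hence Gorenstein, so $\omega_{X_k}$ is invertible and $X_k$ is Cohen--Macaulay. By \autoref{rem: normality_family}, $X$ is itself CM. Since $X_k \subset X$ is cut out by a uniformiser $t \in R$, CM-adjunction yields $\omega_X/t\omega_X \cong \omega_{X_k}$. The element $t$ is a non-zerodivisor on the maximal CM module $\omega_X$, so $\omega_X$ is $R$-flat, and Nakayama's lemma produces a local generator of $\omega_X$ at each closed point of $X_k$. Writing $\omega_X \cong A/J$ locally, the isomorphism modulo $t$ forces $J \subset tA$; combining this with the $t$-torsion-freeness of $\omega_X = A/J$ gives $J = tJ$, and Nakayama again yields $J = 0$. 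Since every closed point of $X$ lies in $X_k$ (as $X\to \Spec(R)$ is essentially of finite type), $\omega_X$ is invertible globally and $X$ is Gorenstein.

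For the Cartier index claim, the key observation is that $\Supp(\Delta_k)$ is disjoint from $\Sing(X_k)$: if a component of $\Delta_k$ met a Du Val singular point $p \in X_k$, then the crepant exceptional divisor over $p$ on the minimal resolution $\tilde X_k \to X_k$ would acquire a positive coefficient in the pullback of $\Delta_k$, forcing a negative discrepancy and contradicting the canonicity of $(X_k,\Delta_k)$. Thus $\Supp(\Delta_k) \subset X_k^{\sm}$, and by the flat deformation criterion for regularity, $\Supp(\Delta) \subset X^{\sm}$ in a neighbourhood of $X_k$. Since every closed point of $X$ lies in $X_k$, each horizontal prime component of $\Delta$ is Cartier globally, whence $I_1\Delta$ is Cartier. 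Together with the Gorenstein property of $X$, this shows that the Cartier index of $K_X+\Delta$ is exactly $I_1$.

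For the discrepancies, let $E$ be a divisorial valuation over $X$. If $\cent_X(E) \subset X_k$, I would first verify that $(X, X_k+\Delta)$ is log canonical (resp.~plt) by a direct log-resolution and adjunction argument, reducing the check to the canonical (resp.~terminal) hypothesis on $(X_k,\Delta_k)$. Then \autoref{lem: lemma_trick} yields $a(E, X, \Delta) \geq 0$ (resp.~$>0$). If $\cent_X(E) \not\subset X_k$, the center lies in the generic fibre $X_K$ and the discrepancy equals $a(E, X_K, \Delta_K)$; on a log resolution $\pi\colon Y \to X$ of $(X, X_k+\Delta)$ compatible with both fibres, \autoref{lem: multiplicity} bounds the coefficient of $E$ in $\pi^*\Delta$ by that of $E|_{Y_k}$ in $\pi_k^*\Delta_k$, so canonicity (resp.~terminality) on the central fibre transfers to the generic fibre. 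The main obstacle I anticipate is precisely this log canonical/plt check for $(X, X_k+\Delta)$: one needs to exhibit a resolution of the 3-dimensional excellent scheme $X$ (available through Lipman's and Cossart--Piltant's theorems) restricting compatibly to log resolutions of both fibres, and to control the adjunction computation. Over an imperfect residue field, the base-change behaviour of these resolutions must also be handled with care.
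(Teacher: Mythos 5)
Your overall architecture matches the paper's: Gorenstein-ness of $X$ by deforming the dualising sheaf from the Du Val central fibre, regularity of $X$ along $\Supp(\Delta)$, and a split of the divisorial valuations according to whether their centre is vertical (handled by \autoref{lem: lemma_trick}) or horizontal (handled by multiplicity semicontinuity together with the numerical characterisation of canonical surface pairs). The Gorenstein and Cartier-index steps are correct, if more laborious than the paper's direct appeal to the Stacks Project result on Gorenstein-ness in flat families.

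The genuine gap is the step you yourself flag as the ``main obstacle'': proving that $(X, X_k+\Delta)$ is log canonical (resp.\ plt). This is inversion of adjunction, and it does \emph{not} follow from ``a direct log-resolution and adjunction argument''. Adjunction on a log resolution only gives the easy implication (total space plt near $X_k$ implies fibre klt); the converse requires connectedness of the non-klt locus over points of $X$ (Koll\'ar--Shokurov connectedness), which needs a vanishing theorem or MMP input and is exactly the content the paper imports as \cite{k-notQfact}*{Corollary 10}. Having a log resolution of the excellent $3$-fold $X$ (Lipman, Cossart--Piltant) is necessary but nowhere near sufficient. Note that the paper only needs this nontrivial input where $\Delta=0$: where $\Delta\neq 0$, it first shows via \cite{kk-singbook}*{Theorem 2.29} that $X_k$, hence $X$, is regular, which trivialises the plt statement there --- your observation that $\Supp(\Delta_k)$ avoids $\Sing(X_k)$ is the same point. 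Two smaller issues: for an arbitrary valuation $E$ with horizontal centre you cannot bound its discrepancy by a coefficient computation on one fixed log resolution, since $E$ need not appear on it; instead one deduces $\mult_y(\Delta_K)\le 1$ for all $y\in X_K$ from upper semicontinuity of multiplicity and \autoref{lem: multiplicity}, and then invokes the pointwise criterion of \cite{kk-singbook}*{Theorem 2.29}. Finally, the terminal case with $\Delta\neq 0$ needs an extra argument (the paper perturbs to $(1+\varepsilon)\Delta$ and reduces to the canonical case), since the non-strict bound $\mult\le 1$ does not by itself separate terminal from canonical.
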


\begin{proof}
    Without loss of generality, we can localise at a closed point $x \in X_k$.
    We first suppose that $\Delta=0$.
    Since $X_k$ is canonical, then $X_k$ is Gorenstein by \cite{kk-singbook}*{Theorem 2.29} and therefore $X$ is Gorenstein by \cite{stacks-project}*{\href{https://stacks.math.columbia.edu/tag/0BJJ}{Tag 0BJJ}}.
    By \autoref{rem: normality_family}, $X$ is normal.
    By inversion of adjunction \cite{k-notQfact}*{Corollary 10}, the pair $(X, X_k)$ is plt, and thus $X$ is klt. 
    As $X$ is also Gorenstein, we conclude $X$ has canonical singularities.
    If we further assume that $X_k$ is terminal, by \cite{kk-singbook}*{Theorem 2.29} $X_k$ is regular. Thus $X$ is regular by \cite{stacks-project}*{\href{https://stacks.math.columbia.edu/tag/00NU}{Tag 00NU}} and thus it is terminal.

    Suppose now $\Delta \neq 0$. 
    By \cite{kk-singbook}*{Theorem 2.29}, we have that $X_k$ is regular and $\mult_x(\Delta_k)\leq 1$. 
    Hence, by \cite{stacks-project}*{\href{https://stacks.math.columbia.edu/tag/00NU}{Tag 00NU}}, we have that $X$ is also regular.
    Suppose by contradiction there exists an exceptional divisor $E$ for $(X, \Delta)$ such that $a(E,X, \Delta) <0$.  
    By the previous paragraph, we may assume that $\cent_X(E) \subset \Supp(\Delta)$ and by \autoref{lem: lemma_trick}, $\cent_X(E)$ dominates $\Spec(R)$.
    In particular, we have that $a(E, X, \Delta)=a(E_K, X_K, \Delta_K)$ and thus it is sufficient to show that $(X_K, \Delta_K)$ is canonical.
    As $y \mapsto \mult_y(\Delta)$ is upper semi-continuous, we know that $\mult_{y}(\Delta) \leq \mult_x(\Delta)$ for every point $y$ such that $x$ belongs to the closure $\overline{y}$. 
    Thus, by \autoref{lem: multiplicity}, we deduce that $\mult_y(\Delta_K) \leq 1$ for every $y \in X_K$ and hence $(X_K, \Delta_K)$ is canonical by \cite{kk-singbook}*{Theorem 2.29}.
    
    We now show that if $(X_k, \Delta_k)$ is terminal, then so is $(X, \Delta)$.
    Assume by contradiction that this is not the case and let $E$ be an exceptional divisorial valuation such that $a(E, X, \Delta) \leq 0$.
    For every $0<\varepsilon \ll 1$, as $(X_k, (1+\varepsilon) \Delta_k)$ is terminal, then the pair $(X, (1+\varepsilon)\Delta)$ is canonical by what just shown.
    As we reduced to the case where $\cent_X(E) \subset \Supp(\Delta)$, we obtain that $a(E, X, (1+\varepsilon)\Delta) < a(E, X, \Delta) \leq 0$, reaching the desired contradiction.

    To conclude, note that if $x\in\Supp(\Delta)$ then $X$ is regular at $x$, hence the Cartier and Weil indices of $K_X+\Delta$ coincide. If $x\notin\Supp(\Delta)$ then we conclude as 2-dimensional canonical singularities are Gorenstein.
\end{proof}

\begin{corollary}\label{cor: def_canonical}
    Let $R$ be an excellent DVR and let $f \colon (X,\Delta) \to \Spec(R)$ be a family of 2-dimensional couples.
    If $(X_k, \Delta_k)$ is geometrically canonical (resp.~ geometrically terminal), then so is $(X_K,\Delta_K)$.
\end{corollary}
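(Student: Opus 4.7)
The strategy is to apply \autoref{prop: def_can_surfaces} after a base change to each finite algebraic extension $L/K$ inside $\bar K$, and then assemble canonicity of all $(X_L,\Delta_L)$ into geometric canonicity of $(X_K,\Delta_K)$.

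Fix a finite algebraic extension $L/K$ contained in $\bar K$. Since $R$ is excellent, the integral closure $R_L$ of $R$ in $L$ is a finite $R$-algebra and a semi-local Dedekind domain; localising at a maximal ideal yields an excellent DVR $R' \subset L$ with fraction field $L$ and residue field $k'$, a finite extension of $k$. The base change $(X_{R'},\Delta_{R'}) \coloneqq (X,\Delta)\times_R R'$ is a family of 2-dimensional couples over $\Spec(R')$ with special fibre $(X_{k'},\Delta_{k'})$. By hypothesis $(X_{\bar k},\Delta_{\bar k})$ is canonical (resp.\ terminal), and this property descends from $\bar k$ to $k'$: on a log resolution of $(X_{k'},\Delta_{k'})$ the exceptional divisors are smooth over $k'$, so their base changes to $\bar k$ remain smooth and hence reduced, and consequently the discrepancies are preserved. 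Applying \autoref{prop: def_can_surfaces} to the family $(X_{R'},\Delta_{R'}) \to \Spec(R')$ now yields canonicity (resp.\ terminality) of its generic fibre $(X_L,\Delta_L)$.

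To conclude, we show that $(X_{\bar K},\Delta_{\bar K})$ is canonical (resp.\ terminal). Pick a log resolution $\pi \colon Y \to X_{\bar K}$, which exists since $X_{\bar K}$ is a surface and such resolutions are obtained by finitely many blow-ups of closed points. By a standard spreading-out argument, there are a finite extension $L \subset \bar K$ and a log resolution $\pi_L \colon Y_L \to X_L$ whose base change to $\bar K$ recovers $\pi$; the smoothness of the exceptional divisors of $\pi_L$ once again ensures that discrepancies are preserved under the base change $L \subset \bar K$. Since $(X_L,\Delta_L)$ is canonical (resp.\ terminal) by the previous step, every exceptional discrepancy of $\pi$ is non-negative (resp.\ positive), whence $(X_{\bar K},\Delta_{\bar K})$ is canonical (resp.\ terminal).

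The main subtle point throughout is the behaviour of discrepancies under field base change in positive characteristic, where inseparable extensions can a priori introduce multiplicities in the pullback of prime divisors. I expect this to be the chief technical hurdle, and I plan to bypass it uniformly by working with log resolutions: since their exceptional divisors are smooth and smoothness is preserved by any field extension, the base changes stay reduced and discrepancies transfer without correction factors.
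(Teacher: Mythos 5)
Your overall architecture differs from the paper's: you realise each finite extension $L/K$ as the generic fibre of the base-changed family over a localisation of the integral closure of $R$ in $L$, apply \autoref{prop: def_can_surfaces} there, and then spread out a log resolution from $\overline{K}$; the paper instead reduces to the purely inseparable tower $K^{1/p^e}$ via \cite{kk-singbook}*{Proposition 2.15} and handles it by base changing the whole family along the Frobenius $\Spec(R^{1/p^e})\to\Spec(R)$, so that the troublesome inseparable extension of the generic fibre becomes the generic fibre of a new family of couples whose special fibre is $(X_{k^{1/p^e}},\Delta_{k^{1/p^e}})$, canonical by the geometric hypothesis. The point of the paper's route is that it never has to compare discrepancies across an inseparable extension of the generic fibre.

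That comparison is exactly where your argument has a genuine gap. Your uniform bypass --- ``the exceptional divisors of a log resolution are smooth over the base field, so their base changes stay reduced and discrepancies are preserved'' --- is false over an imperfect field: a log resolution of an excellent surface is built by blowing up closed points, and blowing up a closed point whose residue field is inseparable over the base field produces an exceptional $\mathbb{P}^1$ over that residue field, which is regular but not geometrically reduced; its base change to $\overline{k}$ acquires multiplicities and discrepancies are \emph{not} preserved. Since $k$ (and $K$, e.g.\ $K=k((t))$ in equal characteristic) are imperfect in the cases of interest, this cannot be assumed. In Step 1 you are saved by an asymmetry: the implication you actually need (canonicity of $(X_{\overline{k}},\Delta_{\overline{k}})$ implies canonicity of $(X_{k'},\Delta_{k'})$) holds because discrepancies can only decrease under base field extension (this is the content of \cite{kk-singbook}*{Proposition 2.15}), not because they are preserved; you should cite that inequality rather than the false preservation statement. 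Step 2 is fine as written only because there the exceptional divisors over $L$ are geometrically reduced \emph{by construction}, being descents of the reduced exceptional divisors of the resolution of $X_{\overline{K}}$. With the justification in Step 1 replaced by the one-sided statement, your argument closes; as written, the central technical claim is wrong, and it is precisely the phenomenon (regular but not geometrically normal/reduced schemes over imperfect fields) that \autoref{s-example} of the paper exploits.
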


\begin{proof}
    By \autoref{prop: def_can_surfaces} we are only left to check that $(X_{\overline{K}}, \Delta_{\overline{K}})$ is canonical. 
    If $\characteristic(K)=0$, we conclude by \cite{kk-singbook}*{Proposition 2.15} as every field extension is separable and thus \'etale.
    If $\characteristic(K)=p>0$, again by \cite{kk-singbook}*{Proposition 2.15} it is sufficient to check that $(X_{K^{{1/p^e}}}, \Delta_{K^{1/p^e}})$ is canonical for all $e \geq 0.$
    Consider the base change $X^{(e)}\coloneqq X \times_R R^{1/p^e}$ via the Frobenius morphism $\Spec(R^{1/p^e}) \to \Spec(R)$. Note that $(X^{(e)}, \Delta^{(e)})\to \Spec(R^{1/p^e})$ is still a family of couples, since the Frobenius morphism is a universal homeomorphism.
    We have a natural isomorphism $((X^{(e)})_k, (\Delta^{(e)})_k) \simeq (X_{k^{1/p^e}}, \Delta_{k^{1/p^e}})$ which is canonical by hypothesis. 
    Therefore by \autoref{prop: def_can_surfaces} the pair $(X^{(e)}, \Delta^{(e)})$ is canonical and thus $(X_{K^{1/p^e}}, \Delta_{K^{1/p^e}})$ is canonical as well by localisation.
\end{proof}

\section{Running a birational MMP over a DVR in dimension 4} \label{s-MMP-input}
 
Throughout this section, we fix $R$ to be an excellent DVR with perfect residue field of characteristic $p>5$.
We aim to prove a generalisation of \cite{HW20}*{Theorem 4.1} to not necessarily $\mathbb{Q}$-factorial bases.

\begin{hypothesis}
\label{hyp}
Following \cites{HW20, XX22}, we assume that log resolutions of couples in dimension 4 exist and are given by a sequence of blow-ups along the non-snc locus.
\end{hypothesis}

We now prove an MMP statement needed to prove inversion of adjunction and the existence of terminalisations.

\begin{proposition}\label{t-compactmmp}
    Let $X$ be an integral normal scheme of dimension 4, dominant and quasi-projective over $\Spec(R)$.
    Let $f\colon Y\to X$ be a projective log resolution of $(X,X_{k,\red})$.
    Assume that $(Y, B)$ is a dlt pair such that $\lfloor B \rfloor = Y_{k, \red}+\Gamma$, where $\Gamma$ is some reduced divisor (possibly empty).
    Then we can run a $(K_Y+B)$-MMP over $X$ with scaling of an ample divisor, which terminates with a relatively minimal model.
\end{proposition}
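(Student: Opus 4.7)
My plan is to deduce the claim from \cite{HW20}*{Theorem 4.1}, which establishes the analogous statement under the additional assumption that $X$ is $\mathbb{Q}$-factorial, by reducing to a small $\mathbb{Q}$-factorialization of $X$.

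First, I would construct a small projective birational morphism $g \colon X' \to X$ with $X'$ normal and $\mathbb{Q}$-factorial. Under \autoref{hyp}, such an $X'$ can be produced by starting from a log resolution of $(X, X_{k,\red})$ and running a suitable MMP to contract all $X$-exceptional divisors that are not needed to $\mathbb{Q}$-factorialize $X$; the existence of this MMP relies on the semi-stable MMP machinery of \cite{HW20, XX22} in the same spirit as the statement we want to prove, but applied in a simpler setting where only divisorial contractions and flips over $X$ are required. After replacing $Y$ by a common log resolution of $Y$ and $X'$ — which only modifies $Y$ over the snc locus of $(Y, B)$ — one may assume that $f$ factors as $Y \xrightarrow{f'} X' \xrightarrow{g} X$ and that $(Y, B)$ is still dlt with $\lfloor B \rfloor = Y_{k,\red} + \Gamma$, since the new exceptional divisors introduced on $Y$ have discrepancy $> -1$ and can be absorbed into $B - \lfloor B \rfloor$ by a standard perturbation.

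Second, since $g$ is small, any sequence of $(K_Y+B)$-negative extremal contractions, divisorial contractions, and flips over $X'$ is automatically one over $X$ with the same outputs, and the resulting model is relatively minimal over $X$ if and only if it is relatively minimal over $X'$. Therefore, applying \cite{HW20}*{Theorem 4.1} to $f' \colon Y \to X'$ produces the desired $(K_Y+B)$-MMP over $X'$ with scaling of an ample divisor that terminates with a relative minimal model, and this suffices. The essential inputs — existence of flips \cite{HW20} and termination with scaling \cite{XX22} — apply because the inclusion $Y_{k,\red} \subseteq \lfloor B \rfloor$ places us in the semi-stable regime covered by those works.

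The main obstacle I anticipate is the first step: constructing the small $\mathbb{Q}$-factorialization $g \colon X' \to X$ in the mixed and positive characteristic setting, and arranging the factorisation $Y \to X' \to X$ while preserving the hypothesis $\lfloor B \rfloor = Y_{k,\red} + \Gamma$ on the refined resolution. One has to be careful that this reduction is not circular: the MMP needed to construct $X'$ must be handled by an independent application of the semi-stable MMP results of \cite{HW20, XX22}, applied in a setting (e.g.\ starting already from a $\mathbb{Q}$-factorial log resolution of $X$ and contracting down to the required $\mathbb{Q}$-factorialization) where \cite{HW20}*{Theorem 4.1} directly applies, rather than from \autoref{t-compactmmp} itself.
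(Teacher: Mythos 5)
There are two genuine gaps in your reduction, and the paper avoids both by taking a different route entirely. First, the construction of the small $\mathbb{Q}$-factorialization $g \colon X' \to X$ is circular in a way your final paragraph does not escape: producing $X'$ requires running an MMP \emph{over the non-$\mathbb{Q}$-factorial base $X$} (e.g.\ a $(K_W+\mathrm{Ex})$-MMP from a log resolution $W \to X$), and the $\mathbb{Q}$-factoriality hypothesis in \cite{HW20}*{Theorem 4.1} is precisely a hypothesis on the base of the relative MMP, not on the total space (the total space is a log resolution, hence regular, in both statements). Starting ``from a $\mathbb{Q}$-factorial log resolution of $X$'' does not change the base over which the contractions must be performed, so \cite{HW20}*{Theorem 4.1} does not apply to that auxiliary MMP either; you would need exactly \autoref{t-compactmmp} to run it. Second, even granting $X'$, the claim that a model relatively minimal over $X'$ is relatively minimal over $X$ is false when $g$ is small but not an isomorphism: there are curves on the output $Y'$ whose image in $X'$ is a positive-dimensional fibre of $g$; such curves are contracted over $X$ but not over $X'$, so $\overline{NE}(Y'/X')$ is a proper face of $\overline{NE}(Y'/X)$ and nefness of $K_{Y'}+B'$ over $X'$ says nothing about these extra classes. (The further replacement of $Y$ by a common resolution of $Y$ and $X'$ while preserving $\lfloor B\rfloor = Y_{k,\red}+\Gamma$ is also asserted rather than proved, but this is a lesser issue.)

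The paper's proof sidesteps $\mathbb{Q}$-factoriality of the base altogether: it compactifies $X$ and $Y$ to schemes projective over $\Spec(R)$, observes that $\overline{NE}(\overline{Y}/\overline{X})$ embeds into $\overline{NE}(\overline{Y}/R)$, and then imports the cone theorem, base-point freeness, and existence of flips from the semi-stable MMP over $\Spec(R)$ developed in \cite{XX22}. Termination is obtained from special termination together with the fact that vertical curve classes are represented by $1$-cycles supported on the special fibre, so that the MMP eventually becomes disjoint from the special fibre and must stop. If you want to salvage your strategy, you would at minimum have to prove the existence of small $\mathbb{Q}$-factorializations over non-$\mathbb{Q}$-factorial $4$-fold bases by an independent argument and then continue the MMP over $X$ past the $X'$-minimal model, at which point you are essentially reproving the proposition.
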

\begin{proof}
      We define $\Xi \coloneqq B - \lfloor B \rfloor$.
      If $X$ is not projective over $R$, we can take a compactification $\overline{X}\supset X$ such that $\overline{X}\to\Spec (R)$ is projective and $\overline{X}$ is normal.
      Similarly, after possibly blowing up more, by \autoref{hyp} we can find a regular compactification $\overline{Y}$ of $Y$, projective over $R$, fitting in the following commutative diagram
    \begin{center}
        \begin{tikzcd}
            Y \arrow[rr] \arrow[d,"f",swap] & & \overline{Y} \arrow[d,"\overline{f}"] \\
            X \arrow[rr] \arrow[dr] & & \overline{X} \arrow[dl]\\
            & \Spec (R), &
        \end{tikzcd}
    \end{center}
    where the square is Cartesian and $\overline{f}$ is a log resolution of $(\overline{X},\overline{X}_{k,\red})$, such that $(\overline{Y},\overline{Y}_{k, \red}+ \overline{\Gamma} + \overline{\Xi})$ is dlt.
    Here, $\overline{\Gamma}$ and $\overline{\Xi}$ denote the closures of $\Gamma$ and $\Xi$ in $\overline{Y}$, respectively.
    Also, we define $\overline{B} \coloneqq \overline{Y}_{k,\red}+\overline{\Gamma}+\overline{\Xi}$.
    
    We now verify that it is possible to run a $(K_{\overline{Y}}+\overline{B})$-MMP over $\overline{X}$ with scaling of an ample divisor.
    As $\overline{NE}(\overline{Y}/\overline{X}) \hookrightarrow \overline{NE}(\overline{Y}/R)$, the cone theorem is valid by \cite{XX22}*{Proposition 4.3}.
    As for the existence of the contraction of the MMP, the necessary base-point free theorem is proven in \cite{XX22}*{Proposition 4.4 and Remark 4.5}.
    Finally, the existence of flips is proven in \cite{XX22}*{Theorem 4.6} and thus we are left to show termination.
    By special termination \cite{XX22}*{Theorem 3.4 and Theorem 3.13}, the flipping and flipped loci are eventually disjoint from the strict transforms of $\overline{\Gamma}$ and $\overline{Y}_{k,\red}$.
    By \cite{XX22}*{Proposition 4.3}, at every step of the MMP, every curve class that is vertical over $\Spec (R)$ can be represented by a 1-cycle that is supported on the special fibre.
    Thus, the MMP has to terminate, since by special termination it is eventually disjoint from the special fibre.
\end{proof}

\begin{corollary} \label{cor: existence-dlt-modificatin}
    Let $(X,\Delta)\to\Spec(R)$ be a family of 3-dimensional couples that is quasi-projective over $R$. 
    Then there exists a dlt modification $\pi \colon Y \to (X,X_k+\Delta)$ as in \autoref{def: dlt-modification}.
\end{corollary}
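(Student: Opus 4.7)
The plan is to apply \autoref{t-compactmmp} to a log resolution of $(X, X_k + \Delta)$ and then to recognise the resulting relatively minimal model as the desired dlt modification. Using \autoref{hyp}, I would first choose a projective log resolution $f \colon Y' \to X$ of the couple $(X, X_k + \Delta)$ and set
\[
B' \coloneqq f_*^{-1}(X_k + \Delta) + \sum_{j=1}^{N} E'_j,
\]
where $E'_1, \ldots, E'_N$ denote the exceptional prime divisors of $f$. Since $\Delta$ has coefficients in $[0,1]$ and $X_k$ is prime, $B'$ has coefficients in $[0,1]$, and the snc condition makes $(Y', B')$ a dlt pair.

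The next step is to verify the hypothesis of \autoref{t-compactmmp}, namely that $\lfloor B' \rfloor = Y'_{k,\mathrm{red}} + \Gamma$ for some reduced $\Gamma$. By the flatness condition in \autoref{def: family_pairs}(3), every component of $\Delta$ dominates $\Spec R$, and hence the components of $f_*^{-1}(\Delta)$ are horizontal. Splitting the $E'_j$'s into horizontal and vertical ones, the vertical exceptional divisors together with $f_*^{-1}(X_k)$ account for all components of $Y'_{k,\mathrm{red}}$. Setting $\Gamma$ to be the sum of $f_*^{-1}(\lfloor \Delta \rfloor)$ and the horizontal $E'_j$'s then gives a reduced divisor realising the required decomposition. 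Applying \autoref{t-compactmmp} I would run a $(K_{Y'} + B')$-MMP over $X$ with scaling of an ample divisor, terminating with a relatively minimal model $\pi \colon Y \to X$; denoting by $B_Y$ the birational transform of $B'$, the pair $(Y, B_Y)$ is $\mathbb Q$-factorial dlt and $K_Y + B_Y$ is $\pi$-nef.

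To conclude, I would identify $B_Y$ with $\pi_*^{-1}(X_k + \Delta) + \sum_i E_i$, where the $E_i$ are the prime divisorial components of $\Ex(\pi)$. Since the MMP is carried out over $X$, the strict transforms of $X_k$ and of the components of $\Delta$ cannot be contracted, as each of them dominates a divisor in $X$. Hence, on $Y$, the divisor $B_Y$ decomposes into these strict transforms together with the birational images of the $E'_j$'s that survived the MMP, which are exactly the divisorial $\pi$-exceptional components. The only points requiring genuine verification are the form of $\lfloor B' \rfloor$ and the identification of $B_Y$, both of which are immediate consequences of the horizontality of $\Delta$ and of the fact that the MMP is taken relative to $X$; everything else reduces to invoking \autoref{t-compactmmp}.
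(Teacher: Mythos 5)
Your proof is correct and follows essentially the same route as the paper: take a log resolution of $(X,X_k+\Delta)$, put the strict transform of $X_k+\Delta$ together with the exceptional divisors into the boundary, check that the rounded-down part has the form $Y_{k,\red}+\Gamma$, and run the MMP of \autoref{t-compactmmp} over $X$. The only divergence is that the paper first invokes \cite{KW21}*{Theorem 1} to make $\Ex(f)$ purely divisorial before using all of $\Ex(f)$ as boundary, whereas you use only the exceptional prime divisors; since \autoref{def: dlt-modification} refers only to the divisorial part of $\Ex(\pi)$, your output still qualifies as a dlt modification, so this is a harmless simplification rather than a gap.
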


\begin{proof}
    Let $f \colon W \to (X,\Delta)$ be a log resolution of $(X, X_k+\Delta)$. 
    We can suppose that the exceptional locus supports a relatively ample divisor by \cite{KW21}*{Theorem 1}, which implies that the exceptional locus $\Ex(f)$ is divisorial.
    Then we can run a $(K_W+f_*^{-1}(X_k+\Delta)+\Ex(f))$-MMP with scaling as in \autoref{t-compactmmp} to reach a dlt modification.
\end{proof}

We now prove various versions of inversion of adjunction for 4-folds.

\begin{corollary} \label{cor: plt-inv-adj}
    Let $(X,\Delta)\to\Spec(R)$ be a family of 3-dimensional couples such that $K_X+\Delta$ is $\mathbb{R}$-Cartier.
    If $(X_k, \Delta_k)$ is klt, then $(X, X_k+\Delta)$ is plt, and $(X_{\overline{K}},\Delta_{\overline{K}})$ is klt.
\end{corollary}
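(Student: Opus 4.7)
The plan is to apply the dlt modification provided by \autoref{cor: existence-dlt-modificatin} and exploit adjunction along the strict transform of the central fibre, leveraging the klt hypothesis on $(X_k, \Delta_k)$.

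Take $\pi\colon Y\to X$ a dlt modification of $(X, X_k+\Delta)$, and write $(Y, B)$ for the resulting $\mathbb Q$-factorial dlt pair, with $B=\pi_*^{-1}(X_k+\Delta)+\sum_i E_i$ and $K_Y+B$ $\pi$-nef. Setting $F:=K_Y+B-\pi^*(K_X+X_k+\Delta)$, the divisor $F$ is $\pi$-exceptional and $\pi$-nef, so by the negativity lemma $F\leq 0$, and hence $a(E_i, X, X_k+\Delta)\leq -1$ for every exceptional $E_i$. Let $S$ denote the strict transform of $X_k$: it is a normal component of $\lfloor B\rfloor$ with coefficient $1$, and adjunction produces a dlt pair $(S, B_S)$ with $K_S+B_S=(K_Y+B)|_S$. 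Substituting the definition of $F$ yields
\[
K_S+B_S=(\pi|_S)^*(K_{X_k}+\Delta_k)+F|_S,\qquad F|_S\leq 0.
\]

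The core of the argument is to rule out the existence of any exceptional $E_i$. First, suppose that some such $E_i$ meets $S$; the dlt snc structure then forces $E_i\cap S$ to be a prime divisor on $S$ with $\coeff(E_i\cap S, B_S)=1$. Comparison with the adjunction identity above identifies the coefficient of $E_i\cap S$ in $B_S-F|_S$ with $-a(E_i\cap S, X_k, \Delta_k)$ when this divisor is $\pi|_S$-exceptional, and with $\coeff_{\pi|_S(E_i\cap S)}\Delta_k$ otherwise; both are strictly less than $1$ by the klt hypothesis on $(X_k, \Delta_k)$. On the other hand, $F|_S\leq 0$ forces $\coeff(E_i\cap S, B_S-F|_S)\geq 1$, a contradiction. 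To handle exceptional $E_i$ not meeting $S$, I observe that $Y$ is $R$-flat (being integral, normal and dominant over $\Spec R$), so every exceptional $E_i$ is either horizontal or vertical, and in either case $\pi(E_i)\cap X_k\neq\emptyset$. Picking $x$ in this intersection, $\pi^{-1}(x)$ is connected by Zariski's main theorem and meets both $E_i$ and $S$. Since all components of $Y_k$ other than $S$ are $\pi$-exceptional, this connectedness produces a $\pi$-exceptional divisor adjacent to $S$, contradicting the previous step. Hence $\pi$ is a small crepant morphism, and dlt-ness of $(Y, B)$ transfers to plt-ness of $(X, X_k+\Delta)$.

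For the geometric conclusion, since $\Delta_k$ has all coefficients in $[0, 1)$ by the klt hypothesis, the same holds for $\Delta$ and $\Delta_K$ by flatness of the components over $\Spec R$, so restricting the plt pair to the open $X_K$ yields $(X_K, \Delta_K)$ klt. In mixed characteristic, $\overline K/K$ is separable and one concludes by \cite{kk-singbook}*{Proposition 2.15}; in equicharacteristic $p$, the Frobenius base-change argument used in \autoref{cor: def_canonical} reduces matters to the finite level version of the same statement. The main obstacle is the connectedness step used to handle exceptional $E_i$ disjoint from $S$: rigorously arranging the chain of $\pi$-exceptional divisors connecting $E_i$ to $S$ requires a careful combination of Zariski's main theorem with the structural properties of dlt pairs on 4-folds in mixed characteristic, relying on \cites{HW20, XX22} and \autoref{hyp}.
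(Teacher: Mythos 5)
Your proof is correct and is essentially the paper's own argument: the paper reduces to the affine, finite-type case and then invokes the proof of \cite{HW20}*{Corollary 4.9} together with \autoref{cor: existence-dlt-modificatin}, which is exactly your scheme of dlt modification, negativity, adjunction to the strict transform of $X_k$, and connectedness of the fibres of $\pi$ (compare also the paper's proof of \autoref{cor: inv-adjunction}, which runs the same argument in the lc case). The ``main obstacle'' you flag at the end is not actually a gap: since $\pi^{-1}(x)\subset Y_k$ is connected, meets both $S$ and $E_i$, and is covered by the two closed sets $S$ and $\bigcup_j E_j$, these cannot be disjoint, which is all the argument needs --- no chain of exceptional divisors has to be produced; the only point worth adding explicitly is the paper's initial reduction to $X$ affine and of finite type over $R$, needed to apply \autoref{cor: existence-dlt-modificatin}.
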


\begin{proof}
    Since the statement is local, we can suppose $X$ to be affine and of finite type over $R$.
    The same proof of \cite{HW20}*{Corollary 4.9}, using \autoref{cor: existence-dlt-modificatin}, yields plt-ness of $(X,X_k+\Delta)$. The same argument as in \autoref{cor: def_canonical} yields that $(X_{\overline{K}},\Delta_{\overline{K}})$ is klt. 
\end{proof}

\begin{corollary}\label{cor: inv-adjunction}
    Let $(X, \Delta)$ be a pair of dimension 4 and let $X \to\Spec(R)$ be a flat and closed morphism with reduced connected fibres.
    Suppose that $\Supp(\Delta)$ does not contain any irreducible component of $X_k$ and none of the irreducible components of $X_k \cap \Supp(\Delta)$ is contained in $\Sing(X_k)$.
    If the normalisation $(X_k^\nu, \Delta_k^\nu+D)$ is log canonical, where $D$ is the divisorial part of the subscheme cut out by the conductor ideal, then $(X, X_k+\Delta)$ is log canonical, and so is $(X_{\overline{K}},\Delta_{\overline{K}})$.
\end{corollary}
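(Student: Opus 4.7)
The strategy adapts the proof of \autoref{cor: plt-inv-adj} (itself modeled on \cite{HW20}*{Corollary 4.9}) by replacing adjunction on a normal central fibre with Koll\'ar's adjunction for non-normal divisors (cf.~\cite{kk-singbook}*{Chapter 4}), through which the restriction $(K_X+X_k+\Delta)|_{X_k^\nu}$ recovers the pair $(X_k^\nu, \Delta_k^\nu+D)$. The technical hypotheses on $\Supp(\Delta)$ and $\Sing(X_k)$ are precisely what is needed to make this non-normal adjunction available on $X_k$.

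Since the statement is local on $X$, we may assume $X$ is quasi-projective over $\Spec(R)$. By \autoref{cor: existence-dlt-modificatin}, take a dlt modification $\pi \colon (Y, B_Y) \to (X, X_k+\Delta)$, where $B_Y = \pi_*^{-1}(X_k+\Delta) + \sum_j E_j$ and the $E_j$ are the $\pi$-exceptional prime divisors. Let $S_Y \coloneqq \pi_*^{-1}(X_k) \subseteq \lfloor B_Y \rfloor$; by the dlt property its components are normal, so $\pi|_{S_Y}$ factors through a birational morphism $\tau \colon S_Y \to X_k^\nu$. Since $K_X + X_k + \Delta$ is $\mathbb{R}$-Cartier, write $K_Y + B_Y = \pi^*(K_X + X_k + \Delta) + G$ with $G$ a $\pi$-exceptional $\mathbb{R}$-divisor; as $K_Y+B_Y$ is $\pi$-nef, the negativity lemma gives $G \leq 0$, equivalently $a(E_j, X, X_k+\Delta) \leq -1$ for every $j$.

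Suppose for contradiction that $(X, X_k+\Delta)$ is not log canonical: then some $E_{j_0}$ satisfies $a(E_{j_0}, X, X_k+\Delta) < -1$. If $E_{j_0} \cap S_Y \neq \emptyset$, then by dlt adjunction on $S_Y$ and Koll\'ar's non-normal adjunction on $X_k^\nu$,
\[
K_{S_Y} + \Diff_{S_Y}(B_Y - S_Y) = \tau^*(K_{X_k^\nu} + \Delta_k^\nu + D) + G|_{S_Y},
\]
and the strictly negative coefficient of $E_{j_0}|_{S_Y}$ in $G|_{S_Y}$ produces a divisorial valuation over $(X_k^\nu, \Delta_k^\nu + D)$ with discrepancy strictly less than $-1$, contradicting the log canonicity hypothesis. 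If instead $\pi(E_{j_0}) \cap X_k = \emptyset$, then $E_{j_0}$ corresponds to a valuation over the generic fibre with $a(E_{j_0}, X_K, \Delta_K) < -1$; this case is excluded by a specialization argument along the lines of \autoref{cor: def_canonical}, reducing non-log-canonical behaviour on $(X_K, \Delta_K)$ to a contradiction with log canonicity of the special fibre. Finally, the log canonicity of $(X_{\overline{K}}, \Delta_{\overline{K}})$ follows by the perfect-closure base change argument from the proof of \autoref{cor: def_canonical}.

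The principal technical obstacles are (i) carefully combining dlt adjunction on $S_Y$ with Koll\'ar's non-normal adjunction on $X_k^\nu$ through the birational morphism $\tau$, in particular tracking the conductor and the discrepancies in mixed characteristic, and (ii) handling the case where a discrepancy-violating $E_{j_0}$ has center disjoint from $X_k$, which requires a direct analysis of the dlt modification restricted to the generic fibre together with a specialization/MMP-theoretic reduction to the special fibre.
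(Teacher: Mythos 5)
Your overall strategy (dlt modification, negativity lemma, adjunction to the central fibre, perfect-closure base change for the statement over $\overline{K}$) is the same as the paper's, but your case analysis has a genuine gap. You treat two cases: (i) $E_{j_0} \cap S_Y \neq \emptyset$, and (ii) $\pi(E_{j_0}) \cap X_k = \emptyset$. These are not exhaustive. In fact case (ii) is vacuous: since $X \to \Spec(R)$ is a closed morphism and $\pi$ is projective, $\pi(E_{j_0})$ is closed in $X$ and its image in $\Spec(R)$ is closed and nonempty, so it always meets $X_k$. The case you actually need to exclude is $\pi(E_{j_0}) \cap X_k \neq \emptyset$ but $E_{j_0} \cap S_Y = \emptyset$: a non-lc place lying over a point or curve of $X_k$ whose divisor on $Y$ is disjoint from the strict transform of $X_k$. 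Nothing in your argument rules this out, and in that situation the adjunction step produces no non-lc place of $(X_k^\nu, \Delta_k^\nu + D)$, so no contradiction.

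The paper closes exactly this gap with a connectedness argument, namely \autoref{rmk: inv_of_adj}. Writing $\pi^*(K_X + X_k + \Delta) = K_Y + \pi_*^{-1}(X_k+\Delta) + \sum E_i + F$ with $F \geq 0$ (your $G$ equals $-F$), the divisor $-F$ is $\pi$-nef, anti-effective and $\pi$-exceptional because $K_Y+B_Y$ is $\pi$-nef by the definition of dlt modification; hence for every $x \in X$ the fibre $\pi^{-1}(x)$ is either contained in $\Supp(F)$ or disjoint from it. Choosing $x$ in the (nonempty, by closedness over $\Spec(R)$) intersection of $X_k$ with the centre of a component of $F$, and using that $\pi_*^{-1}X_k \to X_k$ is surjective, one concludes $\Supp(F) \cap \pi_*^{-1}X_k \neq \emptyset$; $\mathbb{Q}$-factoriality of $Y$ then makes this intersection a divisor in $\pi_*^{-1}X_k$, and adjunction yields the contradiction. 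You should replace your dichotomy with this argument. The ``specialization argument along the lines of \autoref{cor: def_canonical}'' you invoke in case (ii) is in any event unnecessary and, as sketched, does not apply: there is no log canonicity hypothesis on the generic fibre available to be contradicted.
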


\begin{proof}   
    Since the statement is local, we can suppose $X$ to be affine and of finite type over $R$.
    Note that asking the divisorial part of the conductor to be reduced is equivalent to the fact that the singularities of $X_k$ are at worst nodal in codimension 1 by \cite{k-moduli}*{Lemma 11.16}.
    Consider a dlt modification $\pi \colon Y \rightarrow X$ of $(X,X_k+\Delta)$ as in \autoref{cor: existence-dlt-modificatin} with exceptional divisors $E_i$.
    Then, $(Y,\pi^{-1}_*(X_k+\Delta)+\sum E_i)$ is a dlt pair such that $K_Y+\pi^{-1}_*(X_k+\Delta)+\sum E_i$ is nef over $X$.
    Furthermore, we may write
    \begin{equation}\label{eq: crepant_pullback}
    \pi^{*}(K_X+X_k+\Delta)=K_Y+\pi^{-1}_*(X_k+\Delta)+\sum E_i+F,
    \end{equation}
    where $F$ is $\pi$-exceptional and effective by the negativity lemma \cite{7authors}*{Lemma 2.16}. 
    In particular, $F=0$ if and only if $(X,X_k+\Delta)$ is log canonical.
    Assume by contradiction $F \neq 0$.
    By \autoref{eq: crepant_pullback}, we have
    \[
    -F \equiv_X K_Y+\pi^{-1}_*(X_k+\Delta)+\sum E_i.
    \]
    In particular, $-F$ is $\pi$-nef, anti-effective, and $\pi$-exceptional.

    By construction, $\pi$ has connected fibres.
    Furthermore, for every $\pi$-exceptional divisor $E$, we have $\cent_X(E) \cap X_k \neq \emptyset$.
    Thus, by \autoref{rmk: inv_of_adj}, $\Supp(F) \cap \pi^{-1}_*X_k \neq \emptyset$.
    Since $Y$ is $\mathbb Q$-factorial, this intersection is a divisor in $\pi_*^{-1}X_k$.
    Then, by adjunction, we obtain that $(X_k^\nu, \Delta_k^\nu+D)$ is not log canonical, thus reaching the sought contradiction.
    The same argument as in \autoref{cor: def_canonical} yields that $(X_{\overline{K}},\Delta_{\overline{K}})$ is log canonical.
\end{proof}

We now show the existence of terminalisation for klt pairs.

\begin{corollary} \label{cor: existence_terminalisation}
Let $(X, \Delta) \to \Spec(R)$ be a quasi-projective family of couples such that $K_X + \Delta$ is $\mathbb{R}$-Cartier.
If $(X_k, \Delta_k)$ is a 3-dimensional klt pair, then there exists a terminalisation $Z \to X$ of $(X,\Delta)$ such that $Z_k$ is irreducible.
\end{corollary}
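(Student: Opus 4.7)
The plan is to realise $Z$ as the output of a suitable relative MMP, using \autoref{t-compactmmp}. First, by \autoref{cor: plt-inv-adj}, the pair $(X,X_k+\Delta)$ is plt, and in particular $(X,\Delta)$ is klt. By \autoref{lem: lemma_trick}, every exceptional divisor $E$ over $X$ with $\cent_X(E)\subseteq X_k$ satisfies $a(E,X,\Delta)>0$, so every exceptional divisor with $a(E,X,\Delta)\leq 0$ must have centre dominating $\Spec(R)$. Since $(X,\Delta)$ is klt, only finitely many such divisors exist; using \autoref{hyp}, we would select a log resolution $g\colon W\to X$ of $(X,X_{k,\red}+\Delta)$ which extracts all of them.

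The next step is to choose the boundary on $W$ so that the MMP targets precisely the divisors of positive discrepancy. Let $V$ denote the sum of the vertical $g$-exceptional prime divisors, and write the horizontal $g$-exceptional primes as $H=H_+\sqcup H_-$, where $H_+$ gathers those of positive discrepancy with respect to $(X,\Delta)$ and $H_-$ gathers those of non-positive discrepancy. For a sufficiently small rational $\epsilon>0$, we set
\[
B \coloneqq g_{*}^{-1}\Delta + Y_{k,\red} + (1-\epsilon)H_+ + \sum_{H_j\subseteq H_-}\bigl(-a(H_j,X,\Delta)\bigr)H_j.
\]
The klt hypothesis forces the coefficients of $B$ to lie in $[0,1]$, so $(W,B)$ is dlt with $\lfloor B\rfloor=Y_{k,\red}+\lfloor g_{*}^{-1}\Delta\rfloor$, placing us in the hypotheses of \autoref{t-compactmmp}. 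A direct discrepancy computation, combined with the inequality $a(V_i,X,X_k+\Delta)>-1$ coming from plt-ness of $(X,X_k+\Delta)$, yields
\[
K_W+B \sim_{\mathbb{Q},X} \sum_{V_i}\bigl(1+a(V_i,X,X_k+\Delta)\bigr)V_i + \sum_{H_j\subseteq H_+}\bigl(1+a(H_j,X,\Delta)-\epsilon\bigr)H_j,
\]
an effective $g$-exceptional divisor supported precisely on $V\cup H_+$.

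We then run the $(K_W+B)$-MMP over $X$ with scaling as provided by \autoref{t-compactmmp}. The effective-exceptional representation above, together with a standard analysis of divisorial extremal rays and the negativity lemma, will force the outcome $f\colon Z\to X$ to contract exactly $V\cup H_+$ and to preserve the strict transforms of $H_-$, $g_{*}^{-1}X_k$, and $g_{*}^{-1}\Delta$. Setting $\Delta_Z\coloneqq f_{*}^{-1}\Delta+\sum_{H_j\subseteq H_-}(-a(H_j,X,\Delta))H_j|_Z$, and noting that no $f$-exceptional divisor has image contained in $X_k$, we obtain $f^{*}X_k=f_{*}^{-1}X_k$. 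Combining this with the crepant identity $K_Z+B_Z=f^{*}(K_X+X_k+\Delta)$ inherited from the MMP, subtracting $f_{*}^{-1}X_k=f^{*}X_k$ gives $K_Z+\Delta_Z=f^{*}(K_X+\Delta)$. Terminality of $(Z,\Delta_Z)$ is then immediate from our choice of $W$: every exceptional divisor over $Z$ either was contracted with positive discrepancy, or arises on a higher birational model and so has positive discrepancy, since we have already extracted every divisor with $a\leq 0$.

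The irreducibility of $Z_k$ drops out directly: the identification $f^{*}X_k=f_{*}^{-1}X_k$ realises the Cartier divisor $Z_k$ as the strict transform of the irreducible divisor $X_k$. The delicate step, where we expect the main technical work to concentrate, is the claim that the MMP contracts precisely $V\cup H_+$ while leaving the components of $H_-$ untouched; this is controlled by the effective-exceptional form of $K_W+B$ over $X$, together with the negativity lemma applied at each divisorial contraction and, ultimately, to the strict transform of the right-hand side on the output model $Z$.
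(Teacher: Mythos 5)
Your argument is correct and rests on the same machinery as the paper's proof --- plt inversion of adjunction (\autoref{cor: plt-inv-adj}), \autoref{lem: lemma_trick}, the relative MMP of \autoref{t-compactmmp}, and the negativity lemma --- but the discrepancy bookkeeping is genuinely different. The paper takes a log resolution of $(X,X_k+\Delta)$, writes $K_Y+D=f^*(K_X+\Delta)+F$ with $D$ and $F$ effective and without common components, separates the components of $D$ so that $(Y,D)$ is terminal, and runs the $(K_Y+Y_{k,\red}+D^h)$-MMP; terminality of the output is then obtained in two stages, over the generic point because the contraction is $F_K$-negative and $(Y_K,D_K)$ is terminal, and over the special fibre from plt-ness of $(X,X_k+\Delta)$ combined with the Cartier-ness of $Z_k$, while irreducibility of $Z_k$ comes from the contradiction $a(P,X,X_k+\Delta)>-1$ versus $a(P,Z,Z_{k,\red}+D^h_Z)=-1$. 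You instead pre-extract the finitely many divisorial valuations of non-positive discrepancy (finiteness for klt pairs is standard but does rely on \autoref{hyp}) and rig the boundary $B$ so that $K_W+B$ is, over $X$, an effective exceptional divisor supported exactly on the divisors to be contracted; your computation of that divisor is correct, and the ``delicate step'' you flag is exactly the mechanism the paper also uses: a divisorial contraction in a $G$-MMP with $G\geq 0$ can only contract components of $\Supp(G)$, and on the relatively minimal model the nef, effective, exceptional pushforward of $G$ vanishes by the negativity lemma. The trade-off is that your route needs the a priori finiteness and extraction of all valuations with $a\leq 0$, which the paper sidesteps by making $(Y,D)$ terminal; in exchange you get a one-shot terminality argument and a more transparent reason why $Z_k$ is irreducible (no surviving exceptional divisor is vertical, so $f^*X_k=f_*^{-1}X_k$). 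Minor slips only: the $Y_{k,\red}$ in your definition of $B$ should read $W_{k,\red}$, and for $\mathbb{R}$-coefficients the relation is an equality of $\mathbb{R}$-divisors modulo pullback rather than a $\mathbb{Q}$-linear equivalence.
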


\begin{proof}
By inversion of adjunction, i.e., \autoref{cor: plt-inv-adj}, we know the pair $(X, X_k+\Delta)$ is plt.
Let $f\colon Y\to X$ be a log resolution of $(X,X_k+\Delta)$ and write $K_Y+D=f^*(K_X+\Delta)+F$, where $D$ and $F$ are effective $\mathbb{R}$-divisors without common components.
Since $(X, \Delta)$ is klt, up to a further sequence of blow-ups, we may assume that no distinct components of $\Supp(D)$ intersect.
In particular, we may assume that $(Y,D)$ is terminal.
Since $(Y,D)$ is terminal and $Y$ is regular, letting $D^h$ be the part of $D$ which is horizontal over $\Spec(R)$, we obtain that $(Y,Y_{k,\red}+D^h)$ is a dlt pair such that $(Y_K, D_K)$ is terminal.
By \autoref{t-compactmmp}, we can run a $(K_Y+Y_{k,\red}+D^h)$-MMP over $X$, which terminates with a relatively minimal model $g \colon (Z,Z_{k,\red}+D^h_Z) \to X$, where $D^h_Z$ is the push-forward of $D^h$ under the birational contraction $Y \dashrightarrow Z$.
By construction, we have that $(Z,Z_{k,\red}+D^h_Z)$ is dlt and $g_*(Z_{k,\red}+D^h_Z)=X_k+\Delta$. 
We observe $K_{Y_K}+D_K \equiv_{X_K} F_K$.
Then, $Y_K \dashrightarrow Z_K$ is $F_K$-negative.
In particular, $Y_K \dashrightarrow Z_K$ is an isomorphism away from $\Supp(F_K)$.
Therefore, since $D$ and $F$ share no common components, no irreducible component of $D_K$ is contracted by $Y_K \dashrightarrow Z_K$.
Since the MMP terminates with a relatively minimal model, the strict transform of $F_K$ on $Z_K$ is relatively nef over $X_K$.
Then, the negativity lemma \cite{7authors}*{Lemma 2.16} implies that all irreducible components of $F_K$ are contracted on $Z_K$.
Therefore, $(Z_K, D_{Z,K})$ is a terminalisation of $(X_K, \Delta_K)$.

Then, the negativity lemma \cite{7authors}*{Lemma 2.16} yields
\begin{equation}\label{e-neg}
K_Z+Z_{k,\red}+D_Z^h=g^*(K_X+X_k+\Delta)-E,
\end{equation}
where $E$ is an effective $g$-exceptional divisor. 
As $(Z_K, D_{Z,K})$ is a terminalisation, we know $\cent_X(E) \subset X_k$.
Now we argue that $Z_k$ is irreducible: as $E$ is supported on the special fibre, this will also imply $E=0$.
Let $P$ be a $g$-exceptional prime component of $Z_{k,\red}$. Then by plt-ness we know that $a(P, X, X_k + \Delta) > -1$. 
By \autoref{e-neg}, we also have $a(P, X, X_k + \Delta) \leq a(P, Z, Z_{k, \red} +D_Z^h)=-1$, reaching a contradiction.
Notice that, since $Z_k$ is irreducible, it is a Cartier divisor, and $X_k$ is reduced, it follows that also $Z_k$ is reduced.

By what we have just shown, we have that $D_Z^h=D_Z$ and $K_Z + D_Z^h = g^*(K_X+ \Delta)$. 
We are left to show that $(Z, D_Z)$ is terminal. 
Assume by contradiction it is not the case and there exists an exceptional divisorial valuation $Q$ such that $a(Q, Z, D_Z) \leq 0$.
Since $(Z_K, D_K)$ is terminal, we have that $\cent_Z(Q) \subset Z_k$. 
As $Z_k$ is Cartier, we have that $a(Q, Z, Z_k+D_Z) \leq -1$, contradicting plt-ness.
\end{proof}

\begin{corollary} \label{cor: existence_extractions}
Let $(X, \Delta) \to \Spec(R)$ be a quasi-projective family of couples, such that $K_X + \Delta$ is $\mathbb{R}$-Cartier and $(X_k,\Delta_k)$ is a 3-dimensional log canonical pair.
Let $P$ be an exceptional divisorial valuation with $a(P,X,\Delta) < 0$ such that $\cent_X(P)$ dominates $\Spec(R)$.
Then, there exists a projective birational morphism $\pi \colon Z \rightarrow X$ such that the following conditions hold:
\begin{enumerate}
    \item\label{item: anti_nef} $P_Z \coloneqq \cent_Z(P)$ is a $\mathbb Q$-Cartier divisor such that $-P_Z$ is $\pi$-nef;
    \item\label{item: conn_comp} for every point $x \in X$, either $\pi^{-1}(x)$ is contained in $P_Z$ or it is disjoint from it; and
    \item\label{item: irr_fiber} $P_Z \cap \pi^{-1}_* X_k = P_Z \cap Z_k \neq \emptyset$.
\end{enumerate}
Furthermore, if $(X_K,\Delta_K)$ is klt, we may choose $\pi$ such that $\Ex(\pi)=P_Z$ and $-P_Z$ is $\pi$-ample.
\end{corollary}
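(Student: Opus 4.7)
The plan is to apply \autoref{cor: inv-adjunction} to obtain log canonicity of $(X, X_k+\Delta)$, take a projective log resolution $f\colon Y\to X$ on which $P$ already appears as a divisor, and extract $P$ via an MMP over $X$ provided by \autoref{t-compactmmp}, in the spirit of the proof of \autoref{cor: existence_terminalisation}.

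We begin by noting that, as $(X_k, \Delta_k)$ is a couple, $X_k$ is normal, so the divisorial part of the conductor in \autoref{cor: inv-adjunction} vanishes and $(X, X_k+\Delta)$ is log canonical. By \autoref{hyp}, we fix a projective log resolution $f\colon Y\to X$ of $(X, X_k+\Delta)$ on which $P$ is realised as a prime divisor, and denote the remaining $f$-exceptional prime divisors by $E_1,\dots,E_n$. Since $\cent_X(P)$ dominates $\Spec(R)$, the divisor $P$ is horizontal over $R$, hence $a(P, X, X_k+\Delta) = a(P, X, \Delta) \in [-1, 0)$. We define
\[
B \coloneqq f_*^{-1}\Delta + Y_{k,\red} + \sum_{i \colon E_i \text{ horizontal}} E_i + \bigl(-a(P,X,\Delta)\bigr) P.
\]
Then $(Y, B)$ is snc, hence dlt, with $\lfloor B\rfloor = Y_{k,\red} + \Gamma$ for some reduced divisor $\Gamma$, and a direct discrepancy computation yields
\[
K_Y + B = f^*(K_X + X_k + \Delta) + D_+, \qquad D_+ \coloneqq \sum_{E\in\Exc(f)\setminus\{P\}}\!\!\bigl(1 + a(E, X, X_k+\Delta)\bigr)\, E,
\]
with every coefficient of $D_+$ non-negative by log canonicity.

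For $\epsilon>0$ sufficiently small, $(Y, B - \epsilon P)$ remains dlt with $\lfloor B - \epsilon P\rfloor \supseteq Y_{k,\red}$, so by \autoref{t-compactmmp} we may run a $(K_Y + B - \epsilon P)$-MMP over $X$ with scaling. It terminates with a $\mathbb Q$-factorial relatively minimal model $\pi\colon Z\to X$ on which $K_Z + B_Z - \epsilon P_Z$ is $\pi$-nef. We verify that $P$ is not contracted during the MMP: any divisorial contraction of $P$ would yield a curve $C \subset P$ with $P \cdot C < 0$, whence $(K_Y + B - \epsilon P) \cdot C = D_+ \cdot C - \epsilon P \cdot C > 0$ since $D_+ \cdot C \geq 0$ ($\Supp D_+$ is disjoint from $P$ as a divisor), contradicting $(K_Y+B-\epsilon P)\cdot C<0$. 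Hence $P_Z = \cent_Z(P)$ is a prime $\mathbb Q$-Cartier divisor on $Z$. Pushing the equation for $K_Y + B$ through the MMP gives $K_Z + B_Z - \pi^*(K_X+X_k+\Delta) = g_* D_+$, which is effective and $\pi$-exceptional; the $\pi$-exceptional $\pi$-nef divisor $g_* D_+ - \epsilon P_Z$ must then satisfy $\epsilon P_Z - g_*D_+ \geq 0$ by \cite{7authors}*{Lemma 2.16}, and since the supports of $g_*D_+$ and $P_Z$ are disjoint we conclude $g_* D_+ = 0$ and $K_Z + B_Z = \pi^*(K_X+X_k+\Delta)$. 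Combined with the $\pi$-nefness of $K_Z + B_Z - \epsilon P_Z$, this forces $-P_Z$ to be $\pi$-nef, proving (1). Property (3) follows because $P_Z$ is horizontal over $\Spec(R)$, so $P_Z \cap Z_k \neq \emptyset$; property (2) follows from the connectedness argument underlying \autoref{rmk: inv_of_adj} applied to $K_Z + B_Z = \pi^*(K_X+X_k+\Delta)$, once one checks that $P_Z$ is the unique $\pi$-exceptional component appearing in $\Supp(\pi_*^{-1}(X_k+\Delta) + \Exc(\pi) - B_Z)$ when $a(P, X, \Delta) > -1$.

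For the "Furthermore" statement, if $(X_K, \Delta_K)$ is klt then every horizontal $E_i \neq P$ has $a(E_i, X_K, \Delta_K) > -1$, so its coefficient in $D_+$ is strictly positive, and the identity $g_* D_+ = 0$ established above forces each such $E_i$ to be contracted during the MMP. A subsequent MMP contracts any residual vertical exceptional divisors, yielding $\Ex(\pi)=P_Z$, and the $\pi$-ampleness of $-P_Z$ then follows from the final step of the MMP with scaling having relative Picard rank one. The main technical difficulty is verifying that $P$ is preserved by the MMP, via the curve-class calculation above---this works for any $\epsilon$ with $0 < \epsilon \leq -a(P, X, \Delta)$ and relies critically on $\Supp D_+$ being disjoint from $P$ as a divisor; the remaining properties reduce to standard applications of the negativity lemma.
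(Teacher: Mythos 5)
Your proof of items (1)--(3) is essentially sound and follows the same skeleton as the paper's (inversion of adjunction, log resolution, an MMP over $X$ via \autoref{t-compactmmp}, negativity lemma), but you compress the paper's two MMPs into a single $(K_Y+B-\epsilon P)$-MMP; this is a legitimate variant, and the computation $K_Y+B=f^*(K_X+X_k+\Delta)+D_+$ with $D_+\geq 0$ and $P\not\subset\Supp D_+$ is correct. Two caveats. First, your justification that $P$ survives the MMP is wrong as stated: $\Supp D_+$ is \emph{not} disjoint from $P$ (on a log resolution the exceptional divisors meet); $P$ merely fails to be a \emph{component} of $D_+$, so $D_+\cdot C\geq 0$ can fail for a particular curve $C\subset P$. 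The argument is saved by taking $C$ to be a \emph{general} contracted curve of the putative divisorial contraction of $P$: such curves sweep out $P$, so a general one is not contained in the surface $\Supp D_+\cap P$, and then $D_+\cdot C\geq 0$ together with $P\cdot C<0$ gives $(D_+-\epsilon P)\cdot C>0$, the desired contradiction. (The paper sidesteps this: its first MMP is directed by an effective exceptional divisor whose support does not contain $P$, so $P$ trivially survives, and only afterwards does it run the $(-P)$-directed MMP.) Second, your derivation of (2) via \autoref{rmk: inv_of_adj} and a uniqueness condition on the exceptional component is an unnecessary detour that moreover excludes the case $a(P,X,\Delta)=-1$; as the paper notes, (2) is a formal consequence of (1) alone: a connected fibre meeting $P_Z$ without being contained in it carries a curve $C$ with $P_Z\cdot C>0$, contradicting the $\pi$-nefness of $-P_Z$.

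The genuine gap is in the ``Furthermore'' statement. Your observation that $g_*D_+=0$ forces all horizontal exceptional divisors other than $P$ to be contracted is correct, but vertical exceptional divisors with $a(E,X,X_k+\Delta)=-1$ have coefficient $0$ in $D_+$ and need not be contracted, and ``a subsequent MMP'' cannot remove them: $-P_Z$ is already $\pi$-nef, so there is no divisor left to run an MMP against, and the part of $\Ex(\pi)$ on which $-P_Z$ is numerically trivial must be contracted by an actual morphism over $X$. Likewise the claim that $-P_Z$ is $\pi$-ample because ``the final step has relative Picard rank one'' is unjustified: $X$ is not assumed $\mathbb{Q}$-factorial and $\rho(Z/X)$ can be arbitrary even when $\Ex(\pi)$ is irreducible. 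What is actually needed---and what the paper uses---is a relative base-point-free theorem: since $(X_K,\Delta_K)$ is klt one has $\lfloor D_Z^h\rfloor=0$, so \cite{XX22}*{Proposition 4.1} shows that $-P_Z$ is semi-ample over $X$, and one replaces $Z$ by the relative ample model of $-P_Z$; there $-P_Z$ is $\pi$-ample, every $\pi$-contracted curve lies in $P_Z$, and hence $\Ex(\pi)=P_Z$. Without this semi-ampleness input your construction does not yield the final assertion.
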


\begin{proof}
First, we observe that \autoref{item: conn_comp} is a formal consequence of \autoref{item: anti_nef}, similarly to what is observed in \autoref{rmk: inv_of_adj}.
Then, \autoref{item: irr_fiber} follows from \autoref{item: conn_comp}, as $P_Z \cap Z_k \neq \emptyset$.
Thus, it suffices to show that \autoref{item: anti_nef} holds.

By \autoref{cor: inv-adjunction}, $(X, X_k+\Delta)$ is a log canonical pair.
Let $f\colon Y\to X$ be a log resolution of $(X,X_k+\Delta)$ such that $P_Y \coloneqq \cent_Y(P)$ is a divisor.
We may further assume that $\Ex(f)$ is purely divisorial and supports an $f$-ample divisor.
Write $K_Y+D=f^*(K_X+\Delta)+F$, where $D$ and $F$ are effective $\mathbb{R}$-divisors without common components.
Notice that $P_Y$ is in $\Supp(D^h)$ since $a(P,X,\Delta) < 0$.
Since $f$ is a log resolution, $(Y,Y_{k,\red}+D^h)$ is dlt.
By \autoref{t-compactmmp}, we may run a $(K_Y+Y_{k,\red}+D^h)$-MMP with scaling over $X$, which terminates with a relatively minimal model $(W,W_{k,\red}+D_W^h)$.
By construction, $W$ is $\mathbb Q$-factorial and, as $P_Y$ is in $\Supp(D^h)$, $P_W \coloneqq \cent_W(P)$ is a divisor contained in $\Supp(D_W^h)$.

Since $(X,X_k+\Delta)$ is log canonical and $(W,W_{k,\red}+D_W^h)$ is relatively minimal,
by the same arguments involving the negativity lemma as in the proof of \autoref{cor: existence_terminalisation},
we have $K_W+W_{k,\red}+D_W^h=\rho^*(K_X+X_k+\Delta)$, where $\rho \colon W \rightarrow X$ denotes the natural morphism.
Then, we have
\[
K_W+W_{k,\red}+D_W^h+a(P,X,\Delta)P_W \equiv_X a(P,X,\Delta)P_W.
\]
Then, we run a $(K_W+W_{k,\red}+D_W^h+a(P,X,\Delta)P_W)$-MMP with scaling over $X$, which terminates with a relatively minimal model $V$ by \autoref{t-compactmmp}.
Let $\sigma \colon V \rightarrow X$ denote the induced morphism.
Since it is a $(-P_W)$-MMP, $P_W$ cannot be contracted.
We denote by $P_V$ its strict transform on $V$.
Then, by construction $-P_V$ is $\sigma$-nef.
Then, to conclude that \autoref{item: anti_nef} holds, we may take $Z=V$.

If we further assume that $(X_K,\Delta_K)$ is klt, it follows that $\lfloor D_V^h \rfloor=0$.
Thus, by \cite{XX22}*{Proposition 4.1}, $-P_V$ is relatively semi-ample over $X$.
In this case, we take $Z$ to be the relatively ample model of $-P_V$ over $X$.
\end{proof}

\section{Deformations of 3-folds in positive and mixed characteristic} \label{s-defo}

We fix $R$ to be an excellent DVR with perfect residue field $k$ of characteristic $p>5$, and we assume \autoref{hyp}.

\subsection{Deformations of 3-dimensional canonical singularities} \label{ss-deformations-canonical}

We show that canonical and terminal 3-dimensional singularities are stable under small deformations.
The case of $\mathbb{Q}$-factorial terminal singularities has been proven in \cite{HW20}*{Corollary 6.7}.

\begin{proof}[Proof of \autoref{thm: def_canonical}]
By \autoref{rem: normality_family}, $X$ is normal and it has property $(S_3)$.
Write $\Delta=\sum_{i=1}^m b_i B_i$, where $b_i \in \mathbb{R}$ and $B_i$ are distinct prime divisors.
Fix a basis $(d_1,\ldots,d_l)$ of the $\mathbb{Q}$-vector space $\text{Span}_{\mathbb{Q}} (1, b_1, \dots, b_m)$ such that $d_i >0$ for all $i$ and $\sum d_i=1$.
By \cite{k-moduli}*{11.43.4}, there exists a unique collection of $\mathbb{Q}$-divisors $D_i$, not necessarily effective, such that $\Delta= \sum d_i D_i$. 
We claim that there exists a closed subset $Z \subset X$ such that $K_X+D_i$ is $\mathbb{Q}$-Cartier outside $Z$ and $\cod_{X_k}(Z \cap X_k) \geq 3$.
Indeed, by localising $X$ at a point $p\in X_k$ such that $\codim_{X_k}(\overline{\{p\}})=2$, we have two possibilities: either $\Delta_p=0$ or $\Delta_p \neq 0$.
Here $\Delta_p$ (resp.~$D_{i,p}$) denotes the restriction of $\Delta$ (resp.~$D_i$) to $\Spec(\mathcal{O}_{X,p})$.
If $\Delta_p=0$, we have $D_{i,p}=0$ by the uniqueness of the representation and $K_X$ is thus Cartier in a neighbourhood of $p$ by \cite{stacks-project}*{\href{https://stacks.math.columbia.edu/tag/0BJJ}{Tag 0BJJ}} and the fact that canonical surface singularities are Gorenstein (see \cite{kk-singbook}*{Theorem 2.29}).
If $\Delta_p \neq 0$ holds, by \cite{kk-singbook}*{Theorem 2.29}, $X_k$ and therefore $X$ are regular at $p$ and hence $K_X+D_i$ is $\mathbb Q$-Cartier around $p$.
As $K_{X_k}+\Delta_k = \sum d_i (K_{X_k}+D_{i,k})$ is $\mathbb{R}$-Cartier, we have by \cite{k-moduli}*{Claim 11.43.2.(a)} that $K_{X_k}+D_{i,k}$ is $\mathbb{Q}$-Cartier and we denote by $n_i$ its Cartier index
and by $m_i$ the least common multiple between $n_i$ and the Weil index of $D_i$.
As $X_k$ has property $(S_3)$ by \cite{BK20}*{Theorem 3 and 19}, we can apply \cite{dFH11}*{Proposition 3.1} to deduce that $m_i(K_X+D_i)$ is Cartier, thus showing that $K_X+\Delta$ is $\mathbb{R}$-Cartier.
Note that, if $\Delta$ is a $\mathbb Q$-divisor, we have $l=1$ and $K_X+\Delta=K_X+D_1$.
Then, under this assumption, if $K_{X_k}+\Delta_k$ has Cartier index $I_2$, \cite{dFH11}*{Proposition 3.1} implies
that $\mathrm{lcm}(I_1,I_2)(K_X+\Delta)$ is also Cartier, where $I_1$ denotes the Weil index of $\Delta$.
Finally, by inversion of adjunction \autoref{cor: inv-adjunction}, we know that $(X, X_k+\Delta)$ is log canonical (resp.~ plt).

First, we show that $(X,\Delta)$ is canonical.
Arguing by contradiction, we suppose that there exists an exceptional valuation $P$ over $X$ such that $a(P, X, \Delta)<0$.  
By \autoref{lem: lemma_trick}, $\cent_X(P)$ dominates $\Spec(R)$.
Then, let $\pi \colon Z \rightarrow X$ be a birational morphism as in \autoref{cor: existence_extractions}, and set $\pi^*(K_X+X_k+\Delta)=K_Z+\pi^{-1}_*X_k+\Delta_Z$.
Since $Z_k$ is a Cartier divisor, $P_Z \coloneqq \cent_Z(P)$ is $\mathbb Q$-Cartier, and $P_Z \cap Z_k \subset \pi^{-1}_*X_k$, it follows that $P_Z \cap Z_k$ is a divisor in $\pi^{-1}_*X_k$.
Furthermore, by upper semi-continuity of the fibre dimension, it is exceptional for $\pi^{-1}_* X_k \rightarrow X_k$.
Then, by adjunction, the coefficients of $\textup{Diff}_{(\pi^{-1}_*X_k)^\nu}(\Delta_Z)$ along the prime components of $P_Z \cap \pi^{-1}_*X_k$ are positive.
Furthermore, these components are $\pi_k^\nu$-exceptional, where $\nu \colon (\pi^{-1}_*X_k)^\nu \to \pi^{-1}_*X_k$ is the normalisation of $\pi^{-1}_*X_k$ and $\pi_k^\nu \colon (\pi^{-1}_*X_k)^\nu \rightarrow X_k$ is the induced morphism.
Then, since $((\pi^{-1}_*X_k)^\nu,\textup{Diff}_{(\pi^{-1}_*X_k)^\nu}(\Delta_Z))$ is crepant to $(X_k,\Delta_k)$, it follows that $(X_k,\Delta_k)$ is not canonical, and we get the sought contradiction.

Now, assume that $(X_k,\Delta_k)$ is terminal.
Then, $(X,\Delta)$ is canonical by the above paragraph.
Assume by contradiction that $(X,\Delta)$ is not terminal, and let $\phi \colon (W,D_W) \rightarrow (X,\Delta)$ be a terminalisation as in \autoref{cor: existence_terminalisation}.
Then, $\phi$ extracts some exceptional divisorial valuation $Q$ with $a(Q,X,\Delta)=0$.
By \autoref{lem: lemma_trick}, $W_k$ coincides with the strict transform of $X_k$ and $Q_W \coloneqq \cent_W(Q)$ dominates $\Spec(R)$.
Then, we may argue as in the previous paragraph to conclude that the existence of $Q_W$ contradicts the fact that $(X_k,\Delta_k)$ is terminal.

We are left to check that $(X_{\overline{K}}, \Delta_{\overline{K}})$ is canonical (resp.~ terminal). 
In this case, the same proof of \autoref{cor: def_canonical} works.
\end{proof}

\begin{remark}
    We recall that one cannot generalise \autoref{thm: def_canonical}
    to the case of klt singularities as shown by the examples in \cite{Ish18}*{Examples 9.1.7 and 9.1.8}.
\end{remark}

In \cite{SS22}, Sato and Takagi showed that, in characteristic 0, klt and log canonical singularities deform in a valuative sense (see \cite{SS22}*{Definition 2.6}).
In particular, if the canonical divisor of the generic fiber of the deformation is $\mathbb Q$-Cartier, then the generic fiber is klt, with no assumptions on the canonical divisor of the total space.
In the following, we show an analogue of this result for 3-dimensional klt singularities in positive and mixed characteristics.

\begin{theorem} \label{thm: def_klt}
	Let $R$ be an excellent DVR with perfect residue field $k$ of characteristic $p>5$.
	Let $(X, \Delta) \to \Spec(R)$ be a family of $3$-dimensional couples.
	If $(X_k, \Delta_k)$ is klt, then there exists a small morphism $X'_{{K}} \rightarrow X_K$ such that $(X'_{{K}},\Delta'_K)$ is klt, where $\Delta'_K$ denotes the strict transform of $\Delta_K$.
    In particular, if $K_{X_K}+\Delta_K$ is $\mathbb R$-Cartier, $(X_K,\Delta_K)$ is geometrically klt.
 \end{theorem}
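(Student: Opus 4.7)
The plan is to adapt the MMP construction from \autoref{cor: existence_terminalisation} to build the desired small modification, working around the lack of $\mathbb R$-Cartierness of $K_X+\Delta$ by perturbing the log resolution boundary.

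First, I take a projective log resolution $f\colon Y \to X$ of $(X, X_k+\Delta)$ with purely divisorial exceptional locus (using \autoref{hyp} and \cite{KW21}), and decompose $\Exc(f) = E^h + E^v$ into its horizontal and vertical parts (according to whether the image in $X$ dominates $\Spec(R)$ or lies in $X_k$). For a small rational $\varepsilon > 0$, set
\[
B_\varepsilon \coloneqq f^{-1}_*\Delta + Y_{k,\mathrm{red}} + (1-\varepsilon)E^h.
\]
Since $Y$ is snc and $\lfloor B_\varepsilon \rfloor = Y_{k,\mathrm{red}}$, the pair $(Y, B_\varepsilon)$ is dlt, so \autoref{t-compactmmp} provides a $(K_Y+B_\varepsilon)$-MMP over $X$ with scaling that terminates at a relatively minimal model $\rho\colon W \to X$. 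By construction $(W, B_{\varepsilon, W})$ is dlt, $W$ is $\mathbb Q$-factorial, and $K_W + B_{\varepsilon, W}$ is $\rho$-nef.

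Next I would establish that the induced morphism $\rho_K\colon W_K \to X_K$ is small, by showing that every component of $E^h$ is contracted by the MMP on the generic fibre. The strategy is to argue by contradiction: a surviving component of $E^h$ on $W$ would, after passing to the $\varepsilon \to 0$ limit (which preserves the birational model for small enough $\varepsilon$), acquire coefficient $1$ in the resulting dlt pair $(W, f^{-1}_*\Delta + W_{k,\mathrm{red}} + E^h_W)$, and hence become a log canonical place whose centre meets $X_k$. Applying the adjunction argument of \autoref{rmk: inv_of_adj} and \autoref{cor: inv-adjunction} to this dlt modification would then produce a log canonical (but non-klt) centre of $(X_k, \Delta_k)$ through crepant restriction to the strict transform of $X_k$, contradicting the klt-ness of $(X_k, \Delta_k)$.

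Once $\rho_K$ is small, set $X'_K \coloneqq W_K$ and $\Delta'_K \coloneqq f^{-1}_{K*}\Delta_K$. Since $W$ is $\mathbb Q$-factorial, $K_W + f^{-1}_*\Delta$ is $\mathbb Q$-Cartier, and taking $\varepsilon \to 0$ yields that $(W, W_{k,\mathrm{red}} + f^{-1}_*\Delta)$ is dlt. By the same negativity-lemma argument as in \autoref{cor: existence_terminalisation}, the special fibre $W_k$ is reduced and irreducible, and crepant comparison with $(X_k,\Delta_k)$ gives that $(W_k, f^{-1}_*\Delta\vert_{W_k})$ is klt; then \autoref{cor: plt-inv-adj} ensures $(W, W_{k,\mathrm{red}} + f^{-1}_*\Delta)$ is plt, whence $(X'_K, \Delta'_K)$ is klt by restriction to the generic fibre. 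The ``in particular'' clause follows by noting that if $K_{X_K}+\Delta_K$ is $\mathbb R$-Cartier, the small morphism $\rho_K$ is crepant, so $(X_K, \Delta_K)$ is klt, and geometric klt-ness is then obtained via the base-change argument of \autoref{cor: def_canonical}.

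The main obstacle lies in the second step: because $K_X+\Delta$ is not assumed $\mathbb R$-Cartier, standard discrepancy estimates are unavailable, and one must carefully exploit the klt-ness of the special fibre through an adjunction-based contradiction in the limit $\varepsilon \to 0$, tracking which divisors are kept and which are contracted by the semi-stable MMP.
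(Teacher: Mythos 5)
Your overall architecture matches the paper's: build a $\mathbb Q$-factorial dlt-type modification of $(X,X_k+\Delta)$ via a log resolution and the semi-stable MMP of \autoref{t-compactmmp} (the paper packages this as the dlt modification of \autoref{cor: existence-dlt-modificatin}), show it is small, and then read off klt-ness on the generic fibre. However, the step that carries all the content --- showing that no horizontal exceptional divisor survives --- is exactly where your argument does not go through. You propose to derive a contradiction by ``crepant restriction to the strict transform of $X_k$'' via \autoref{rmk: inv_of_adj} and \autoref{cor: inv-adjunction}, but both of those devices presuppose that $K_X+X_k+\Delta$ is $\mathbb R$-Cartier, which is precisely what is \emph{not} assumed in this theorem (and, as you note yourself, is the whole difficulty). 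Without an $\mathbb R$-Cartier divisor on $X$ to pull back, a surviving coefficient-one horizontal divisor on $W$ is not a ``log canonical place'' of anything on $X$, and no contradiction with klt-ness of $(X_k,\Delta_k)$ is produced. The $\varepsilon$-perturbation does not repair this: you would also need to justify that the output of the MMP stabilises as $\varepsilon\to 0$ and that relative nefness survives the limit, neither of which is addressed.

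The paper's mechanism, which your sketch is missing, is the following. First, since $(X_k,\Delta_k)$ is klt, by \cite{kk-singbook}*{Theorem 2.29} it is canonical on a \emph{big open subset} $U_k\subset X_k$; applying \autoref{thm: def_canonical} over a corresponding open $U\subset X$ shows that the modification is crepant and \emph{small} over $U$. This identifies the adjunction boundary $\Delta_{S'}$ on the normalisation $S'$ of the strict transform of $X_k$ as the strict transform of $\Delta_k$ plus $\pi_{S'}$-exceptional divisors. Since $K_{S'}+\Delta_{S'}$ is relatively nef over $X_k$ (by minimality of the model) and $K_{X_k}+\Delta_k$ \emph{is} $\mathbb R$-Cartier (the central fibre is a genuine klt pair), the negativity lemma applies on $S'\to X_k$ --- not on $X$ --- and gives $K_{S'}+\Delta_{S'}\le \pi_{S'}^*(K_{X_k}+\Delta_k)$, so $(S',\Delta_{S'})$ is klt. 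Only then does the coefficient-one contribution of any exceptional divisor meeting the strict transform of $X_k$ yield a contradiction, forcing the modification to be small. Your proof needs this detour through the central fibre (where an $\mathbb R$-Cartier log canonical divisor is available) and through \autoref{thm: def_canonical}; as written, the contradiction step has no valid divisor to compare against. Relatedly, your appeal to ``the same negativity-lemma argument as in \autoref{cor: existence_terminalisation}'' to get $W_k$ irreducible also silently uses plt-ness of $(X,X_k+\Delta)$, i.e.\ \autoref{cor: plt-inv-adj}, which again requires $K_X+\Delta$ to be $\mathbb R$-Cartier.
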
 

\begin{proof}
    By \autoref{rem: normality_family}, $X$ is normal.
    Let $\pi \colon (X',X'_{k,\red}+\Delta') \rightarrow (X,X_k+\Delta)$ be a dlt modification as in \autoref{cor: existence-dlt-modificatin}.
    We observe that, if $\pi$ is small, then the claim follows.

    Thus, in the following, we are left with showing that $\pi$ is small.
    We define $S'$ to be the normalisation of $\pi^{-1}_*X_k$, and we let $(S',\Delta_{S'})$ denote the pair structure induced by $(X',X'_{k,\red}+\Delta')$.
    As $(X',X'_{k,\red}+\Delta')$ is dlt, $S' \rightarrow \pi^{-1}_*X_k$ is a universal homeomorphism by \cite{HW20}*{Lemma 2.1}.

    Since $(X_k,\Delta_k)$ is klt, by localising at codimension 1 points of $X_k$, we may apply \cite{kk-singbook}*{Theorem 2.29} to conclude that there exists a big open subset $U_k \subset X_k$ such that $(U_k,\Delta_k|_{U_k})$ is canonical.
    Then, by \autoref{thm: def_canonical}, it follows that $S' \rightarrow X_k$ is crepant over $U_k$.
    Indeed, let $U$ be an open subset of $X$ such that $U \cap X_k=U_k$.
    By \autoref{thm: def_canonical}, $(U,\Delta|_U)$ is canonical.
    Then, by \autoref{def: dlt-modification} and the negativity lemma \cite{7authors}*{Lemma 2.16}, $\pi$ is crepant and small over $U$.
    In particular, the special fibre of $U' \rightarrow \Spec(R)$ is irreducible, where $U'$ denotes the preimage of $U$ via $\pi$.
    Hence, $U'_k$ is the pull-back of $U_k$, and it follows that $(U',X'_{k,\red}|_{U'}+\Delta'|_{U'})$ is crepant to $(U,U_k+\Delta|_U)$.
    Then, by adjunction, $(U'_k,\Delta_{S'}|_{U'_k})$ is crepant to $(U_k,\Delta_k|_{U_k})$.

    In particular, it follows that $\Delta_{S'}$ is the sum of the strict transform of $\Delta_k$ and some $\pi_{S'}$-exceptional divisors, where we set $\pi_{S'} \colon S' \rightarrow X_k$.
    Then, as $(K_{S'}+\Delta_{S'})$ is $\pi_{S'}$-nef by construction, by the negativity lemma it follows that $K_{S'}+\Delta_{S'} \leq \pi^*_{S'}(K_{X_k}+\Delta_k)$.
    Since $(X_k,\Delta_k)$ is klt, then so is $(S',\Delta_{S'})$.

    Therefore, we conclude that $\pi^{-1}_*X_k$ does not intersect any $\pi$-exceptional divisor.
    Indeed, by definition of dlt modification, every $\pi$-exceptional divisor has coefficient 1 in $X'_{k,\red}+\Delta'$.
    Thus, by adjunction and $\mathbb{Q}$-factoriality of $X'$, any such divisor would contribute with a divisor with coefficient 1 in $\Delta_{S'}$, which is impossible.
    Then, it follows that $X'_k=\pi^{-1}_*X_k$.
    Similarly, no exceptional divisor can dominate $\Spec(R)$, as it would otherwise intersect $X'_k=\pi^{-1}_*X_k$.
    Then, the claim follows, and we may conclude.

    We are left to check that $(X_{\overline{K}}, \Delta_{\overline{K}})$ is klt. 
    In this case, the same proof of \autoref{cor: def_canonical} works by replacing canonical with klt.
\end{proof}

For completeness we include the 2-dimensional case (for the case of strongly $F$-regular, cf. \cite{ST21}).

\begin{theorem}\label{thm: def_klt_surfaces}
    Let $R$ be an excellent DVR
    and let $f \colon (X,\Delta) \to \Spec(R)$ be a family of 2-dimensional couples.
    If $(X_k, \Delta_k)$ is klt, then so is $(X_K,\Delta_K)$.
    Moreover, if $K_{X_k}+\Delta_k$ is geometrically klt, then so is $(X_K,\Delta_K)$. 
\end{theorem}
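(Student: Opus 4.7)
The plan is to adapt the strategy of \autoref{prop: def_can_surfaces} and \autoref{thm: def_klt} to the klt surface setting: first I establish that $K_X+\Delta$ is $\mathbb R$-Cartier, and then I invoke inversion of adjunction for $3$-folds to transfer the klt property to the generic fibre.

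By \autoref{rem: normality_family}, $X$ is normal and satisfies $(S_3)$, while $X_k$ is a normal surface, hence automatically Cohen--Macaulay (so, in particular, $(S_3)$, trivially). To establish $\mathbb R$-Cartierness of $K_X+\Delta$, I mimic the opening of the proof of \autoref{thm: def_canonical}. Write $\Delta=\sum_{i=1}^m b_i B_i$ and pick a basis $(d_1,\dots,d_l)$ of $\mathrm{Span}_{\mathbb Q}(1,b_1,\dots,b_m)$ with $d_i>0$ and $\sum_i d_i=1$, so that $\Delta=\sum_i d_i D_i$ for uniquely determined $\mathbb Q$-divisors $D_i$. Since $(X_k,\Delta_k)$ is klt, hence a pair, \cite{k-moduli}*{Claim 11.43.2.(a)} implies that each $K_{X_k}+D_{i,k}$ is $\mathbb Q$-Cartier; denote its Cartier index by $n_i$ and let $m_i$ be the least common multiple of $n_i$ with the Weil index of $D_i$. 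Applying \cite{dFH11}*{Proposition 3.1} (which requires only the $(S_3)$ property of $X_k$, automatic in dimension $2$ and hence not requiring the \cite{BK20} input needed in the 3-dimensional case) shows that $m_i(K_X+D_i)$ is Cartier. Summing, $K_X+\Delta$ is $\mathbb R$-Cartier.

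With $K_X+\Delta$ being $\mathbb R$-Cartier, inversion of adjunction for $3$-folds (\cite{k-notQfact}*{Corollary 10}, as used in the proof of \autoref{prop: def_can_surfaces}) applied to the pair $(X,X_k+\Delta)$ yields that it is plt, and therefore the generic fibre $(X_K,\Delta_K)$ is klt. For the geometric statement, I repeat the Frobenius base-change strategy of \autoref{cor: def_canonical}: for each $e\ge 0$, base-change along the Frobenius morphism $\Spec(R^{1/p^e})\to\Spec(R)$ to obtain a family of couples whose special fibre $(X_{k^{1/p^e}},\Delta_{k^{1/p^e}})$ is klt by the geometric klt assumption on $(X_k,\Delta_k)$; by the first part of the statement, the corresponding generic fibre $(X_{K^{1/p^e}},\Delta_{K^{1/p^e}})$ is klt, and \cite{kk-singbook}*{Proposition 2.15} then yields that $(X_K,\Delta_K)$ is geometrically klt.

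The main technical obstacle is the $\mathbb R$-Cartierness of $K_X+\Delta$: this combines the $(S_3)$ property of $X_k$ with the extension theorem \cite{dFH11}*{Proposition 3.1} and the rational-basis decomposition used in the 3-dimensional analogue. The remaining ingredients, namely inversion of adjunction for $3$-folds and the Frobenius base-change argument for geometric klt, are essentially direct applications of tools already developed earlier in the paper.
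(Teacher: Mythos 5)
Your first step --- establishing that $K_X+\Delta$ is $\mathbb R$-Cartier --- is not just unproven, it is false in general, and this is fatal to the proposed argument. The standard counterexample is the cone $X_k=\frac{1}{4}(1,1)$ over the rational normal quartic curve: it is a klt (quotient) surface singularity, its versal deformation space has two components, and for a one-parameter smoothing taken inside the larger (non-$\mathbb Q$-Gorenstein) component the total space $X$ has $K_X$ not $\mathbb Q$-Cartier, even though both $K_{X_k}$ and $K_{X_K}$ are. The precise point where your argument breaks is the invocation of \cite{dFH11}*{Proposition 3.1}: besides the $(S_3)$ hypothesis on $X_k$, that proposition requires $K_X+D_i$ to already be $\mathbb Q$-Cartier away from a closed subset $Z$ with $\cod_{X_k}(Z\cap X_k)\geq 3$. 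In the proof of \autoref{thm: def_canonical} this hypothesis is secured by localising at codimension-$2$ points of the $3$-fold $X_k$, where the canonical assumption forces the local surface singularity to be Gorenstein (or regular). For $2$-dimensional fibres the analogous bad locus consists of the singular closed points of $X_k$, which have codimension exactly $2$, so the requirement $\cod_{X_k}(Z\cap X_k)\geq 3$ forces $Z\cap X_k=\emptyset$ --- i.e.\ you would need $K_X+D_i$ to be $\mathbb Q$-Cartier near every point of $X_k$, which is precisely what you are trying to prove. Note also that the klt hypothesis, unlike the canonical one, gives you no Gorenstein-type control at these points. This is exactly the phenomenon flagged in the paper's remark citing \cite{Ish18}*{Examples 9.1.7 and 9.1.8}, and it is why the statement of the theorem asserts only that the generic fibre is klt and makes no claim that $(X,X_k+\Delta)$ is plt.

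The paper's actual proof avoids the $\mathbb R$-Cartier issue entirely: it takes a dlt modification of the possibly non-$\mathbb Q$-Gorenstein pair $(X,X_k+\Delta)$ (using \cite{k-notQfact}*{Theorem 9} for threefolds, which is why no restriction on the residue characteristic is needed), runs the argument of \autoref{thm: def_klt} to show the modification is small, and then uses that a small morphism between normal surfaces is an isomorphism to conclude that $(X_K,\Delta_K)$ itself is klt. Your Frobenius base-change argument for the geometric statement is fine and matches \autoref{cor: def_canonical}, but it rests on the first part, so the proposal as a whole does not stand.
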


\begin{proof}
    Using the existence of (not-necessarily $\mathbb{Q}$-factorial) dlt modifications \cite{k-notQfact}*{Theorem 9} for threefolds, the proof proceeds as in the proof of \autoref{thm: def_klt}, and it uses the fact that every small morphism between normal surfaces is an isomorphism.

    The last assertions follows from the same proof of \autoref{cor: def_canonical}.
\end{proof}

\subsection{Applications to families of Fano varieties and minimal models} \label{ss-deformations-Fano}

We present applications of our results to families and liftings of 3-dimensional terminal Fano varieties and minimal models.

\begin{proposition} \label{cor: lifting_Fano}
    Let $(X,\Delta) \to \Spec(R)$ be a projective family of 3-dimensional couples where $\Delta$ is a $\bQ$-divisor.
    Assume that $(X_k,\Delta_k)$ is a weak Fano pair with terminal (resp. ~canonical) singularities. Then $(X,\Delta)$ is terminal (resp. ~canonical) and $(X_K,\Delta_K)$ is a weak Fano pair with geometrically terminal (resp. ~canonical) singularities. Moreover, $-(K_X+\Delta)$ is big and semi-ample over $\Spec(R)$.
\end{proposition}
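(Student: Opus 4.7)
The plan is to first invoke \autoref{thm: def_canonical} to control the singularities of $(X,\Delta)$ and of the generic fibre, and then to verify the relative nefness, bigness, and semi-ampleness of $-(K_X+\Delta)$ by combining flatness on the special fibre with a relative base-point free theorem for $4$-folds.

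The first step is immediate: \autoref{thm: def_canonical} applied to $(X,\Delta)\to\Spec(R)$ yields that $(X,\Delta)$ is terminal (resp.\ canonical), that $K_X+\Delta$ is $\bQ$-Cartier, and that $(X_K,\Delta_K)$ is geometrically terminal (resp.\ canonical).

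Next, to see that $(X_K,\Delta_K)$ is a weak Fano pair, I would argue as follows. Given any closed curve $C\subset X_K$, let $\bar{C}\subset X$ denote its scheme-theoretic closure; since $\bar{C}$ is integral and dominates $\Spec(R)$, the morphism $\bar{C}\to\Spec(R)$ is flat of relative dimension $1$. By constancy of degrees in flat families of curves,
\[
\deg_C\!\bigl(-(K_{X_K}+\Delta_K)|_C\bigr)=\deg_{\bar{C}_k}\!\bigl(-(K_{X_k}+\Delta_k)|_{\bar{C}_k}\bigr)\ge 0,
\]
where the last inequality is the nefness of $-(K_{X_k}+\Delta_k)$ paired against the effective $1$-cycle $\bar{C}_k$. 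This shows nefness of $-(K_{X_K}+\Delta_K)$ on $X_K$. By the same flatness principle for intersection numbers one obtains $\bigl(-(K_{X_K}+\Delta_K)\bigr)^3=\bigl(-(K_{X_k}+\Delta_k)\bigr)^3>0$, and combined with nefness this yields bigness on the $3$-fold $X_K$.

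Finally, for the relative statement, the same Mori cone argument shows that $-(K_X+\Delta)$ is $\pi$-nef, since $\NE(X/\Spec(R))$ is spanned by classes of proper curves contracted to the unique closed point of $\Spec(R)$, all of which lie in $X_k$. Relative bigness is equivalent to bigness on the generic fibre, verified above. Relative semi-ampleness would then be deduced from a base-point free theorem applied to the nef and big $\bQ$-Cartier divisor $-(K_X+\Delta)$ on the klt $4$-dimensional pair $(X,\Delta)$ over $\Spec(R)$, as provided by the MMP machinery for $4$-folds in mixed and positive characteristic developed in \cites{HW20, XX22}. This last step is where I expect the main obstacle to lie: since the available base-point free statements are typically formulated under $\bQ$-factoriality or semi-stability hypotheses, some preliminary reduction, for instance via a small $\bQ$-factorialisation of $X$ or a dlt modification of $(X,X_k+\Delta)$ in the spirit of \autoref{cor: existence-dlt-modificatin}, may be required before the theorem can be applied and the semi-ampleness then descended to $X$.
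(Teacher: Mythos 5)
Your first three steps coincide with the paper's proof: \autoref{thm: def_canonical} gives that $(X,\Delta)$ is terminal (resp.\ canonical) with $K_X+\Delta$ $\bQ$-Cartier and that $(X_K,\Delta_K)$ is geometrically terminal (resp.\ canonical); nefness of $-(K_{X_K}+\Delta_K)$ is obtained by specialising an integral curve $C_K$ to an effective $1$-cycle $C_k\subset X_k$ and using constancy of intersection numbers; and bigness follows from $(-(K_{X_K}+\Delta_K))^3=(-(K_{X_k}+\Delta_k))^3>0$ together with nefness on a $3$-fold.

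The divergence, and the genuine gap, is in your final step. You propose to apply a relative base-point free theorem to the nef and big $\bQ$-Cartier divisor $-(K_X+\Delta)$ on the klt \emph{four-dimensional} total space over $\Spec(R)$. No such theorem is provided by \cites{HW20, XX22}: the base-point free statements there (e.g.\ \cite{XX22}*{Proposition 4.4 and Remark 4.5}) are established only in the semi-stable/birational setting, that is, for models mapping to a birational base $X$, and they do not cover a Fano-type (non-birational) contraction of a $4$-fold over a DVR. The reductions you suggest (a small $\bQ$-factorialisation or a dlt modification as in \autoref{cor: existence-dlt-modificatin}) do not remove this obstruction, since the issue is not $\bQ$-factoriality but the outright absence of a $4$-dimensional base-point free theorem over an affine base in positive and mixed characteristic. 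The paper avoids the problem by never leaving dimension $3$: it applies the base-point free theorem \cite{DW22}*{Theorem 1.4} fibrewise to conclude that $-(K_{X_K}+\Delta_K)$ and $-(K_{X_k}+\Delta_k)$ are semi-ample on the $3$-dimensional fibres, and then invokes \cite{Wit21}*{Theorem 1.2} to upgrade fibrewise semi-ampleness to semi-ampleness of $-(K_X+\Delta)$ over $\Spec(R)$. Replacing your last paragraph with this fibrewise-plus-\cite{Wit21} argument closes the gap.
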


\begin{proof}
    As $(X, \Delta)$ is terminal and $(X_K, \Delta_K)$ is geometrically terminal (resp.~ canonical) by  \autoref{thm: def_canonical}, $-(K_{X_K}+\Delta_K)$ is $\mathbb{Q}$-Cartier.
    We are left to check $-(K_{X_K}+\Delta_K)$ is big and nef.
    Let $C_K$ be an effective integral curve on $X_K$, which specialises to an effective (possibly non-integral) curve $C_k \subset X_k$.
    As $-(K_{X_K}+\Delta_K) \cdot_K C_K = -(K_{X_k}+\Delta_k) \cdot_k C_k \geq 0$, we conclude $-(K_{X_K}+\Delta_K)$ is nef.
    Moreover, it is big as $(-K_{X_K}-\Delta_K)^3=(-K_{X_k}-\Delta_k)^3>0$.
    By the base-point free theorem \cite{DW22}*{Theorem 1.4} $-(K_{X_K}+\Delta_K)$ and $-(K_{X_k}+\Delta_k)$ are semi-ample, thus the last assertion follows from \cite{Wit21}*{Theorem 1.2}.
\end{proof}

\begin{corollary} \label{cor: fano}
    Let $X \to \Spec(R)$ be a projective family of 3-dimensional varieties.
    Assume that $X_k$ is a weak Fano variety with canonical singularities.
    Moreover, suppose that $\characteristic(R)=0$.
    Then, we have $1/330 \leq \mathrm{vol}(X_k,-K_{X_k}) \leq 324$, $h^0(X_k,-6K_{X_k}) > 0$, $h^0(X_k,-8K_{X_k}) > 1$, and $|-rK_{X_k}|$ is birational for every $r \geq 97$.
\end{corollary}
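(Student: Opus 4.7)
The plan is to reduce each of the four assertions on $X_k$ to its known characteristic-zero counterpart on the generic fiber $X_K$, and then transfer back. By \autoref{cor: lifting_Fano}, $(X_K, -K_{X_K})$ is a geometrically canonical weak Fano 3-fold over the characteristic-zero field $K$, and $-K_X$ is $\mathbb{Q}$-Cartier, big, and semi-ample over $\Spec(R)$. The characteristic-zero theory of anticanonical systems of canonical weak Fano 3-folds (due to Chen--Chen, Jiang, and Prokhorov) then supplies on $X_K$ the bounds $1/330 \leq \vol(X_K,-K_{X_K}) \leq 324$, the nonvanishings $h^0(X_K,-6K_{X_K})>0$ and $h^0(X_K,-8K_{X_K})>1$, and birationality of $|{-rK_{X_K}}|$ for every $r\geq 97$.

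The volume bound transfers immediately: since $-K_{X_K}$ and $-K_{X_k}$ are big and nef on 3-folds, their volumes coincide with the top self-intersection, and flatness of $X\to\Spec(R)$ together with the $\mathbb{Q}$-Cartier property of $-K_X$ (from \autoref{thm: def_canonical}) gives $(-K_{X_k})^3=(-K_{X_K})^3$, so the bounds pass to $X_k$. For the cohomological estimates, Kawamata--Viehweg vanishing applied on the characteristic-zero weak Fano $X_K$ yields $h^i(X_K,-rK_{X_K})=0$ for all $i>0$ and $r\geq 0$. Combined with upper semi-continuity of cohomology along the flat family $X\to\Spec(R)$, this produces $h^0(X_k,-rK_{X_k})\geq h^0(X_K,-rK_{X_K})$, which transfers the nonvanishings for $r\in\{6,8\}$ verbatim.

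For the birationality statement, fix $r\geq 97$ with $-rK_X$ Cartier (after replacing by a multiple of the Cartier index if necessary; the characteristic-zero assertion already holds for every $r\geq 97$, so this reduction is harmless). Setting $M \coloneqq \pi_*\mathcal{O}_X(-rK_X)$ where $\pi\colon X\to\Spec(R)$, the vanishings above together with flat base change show that $M$ is a finite free $R$-module with $M\otimes_R K\cong H^0(X_K,-rK_{X_K})$ and a natural injection $M\otimes_R k\hookrightarrow H^0(X_k,-rK_{X_k})$. The relative rational map $\Phi\colon X\dashrightarrow \mathbb{P}_R(M^\vee)$ restricts to $\phi_{|{-rK_{X_K}}|}$ on the generic fiber and to a rational map on the special fiber which factors through $\phi_{|{-rK_{X_k}}|}$. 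Since $\Phi_K$ is birational onto a 3-dimensional image, and the generic degree of $\Phi$ onto its image is constant in the flat family (as can be seen by comparing $(-rK_X)^3$ on both fibers), $\Phi_k$ is also birational, whence so is $|{-rK_{X_k}}|$.

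The main obstacle is this last step: while the volume and $h^0$ transfers are essentially automatic from flatness and semi-continuity, preserving birationality under specialization requires using cohomology and base change to identify the relative linear system on $X$ with the absolute ones on $X_K$ and $X_k$, together with a careful specialization argument for the generic degree of the resulting rational map.
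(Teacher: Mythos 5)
Your overall strategy is the same as the paper's: use \autoref{cor: lifting_Fano} to get a canonical weak Fano generic fibre $X_K$ over the characteristic-zero field $K$, import the known characteristic-zero bounds there, transfer the volume via constancy of $(-K)^3$ in the flat family, transfer the section counts via upper semi-continuity, and transfer birationality of $|-rK|$ by spreading the linear system to a relative rational map over $\Spec(R)$ and comparing its degree on the two fibres. So the architecture is right and matches the paper's proof.

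The one genuine flaw is the reduction, in the birationality step, to those $r$ for which $-rK_X$ is Cartier ``after replacing by a multiple of the Cartier index.'' This is not harmless: it yields birationality of $|-r'K_{X_k}|$ only for $r'$ divisible by the Cartier index of $K_X$, whereas the statement demands it for \emph{every} $r\geq 97$, and birationality of $|-mrK_{X_k}|$ does not imply birationality of $|-rK_{X_k}|$. The fix is to drop the reduction and work directly with the divisorial (reflexive) sheaf $\mathcal{O}_X(-rK_X)$: its pushforward to $\Spec(R)$ is torsion-free over the DVR, hence free; its generic fibre computes $H^0(X_K,-rK_{X_K})$ by localisation; and restriction of sections to $X_k$ defines a sub-linear system of $|-rK_{X_k}|$ --- no cohomology-and-base-change for a line bundle is needed. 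This is what the paper does implicitly. Relatedly, your justification of degree constancy by comparing $(-rK_X)^3$ on the fibres presupposes that the map is a morphism with $-rK$ the pullback of $\mathcal{O}(1)$, which fails when the system has base points; the paper instead restricts $\varphi$ to an open $U$ containing the generic point of $X_k$ on which it is a morphism and uses constancy of the degree there. Finally, two smaller omissions: the cited characteristic-zero results are stated over algebraically closed fields and for terminal weak Fanos, so one should record the descent of birationality from $X_{\overline{K}}$ to $X_K$ (faithfully flat base change) and the passage from canonical to terminal via a crepant terminalisation as in \autoref{cor: existence_terminalisation}, both of which the paper makes explicit.
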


\begin{proof}
By \autoref{cor: lifting_Fano}, $K_X$ is $\mathbb{Q}$-Cartier and therefore $(-K_{X_k})^3=(-K_{X_K})^3$.
The statements regarding the bounds on the volume and the dimension of the space of sections now follow immediately from \autoref{cor: lifting_Fano}, upper semi-continuity of sections, and \cites{MR2456276,JZ21}.

Now, let $r \geq 97$ be an integer.
By \cite{MR3544286}, $|-rK_{X_{\overline{K}}}|$ induces a birational map.
Since the canonical divisor is defined over $K$ and the extension $\overline{K}:K$ is faithfully flat, $|-rK_{X_{K}}|$ induces a birational map.
Then, $|-rK_{X}|$ induces a birational map of $ \varphi \colon X \dashrightarrow Z$ over $\Spec(R)$.
Let $U\subset X$ be an open containing the generic point of $X_k$ and such that $\varphi\vert_U$ is a morphism.
As the degree of a morphism is constant, we conclude the induced map $\varphi|_{X_k}$ is birational.
By upper semi-continuity, $\varphi|_{X_k}$ is defined by a sub-linear system of $|-rK_{X_k}|$, thus showing that $|-rK_{X_k}|$ also induces a birational map.

We observe that some of the references are phrased for weak Fano varieties with terminal singularities;
we observe that they also apply in our setup by taking a crepant terminalisation as in \autoref{cor: existence_terminalisation}; see also \cite{MR3544286}*{Remark 1.9}.
\end{proof}

\begin{corollary}\label{cor: lift_MM_char0}
        Let $(X,\Delta)\to \Spec (R)$ be a projective family of 3-dimensional couples.
        Assume that $\characteristic(R)=0$ and $\Delta$ is a $\bQ$-divisor.
        Assume that
    \begin{enumerate}
        \item $(X_k,\Delta_k)$ is a terminal (resp.~ canonical) minimal model; or
        \item $(X_k,\Delta_k)$ is a klt minimal model and $K_X+\Delta$ is $\bQ$-Cartier.
    \end{enumerate}
    Then $(X_k,\Delta_k),(X_{\overline{K}},\Delta_{\overline{K}})$ and $(X,\Delta)$ are good minimal models, with terminal (resp. canonical) singularities in case (a), and with klt singularities in case (b). In particular $\kappa(K_{X_k}+\Delta_k)=\kappa(K_{X_K}+\Delta_K)$.
\end{corollary}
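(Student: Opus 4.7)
The plan is to separate the singularity statements from the semi-ampleness statement, and reduce the latter via specialization to the classical abundance theorem in characteristic $0$. In case (a), \autoref{thm: def_canonical} directly gives that $(X,\Delta)$ is terminal (resp.~ canonical) and $(X_{\overline{K}},\Delta_{\overline{K}})$ is geometrically terminal (resp.~ canonical); in particular, $K_X+\Delta$ is automatically $\bQ$-Cartier. In case (b), since $K_X+\Delta$ is $\bQ$-Cartier by hypothesis, \autoref{cor: plt-inv-adj} gives that $(X,X_k+\Delta)$ is plt and $(X_{\overline{K}},\Delta_{\overline{K}})$ is klt; as $X_k$ is a reduced Cartier divisor disjoint from $\Supp(\Delta)$ as a divisorial component, removing it from the boundary can only increase discrepancies by a nonnegative amount, so $(X,\Delta)$ is itself klt.

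Next I would show that $K_{X_K}+\Delta_K$ is nef. For any effective integral curve $C_K\subset X_K$, let $\overline{C}\subset X$ denote its scheme-theoretic closure; then $\overline{C}\to\Spec(R)$ is flat and its special fibre $C_k$ is an effective $1$-cycle on $X_k$. Since $K_X+\Delta$ is $\bQ$-Cartier, flat specialization of intersection numbers yields
\[
(K_{X_K}+\Delta_K)\cdot C_K = (K_{X_k}+\Delta_k)\cdot C_k \geq 0,
\]
as $(X_k,\Delta_k)$ is a minimal model. Hence $(X_K,\Delta_K)$ is itself a klt minimal model over the characteristic $0$ field $K$, and the classical abundance theorem for $3$-dimensional klt pairs in characteristic $0$ yields that $K_{X_K}+\Delta_K$ is semi-ample, so $\kappa(K_{X_K}+\Delta_K)=\nu(K_{X_K}+\Delta_K)$.

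The key step is to transfer abundance to the special fibre. Constancy of intersection numbers in the flat family $X\to\Spec(R)$ gives $\nu(K_{X_k}+\Delta_k)=\nu(K_{X_K}+\Delta_K)$, while upper semi-continuity of $h^0$ yields $\kappa(K_{X_k}+\Delta_k)\geq\kappa(K_{X_K}+\Delta_K)$. Combining these with the general inequality $\kappa\leq\nu$ for nef divisors,
\[
\nu(K_{X_k}+\Delta_k) = \kappa(K_{X_K}+\Delta_K) \leq \kappa(K_{X_k}+\Delta_k) \leq \nu(K_{X_k}+\Delta_k),
\]
so all four quantities coincide; in particular, $\kappa(K_{X_k}+\Delta_k)=\kappa(K_{X_K}+\Delta_K)$. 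Thus $K_{X_k}+\Delta_k$ is nef and abundant, and finite generation of the log canonical ring for klt $3$-folds in characteristic $p>5$ (\cite{WalFG}*{Theorem 1.2}) upgrades this to semi-ampleness. Finally, \cite{Wit21}*{Theorem 1.2} lifts semi-ampleness of the special fibre to semi-ampleness of $K_X+\Delta$ over $\Spec(R)$, from which semi-ampleness on $(X_{\overline{K}},\Delta_{\overline{K}})$ follows by flat base change.

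The main obstacle is the bridge between the characteristic $0$ generic fibre and the positive-characteristic special fibre: no direct abundance statement is available on $(X_k,\Delta_k)$, so the argument crucially exploits the hypothesis $\characteristic(R)=0$ to import abundance from $(X_K,\Delta_K)$ and then squeeze it onto $X_k$ via $\kappa\leq\nu$, semi-continuity of sections, and constancy of intersection numbers; an analogous strategy in the equi-characteristic $p$ setting would immediately break down at this step.
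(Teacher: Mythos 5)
Your proposal is correct and follows essentially the same route as the paper: establish the singularity statements via \autoref{thm: def_canonical} (case (a)) or \autoref{cor: plt-inv-adj} (case (b)), deduce nefness of $K_{X_K}+\Delta_K$ by specializing curves, apply characteristic-zero abundance on the generic fibre, and then squeeze $\kappa=\nu$ onto the special fibre using constancy of intersection numbers and semi-continuity of sections before invoking finite generation and \cite{Wit21}*{Theorem 1.2}. The only cosmetic difference is that the paper additionally cites \cite{MR97}*{Corollary 1} alongside \cite{WalFG}*{Theorem 1.2} to pass from abundance to semi-ampleness on $X_k$.
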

\begin{proof}
    We first assume (a).
    By~\autoref{thm: def_canonical}, we have that $(X_{\overline{K}},\Delta_{\overline{K}})$ and $(X,\Delta)$ are both terminal (resp.~ canonical).
    In particular, $K_X+\Delta$ is $\bQ$-Cartier.
    By arguing as in the proof of \autoref{cor: lifting_Fano}, we have that $K_{X_K}+\Delta_K$ is nef as well, hence it is also semi-ample by the abundance theorem for 3-folds in characteristic zero. Recall that, in order to show that $K_{X_k}+\Delta_k$ is semi-ample, by \cite{MR97}*{Corollary 1} it is enough to show it is abundant (i.e., $\kappa(K_{X_k}+\Delta_k)=\nu(K_{X_k}+\Delta_k)$) by finite generation of the canonical ring; see \cite{WalFG}*{Theorem 1.2}.
    As intersection numbers are constant in a flat family, we have $\nu(K_{X_k}+\Delta_k)=\nu(K_{X_K}+\Delta_K)$ and by upper semi-continuity of global sections we have the following chain of inequalities
    \[
    \kappa(K_{X_k}+\Delta_k) \geq \kappa(K_{X_K}+\Delta_K)=\nu(K_{X_K}+\Delta_K)=\nu(K_{X_k}+\Delta_k),
    \] 
    showing that $K_{X_k}+\Delta_k$ is abundant.
    By \cite{Wit21}*{Theorem 1.2} we also have that $K_X+\Delta$ is semi-ample over $R$, hence $(X,\Delta)$ is a terminal (resp.~ canonical) good relative minimal model over $R$.

    In case (b), we argue in the same way, replacing \autoref{thm: def_canonical} by \autoref{cor: plt-inv-adj} (resp. \autoref{cor: inv-adjunction}).
\end{proof}

\begin{corollary}\label{cor: lift_MM_charp}
    Let $(X,\Delta)\to \Spec (R)$ be a projective family of 3-dimensional couples.
    Assume that $\characteristic(R)=p>5$ and $\Delta$ is a $\mathbb Q$-divisor.
    Assume that
    \begin{enumerate}
        \item $(X_k,\Delta_k)$ is a terminal (resp.~ canonical) minimal model, $\nu(K_{X_k}+\Delta_k) \neq 1$; or
        \item $(X_k,\Delta_k)$ is a klt minimal model, $K_X+\Delta$ is $\bQ$-Cartier, and $\nu(K_{X_k}+\Delta_k)\neq 1$.
    \end{enumerate}
    Then $(X_k,\Delta_k),(X_{\overline{K}},\Delta_{\overline{K}})$ and $(X,\Delta)$ are good minimal models, with terminal (resp. canonical) singularities in case (a), and with klt singularities in case (b). In particular $\kappa(K_{X_k}+\Delta_k)=\kappa(K_{X_K}+\Delta_K)$.
\end{corollary}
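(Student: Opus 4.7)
The strategy is to follow verbatim the structure of \autoref{cor: lift_MM_char0}, replacing the characteristic zero abundance theorem by the known special cases of abundance for 3-folds in characteristic $p>5$. The hypothesis $\nu(K_{X_k}+\Delta_k)\neq 1$ is precisely what ensures that a usable abundance statement is available in our setting.

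First, I would appeal to \autoref{thm: def_canonical} in case (a), and to \autoref{cor: plt-inv-adj} together with \autoref{thm: def_klt} in case (b), to deduce that $(X,\Delta)$ and $(X_{\overline{K}},\Delta_{\overline{K}})$ carry the claimed singularities. In particular, $K_X+\Delta$ is $\mathbb{Q}$-Cartier, and so is $K_{X_K}+\Delta_K$. Nefness of $K_{X_K}+\Delta_K$ then follows as in \autoref{cor: lifting_Fano}: any integral curve in $X_K$ specialises to an effective 1-cycle in $X_k$, and since intersection numbers are constant in a flat family, nefness is inherited from the special fibre. Moreover $\nu(K_{X_K}+\Delta_K)=\nu(K_{X_k}+\Delta_k)\in\{0,2,3\}$ by flatness.

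Next I would split into cases according to $\nu$ to obtain semi-ampleness of $K_{X_K}+\Delta_K$. If $\nu=3$, the divisor is big and nef, so semi-ampleness follows from the base-point-free theorem \cite{DW22}*{Theorem 1.4}. If $\nu=0$, the non-vanishing result \cite{XZ19}*{Theorem 1.1} forces $K_{X_K}+\Delta_K\sim_{\mathbb Q}0$. The case $\nu=2$ is handled by the corresponding abundance result for 3-folds in characteristic $p>5$ with klt singularities. Once semi-ampleness of $K_{X_K}+\Delta_K$ is known, the argument of \autoref{cor: lift_MM_char0} carries over: constancy of $\nu$ in flat families combined with upper semi-continuity of global sections yields
\[
\kappa(K_{X_k}+\Delta_k)\geq \kappa(K_{X_K}+\Delta_K)=\nu(K_{X_K}+\Delta_K)=\nu(K_{X_k}+\Delta_k),
\]
so $K_{X_k}+\Delta_k$ is abundant and therefore semi-ample by finite generation of the canonical ring \cite{WalFG}*{Theorem 1.2} together with \cite{MR97}*{Corollary 1}. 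Finally \cite{Wit21}*{Theorem 1.2} promotes the fibrewise semi-ampleness statements to semi-ampleness of $K_X+\Delta$ over $\Spec(R)$, completing the proof.

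The main obstacle is the case $\nu=2$, which is the delicate portion of the 3-fold abundance conjecture in positive characteristic; the exclusion $\nu\neq 1$ in the hypotheses reflects exactly which regime of abundance is currently available in characteristic $p>5$. A secondary subtlety is ensuring that all applications of the characteristic-$p$ base-point-free and non-vanishing theorems can be made in our slightly more general klt/canonical setting rather than the $\mathbb Q$-factorial terminal setting in which they are sometimes phrased, but this can always be arranged by passing to a terminalisation as in \autoref{cor: existence_terminalisation} and descending the resulting linear series.
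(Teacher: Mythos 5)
Your proposal is correct and follows essentially the same route as the paper: the proof there simply runs the argument of \autoref{cor: lift_MM_char0} verbatim and substitutes the characteristic-$p$ abundance results valid for $\nu\neq 1$. The only difference is bookkeeping of citations: where you leave the $\nu=2$ case to an unspecified ``corresponding abundance result,'' the paper pins down the full $\nu\neq 1$ range by invoking \cite{WalFG}*{Theorem 1.3}, \cite{DW22}*{Theorem 1.4}, and \cite{WitCBF}*{Theorem 3}.
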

\begin{proof}
    The proof follows the exact same argument as that of \autoref{cor: lift_MM_char0}.
    Note that, when $\nu(K_{X_K}+\Delta_K)\neq 1$, the abundance theorem for $(X_K,\Delta_K)$ holds by \cite{WalFG}*{Theorem 1.3}, \cite{DW22}*{Theorem 1.4}, and \cite{WitCBF}*{Theorem 3}.
\end{proof}

\begin{corollary} \label{cor: volume_gen_type}
    Let $X \to \Spec(R)$ be a projective family of 3-dimensional varieties.
    Assume that $X_k$ is canonical and $K_{X_k}$ is big.
    Moreover, suppose that $\characteristic(R)=0$.
    Then, we have $\mathrm{vol}(X_k,K_{X_k}) \geq 1/1680$ and $|rK_{X_k}|$ is birational for $r \geq 57$.
\end{corollary}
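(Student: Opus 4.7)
The plan is to mirror the argument of \autoref{cor: fano}, with the Fano input replaced by the explicit bounds of Chen--Chen and their subsequent refinements (cf.~\cite{MR2456276,JZ21,MR3544286}) for projective canonical $3$-folds of general type over algebraically closed fields of characteristic $0$: namely $\mathrm{vol}\geq 1/1680$ and birationality of $|rK|$ for every $r\geq 57$.

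By \autoref{thm: def_canonical}, $K_X$ is $\bQ$-Cartier with the same Cartier index as $K_{X_k}$, and $X_{\overline{K}}$ is canonical. The core step is to transfer bigness of $K_{X_k}$ to $K_{X_{\overline{K}}}$, together with the volume identity $\mathrm{vol}(X_k,K_{X_k}) = \mathrm{vol}(X_{\overline{K}},K_{X_{\overline{K}}})$. I would achieve this by running a relative $K_X$-MMP over $\Spec(R)$, combining the $3$-fold MMP in characteristic $p>5$ applied to $X_k$ with BCHM applied to $X_K$ and the $4$-fold MMP framework of \autoref{s-MMP-input}. This would produce a relative minimal model $Y\to\Spec(R)$ whose fibres $Y_k$ and $Y_{\overline{K}}$ are minimal models of $X_k$ and $X_{\overline{K}}$ respectively. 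Since top self-intersections of relatively $\bQ$-Cartier divisors are constant in flat families and the volume coincides with the top self-intersection on a minimal model, one obtains
\[
\mathrm{vol}(X_k,K_{X_k})=(K_{Y_k})^3=(K_{Y_{\overline{K}}})^3=\mathrm{vol}(X_{\overline{K}},K_{X_{\overline{K}}}),
\]
whence $K_{X_{\overline{K}}}$ is big and the Chen--Chen bound applied to $X_{\overline{K}}$ yields the desired $1/1680$ lower bound on $\mathrm{vol}(X_k,K_{X_k})$.

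For the birationality statement, the argument of \autoref{cor: fano} applies with minimal modifications. By Chen--Chen, $|rK_{X_{\overline{K}}}|$ induces a birational map for every $r\geq 57$; by faithfully flat descent the same holds for $|rK_{X_K}|$, and spreading out produces a rational map $\varphi\colon X\dashrightarrow Z$ over $\Spec(R)$ defined by $|rK_X|$ (after passing to a Cartier multiple if necessary). Restricting to an open $U\subset X$ containing the generic point of $X_k$ makes $\varphi|_U$ a morphism whose degree is $1$ by constancy in the flat family. Upper semi-continuity of global sections then shows that $\varphi|_{X_k}$ is defined by a sub-linear system of $|rK_{X_k}|$, so $|rK_{X_k}|$ itself induces a birational map.

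The principal obstacle is the construction of the relative minimal model $Y\to\Spec(R)$ in the mixed characteristic setting, together with the verification that its fibres coincide with the minimal models of $X_k$ and of $X_{\overline{K}}$. This requires carefully combining the $3$-fold MMP in characteristic $p>5$ on the special fibre, classical BCHM on the generic fibre, and the $4$-fold MMP tools developed in \autoref{s-MMP-input}, while tracking the fibrewise behaviour of flips and divisorial contractions; an alternative route would be to work directly with relative finite generation of the canonical ring, but this also rests on nontrivial MMP input in mixed characteristic.
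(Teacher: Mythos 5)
Your proposal follows the same overall strategy as the paper: reduce to the case where the special fibre is a minimal model via a relative MMP over $\Spec(R)$, transfer bigness and the volume to the generic fibre by constancy of intersection numbers, and then invoke the characteristic-zero bounds for canonical $3$-folds of general type. The one step you flag as the principal obstacle --- constructing the relative minimal model and controlling its fibres --- is precisely where the paper's proof does its work, and it closes it as follows. Since $(X,X_k)$ is plt by \autoref{thm: def_canonical}, one runs a $(K_X+X_k)$-MMP with scaling over $\Spec(R)$ using \cite{XX22}*{Theorem 1.1} (note $K_X+X_k\equiv_{\Spec(R)}K_X$, so this is your relative $K_X$-MMP); by \cite{Bri22}*{Lemma 2.18} this restricts to a sequence of $K_{X_k}$-negative contractions on the special fibre, and plt-ness of $(Y,Y_k)$ forces $Y_k$ to be normal, hence canonical, crepant to $X_k$, and with the same pluricanonical ring. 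Crucially, the paper then does \emph{not} attempt to verify that the generic fibre of $Y$ is a minimal model of $X_{\overline{K}}$ by tracking flips fibrewise, which is the delicate point you identify: after relabelling so that $X_k$ is itself a minimal model, it simply invokes the already-established \autoref{cor: lift_MM_char0}, which delivers the minimal model structure and abundance on the generic fibre, and hence bigness of $K_{X_K}$ together with $K_{X_K}^3=K_{X_k}^3=\mathrm{vol}(X_k,K_{X_k})$. The remainder of your argument --- the Chen--Chen-type bounds over $\overline{K}$, faithfully flat descent, and the spreading-out argument for birationality of $|rK_{X_k}|$ as in \autoref{cor: fano} --- matches the paper.
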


\begin{proof}
By \autoref{thm: def_canonical}, $X$ is canonical and $(X,X_k)$ is plt.
By \cite{XX22}*{Theorem 1.1}, we may run a $(K_X+X_k)$-MMP with scaling over $\Spec(R)$, which terminates with a model $Y$.
By \cite{Bri22}*{Lemma 2.18}, this MMP restricts to a sequence of $K_{X_k}$-negative contractions on the special fibre.
Since $(X,X_k)$ is plt, then so is $(Y,Y_k)$.
Then, $Y_k$ is normal by \cite{HW20}*{Lemma 6.3} and hence canonical and crepant to $X_k$.
Since $K_{X_k}$ is big, then so is $K_{Y_k}$.
Thus, $Y$ is a minimal model.
Since the pluricanonical ring of $X_k$ coincides with the one of $Y_k$, up to relabelling, we may then assume that $X_k$ is a minimal model.
Then, by \autoref{cor: lift_MM_char0}, also $X_K$ is a minimal model of general type.
Since $K$ has characteristic 0, by \cites{MR3357178,MR3824565}, we have $K_{X_K}^3=\mathrm{vol}(X_K,K_{X_K}) \geq 1/1680$ and $|rK_{X_K}|$ is birational for $r \geq 57$.
Then, as $K_{X_K}^3=K_{X_k}^3$, we have $\mathrm{vol}(X_k,K_{X_k}) \geq 1/1680$.

The last part of the statement follows from \cite{MR3357178} arguing as in the last step of the proof of \autoref{cor: fano}.
\end{proof}

We conclude by showing that being a weak Fano 3-fold or a minimal model with terminal or canonical singularities is an open condition in the Zariski topology. 

\begin{lemma}\label{lem: spread_out}
Let $S$ be a Dedekind domain whose closed points have perfect residue field of characteristic $p>5$ and let $(X, \Delta) \to \Spec(S)$ be a family of 3-dimensional couples, where $\Delta$ is a $\mathbb{Q}$-divisor.
If $(X_K, \Delta_K)$ is geometrically terminal (resp.~ canonical), then there exists a Zariski open set $U \subset \Spec(S)$ such that $(X_u, \Delta_u)$ is terminal  (resp.~ canonical) for all $u\in U$.
\end{lemma}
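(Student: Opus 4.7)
My plan is to prove the lemma via a spreading-out argument applied to a log resolution of the generic fibre. First, I take a log resolution $f_K \colon Y_K \to X_K$ of $(X_K, \Delta_K)$, which exists since $X_K$ is a $3$-dimensional scheme of finite type over the field $K$. Denoting by $\widetilde{\Delta}_K$ the strict transform of $\Delta_K$ on $Y_K$ and by $E_i^K$ the prime $f_K$-exceptional divisors, this yields
\[
K_{Y_K} + \widetilde{\Delta}_K = f_K^*(K_{X_K}+\Delta_K) + \sum_i a_i E_i^K.
\]
Since $(X_K,\Delta_K)$ is geometrically terminal (resp.~canonical), it is in particular terminal (resp.~canonical) over $K$, and hence $a_i > 0$ (resp.~$a_i \geq 0$) for every $i$.

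Next I apply standard spreading-out principles. After replacing $\Spec(S)$ by a non-empty open subset $U$, we may extend $f_K$ to a proper birational morphism $f \colon Y \to X_U$ over $U$ and, shrinking $U$ further if necessary, arrange that:
\begin{enumerate}
    \item $Y$ is smooth over $U$;
    \item the closures $E_i$ of $E_i^K$ together with the strict transform $\widetilde{\Delta}$ of $\Delta$ on $Y$ form a relatively snc divisor over $U$, so that each fibre $f_u \colon Y_u \to X_u$ is a log resolution of $(X_u,\Delta_u)$;
    \item there exists $N \in \bZ_{>0}$ such that $N(K_{X_U/U}+\Delta)$ is Cartier on $X_U$; this is obtained by spreading out a Cartier representative of $N(K_{X_K}+\Delta_K)$, which exists since $\Delta$ is a $\bQ$-divisor and $(X_K,\Delta_K)$ is a pair.
\end{enumerate}

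The two $\bQ$-Cartier $\bQ$-divisors $K_{Y/U}+\widetilde{\Delta}$ and $f^*(K_{X_U/U}+\Delta)+\sum_i a_i E_i$ on $Y$ then agree on the generic fibre by construction. Their difference is therefore supported on fibres $Y_s$ of $Y\to U$, and since each such $Y_s$ is the pullback of a Cartier divisor from the regular $1$-dimensional base $U$, we have $Y_s|_{Y_u}=0$ for all $u\in U$. Restricting the identity above to an arbitrary closed fibre $Y_u$ then produces
\[
K_{Y_u} + \widetilde{\Delta}_u = f_u^*(K_{X_u}+\Delta_u) + \sum_i a_i E_{i,u},
\]
so on the log resolution $f_u$ all exceptional discrepancies are $a_i > 0$ (resp.~$\geq 0$). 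Consequently $(X_u,\Delta_u)$ is terminal (resp.~canonical) for every $u \in U$, as required.

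The main technical obstacle lies in arranging item (3): one must control the relative $\bQ$-Cartier-ness of $K_{X_U/U}+\Delta$ and its compatibility with restriction to fibres, which relies on $\Delta$ being a $\bQ$-divisor together with general properties of flat families over a regular one-dimensional base. Notably, the argument does not invoke the MMP or vanishing theorems developed earlier in the paper, and log resolutions in dimension $3$ (rather than $4$) suffice, so the assumption $p>5$ is used only for consistency with the framework of the paper.
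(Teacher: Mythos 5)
Your argument is essentially the paper's proof in the case $\charact(K)=0$ (log resolution of the generic fibre, spread out, restrict the discrepancy identity to fibres, citing generic smoothness/EGA~IV.12.1.6), and in that case it is correct modulo routine shrinking of $U$. But there is a genuine gap when $S$ has equal characteristic $p$: there your step ``after shrinking $U$ we may arrange that $Y$ is smooth over $U$'' fails. A log resolution $f_K\colon Y_K\to X_K$ only produces a \emph{regular} scheme $Y_K$, and over the imperfect field $K=\Frac(S)$ (e.g.\ $\bF_p(t)$) regularity is strictly weaker than smoothness, i.e.\ geometric regularity. Generic smoothness is a characteristic-zero phenomenon; in characteristic $p$ one cannot conclude that the nearby fibres $Y_u$ are regular, so the $f_u$ need not be log resolutions and the restricted discrepancy identity does not certify that $(X_u,\Delta_u)$ is terminal (resp.\ canonical). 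Your closing remark that the hypotheses on the residue fields are ``only for consistency with the framework'' is a symptom of this: the perfectness of the residue fields at closed points is exactly what is needed to repair the argument.

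The paper handles this by first base-changing to the perfect closure $K^{1/p^\infty}$, where a log resolution \emph{is} geometrically regular, descending it to a finite level $K^{1/p^e}$ by finite-type considerations, spreading out over $S^{1/p^e}$, running your argument there, and finally using that the closed points of $S$ have perfect residue fields to transfer terminality (resp.\ canonicity) back from $X^{(e)}_t$ to $X_u$. You would need to insert this Frobenius/perfection step to make the equal-characteristic-$p$ case go through. (A minor further point: the vertical divisor by which your two $\bQ$-divisors on $Y$ differ need not be a sum of full fibres, but it is supported over finitely many closed points of $U$, so discarding those points fixes that step.)
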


\begin{proof}
    Without loss of generality, we can suppose $X \to \Spec(S)$ is a morphism of finite type. We show the terminal case, as the canonical one is proven similarly.
    First, suppose that $\characteristic(S)=0$ and let $f \colon Y \to (X, \Delta)$ be a log resolution.
    Note that, as $K_{X_K}+\Delta_K$ is $\bQ$-Cartier, we may assume that so is $K_X+\Delta$, after possibly shrinking $\Spec(S)$.
    Write $K_Y + D= f^*(K_X+\Delta) +F$, where $D$ and $F$ are effective divisors without irreducible components in common. 
    As $(X_K, \Delta_K)$ is terminal, then the $f$-exceptional divisors with non-positive discrepancy are vertical over $S$ and thus we can suppose that the support of $F$ coincides with the divisorial part of $\mathrm{Ex}(f)$.
    As $K$ has characteristic 0, $(Y_K, F_K)$ is geometrically snc.
    Hence, there exists an open set $U \subset \Spec(S)$ such that for all $u \in U$ the morphism $Y_u \to (X_u, \Delta_u)$ is a log resolution by 
    \cite{EGAIV.III}*{Théorème 12.1.6}, thus showing $(X_u, \Delta_u)$ is terminal. 

    Suppose that $\characteristic(S)=p>0$. 
    Let $X_{K^{1/p^\infty}}$ be the base change to the perfect closure of $K$.
    Let $g_\infty \colon Z_\infty \to (X_{K^{1/p^\infty}}, \Delta_{K^{1/p^\infty}})$ be a log resolution. Note $Z_{\infty}$ is geometrically regular as $K^{1/p^\infty}$ is perfect.
    By the hypothesis of finite type, there exists $e>0$ such that there is a log resolution $Z_e \to (X_{K^{1
    /p^e}}, \Delta_{K^{1/p^e}}) $, such that $Z_e \times_{K^{1/p^e}} K^{1/p^\infty} \simeq Z_{\infty} $ and thus $Z_e$ is geometrically regular. 
    Consider the base change $X^{(e)} \to \Spec(S^{1/p^e})$. 
    Up to further localising on the base, we can suppose that $Z_e
    \to (X_{K^{1/p^e}}, \Delta_{K^{1/p^e}})$ spreads to a log resolution $Y \to (X^{(e)}, \Delta^{(e)})$. 
    Following the same proof as in characteristic 0, we deduce that $(X^{(e)}_{t},\Delta^{(e)}_t)$ is terminal for general $t \in  \Spec(S^{1/p^e})$.
    As the residue fields of closed points are perfect, this implies that also $(X_u, \Delta_u)$ is terminal for general $u \in \Spec(S)$.
\end{proof}

\begin{corollary}\label{cor: def_Fano}
Let $S$ be a Dedekind domain whose closed points have perfect residue field of characteristic $p>5$ and let $(X, \Delta) \to \Spec(S)$ be a projective family of couples.

Let $s \in \Spec(S)$ be a closed point such that $(X_s, \Delta_s)$ is a 3-dimensional weak Fano variety with terminal (resp.~ canonical) singularities.
Then, there exists a Zariski open set $U \subset \Spec(S)$ such that $(X_u, \Delta_u)$ is a weak Fano with terminal (resp.~ canonical) singularities for all $u \in U$ . 
\end{corollary}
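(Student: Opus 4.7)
The plan is to reduce to the DVR setting by localising at $s$, invoke \autoref{cor: lifting_Fano} for the full picture over $\Spec(\mathcal{O}_{S,s})$, and then spread the resulting conclusions back to a Zariski open neighbourhood of $s$ in $\Spec(S)$.

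Concretely, setting $R=\mathcal{O}_{S,s}$ and base-changing the family to $\Spec(R)$, \autoref{cor: lifting_Fano} yields that $(X_R,\Delta_R)$ is terminal (resp.~canonical), the generic fibre $(X_K,\Delta_K)$ is weak Fano with geometrically terminal (resp.~canonical) singularities, and $-(K_{X_R}+\Delta_R)$ is big and semi-ample over $\Spec(R)$. The statement about the generic fibre feeds directly into \autoref{lem: spread_out}, producing a Zariski open $U_1\subset\Spec(S)$ containing $s$ over which every fibre of $(X,\Delta)\to\Spec(S)$ is terminal (resp.~canonical).

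The second step is to spread out the weak Fano condition itself. Terminality (resp.~canonicality) of $(X_R,\Delta_R)$ makes $K_{X_R}+\Delta_R$ a $\mathbb{Q}$-Cartier divisor, so for some $m>0$ the Cartier divisor $-m(K_{X_R}+\Delta_R)$ defines, thanks to semi-ampleness over $R$, a projective morphism $\phi_R\colon X_R\to Z_R$ over $\Spec(R)$ and an $R$-ample line bundle $L_R$ on $Z_R$ with $-m(K_{X_R}+\Delta_R)=\phi_R^{*}L_R$. By the standard spreading-out machinery (EGA IV), the Cartier property, the morphism $\phi_R$, and the ampleness of $L_R$ all extend to an open neighbourhood $U_2\subset\Spec(S)$ of $s$. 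Setting $U=U_1\cap U_2$, restriction to a fibre $X_u$ with $u\in U$ gives $-m(K_{X_u}+\Delta_u)=\phi_u^{*}L_u$ with $L_u$ ample, so $-(K_{X_u}+\Delta_u)$ is semi-ample and in particular nef; moreover, since $-m(K_X+\Delta)$ is Cartier on $X|_{U_2}$, the triple self-intersection on fibres is constant in the flat family and hence remains positive near $s$, giving bigness.

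The main delicate point in this argument is the joint spreading-out of the semi-ample model $\phi_R$ together with the ampleness of $L_R$; this is formal once the Cartier index of $K_X+\Delta$ has been spread out, but it must be stated carefully so that the constancy of intersection numbers on fibres can be used to propagate bigness and nefness from $s$ to nearby points $u\in U$.
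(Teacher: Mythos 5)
Your proof is correct and follows essentially the same route as the paper: localise at $s$, apply \autoref{cor: lifting_Fano} over the DVR $\mathcal{O}_{S,s}$, then combine \autoref{lem: spread_out} with a spreading-out of the big and semi-ample condition. The paper's own proof leaves the second spreading-out step implicit, whereas you make it explicit via the relative semi-ample model and constancy of intersection numbers in the flat family; this is a faithful elaboration rather than a different argument.
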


\begin{proof}
    Consider the localisation $R_s \coloneqq \mathcal{O}_{S, s}$ at $s \in S$, which is a DVR with fraction field $K=\Frac(S)$.
    By \autoref{cor: lifting_Fano},  $(X_K, \Delta_K)$ is geometrically terminal (resp.~ canonical) and $-(K_{X_{R_s}}+\Delta_{R_s})$ is big and semi-ample. 
   Therefore, there exists an open set $U$ containing $s$ for which all fibres $u \in U$ the pair $(X_u, \Delta_u)$ is terminal (resp.~ canonical) by \autoref{lem: spread_out} and $-(K_{X_u}+\Delta_u)$ is big and semi-ample. 
\end{proof}

\begin{corollary}\label{cor: minimal_models}
Let $S$ be a mixed characteristic Dedekind domain whose closed points have perfect residue field $p >5$ and let $(X, \Delta) \to \Spec(S)$ be a projective family of couples.

Let $s \in \Spec(S)$ be a closed point such that $(X_s, \Delta_s)$ is a 3-dimensional minimal model with terminal (resp.~canonical) singularities.
Suppose one of the following holds:
\begin{enumerate}
    \item $\characteristic(R)=0$; or
    \item $\characteristic(R)=p>5$  and $\nu(K_{X_s}+\Delta_s) \neq 1$.
\end{enumerate}
Then, there exists an open set $U \subset \Spec(S)$ such that $(X_u, \Delta_u)$ is a good minimal model with terminal (resp.~ canonical) singularities for all $u \in U$. 
\end{corollary}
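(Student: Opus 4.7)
The plan is to reduce to the DVR setting by localising $S$ at $s$, apply the lifting results \autoref{cor: lift_MM_char0} or \autoref{cor: lift_MM_charp}, and then spread the resulting relative good minimal model structure to an open neighbourhood of $s$ in $\Spec(S)$. This parallels the strategy used in \autoref{cor: def_Fano} for weak Fano families.

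First, I set $R_s \coloneqq \mathcal{O}_{S,s}$, which is a DVR with fraction field $K = \Frac(S)$ and residue field $\kappa(s)$. The restricted family $(X_{R_s},\Delta_{R_s}) \to \Spec(R_s)$ has special fibre $(X_s,\Delta_s)$, which is by hypothesis a 3-dimensional terminal (resp.\ canonical) minimal model. Invoking \autoref{cor: lift_MM_char0} in case (1) and \autoref{cor: lift_MM_charp} in case (2) (the numerical dimension hypothesis $\nu \neq 1$ is precisely what these corollaries require in positive characteristic), I conclude that $(X_{R_s},\Delta_{R_s})$ is a good relative minimal model over $\Spec(R_s)$ with terminal (resp.\ canonical) singularities. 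In particular, $(X_K,\Delta_K)$ is geometrically terminal (resp.\ canonical), and there exists an integer $m \geq 1$ such that $m(K_{X_{R_s}}+\Delta_{R_s})$ is Cartier and relatively base-point-free over $\Spec(R_s)$.

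Next, I spread these two properties to a Zariski open neighbourhood of $s$ in $\Spec(S)$. On the one hand, applying \autoref{lem: spread_out} to $(X,\Delta)\to\Spec(S)$, the geometric terminality (resp.\ canonicity) of $(X_K,\Delta_K)$ yields an open $U_1 \subset \Spec(S)$ such that $(X_u,\Delta_u)$ is terminal (resp.\ canonical) for every $u \in U_1$. By arguing exactly as in the proof of \autoref{cor: def_Fano}, one may arrange $s\in U_1$ using that $(X_s,\Delta_s)$ is already terminal (resp.\ canonical). On the other hand, the Cartier property and relative base-point-freeness of $m(K_{X_{R_s}}+\Delta_{R_s})$ spread to an open neighbourhood $U_2 \ni s$ in $\Spec(S)$ by standard spreading-out arguments (cf.\ \cite{EGAIV.III}) combined with cohomology and base change applied to the pushforward of $\mathcal{O}_X(m(K_X+\Delta))$.

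Setting $U \coloneqq U_1 \cap U_2$, for every $u \in U$ the pair $(X_u,\Delta_u)$ is terminal (resp.\ canonical), while $m(K_{X_u}+\Delta_u)$ is Cartier and base-point-free by restriction of the relative morphism defined over $U_2$, hence $K_{X_u}+\Delta_u$ is semi-ample. This shows that $(X_u,\Delta_u)$ is a good minimal model with the required singularities for all $u \in U$. The main obstacle I anticipate is the point that \autoref{lem: spread_out} only guarantees an open set containing the generic point, so to ensure $s \in U_1$ one must select a log resolution of $(X,\Delta)$ compatible with the central fibre $(X_s,\Delta_s)$, relying on the assumed goodness of $(X_s,\Delta_s)$, precisely as done implicitly in \autoref{cor: def_Fano}.
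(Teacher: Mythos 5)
Your proposal is correct and follows essentially the same route as the paper, which simply localises at $s$, invokes \autoref{cor: lift_MM_char0} or \autoref{cor: lift_MM_charp} in place of \autoref{cor: lifting_Fano}, and then repeats the spreading-out argument of \autoref{cor: def_Fano}. Your final worry about ensuring $s \in U_1$ is easily dispatched: since $\Spec(S)$ is one-dimensional, the union of a nonempty open set with the closed point $s$ is again open, and $(X_s,\Delta_s)$ is terminal (resp.\ canonical) and good by hypothesis together with the conclusion of the cited corollaries over $R_s$.
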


\begin{proof}
    We can repeat the same proof of \autoref{cor: def_Fano} using \autoref{cor: lift_MM_char0} and \autoref{cor: lift_MM_charp} .
\end{proof}

\section{Deformations of non-klt canonical singularities in characteristic 0}\label{s-char_0}

Throughout this section, we fix $R$ to be an excellent DVR with residue field of characteristic 0.

In \cite{dFH11}, de Fernex and Hacon showed that, in characteristic 0, small deformations of canonical pair singularities that are also klt remain canonical; see \cite{dFH11}*{Proposition 3.5.(b)}.
In particular, they did not address the case of canonical pair singularities that are not klt.
Also, they assume that $R$ is the local ring of a smooth curve, as opposed to a more general DVR.
The arguments of \autoref{thm: def_canonical} apply to treat the non-klt case and the case of real coefficients in arbitrary dimension in characteristic 0.
In light of \cite{LM22}, the case of general $R$ is also addressed.
Thus, for the sake of completeness, we include the following statement.

\begin{theorem}[cf.~\cite{dFH11}*{Proposition 3.5.(b)}]\label{theorem: char_0}
Let $(X,\Delta) \to \Spec(R)$ be a family of couples such that $(X_k, \Delta_{X_k})$ is a canonical (resp.~ terminal) pair.
Then $(X, \Delta)$ is a canonical (resp.~ terminal) pair.
Moreover, $(X_{\overline{K}}, \Delta_{\overline{K}})$ is canonical (resp.~ terminal) and therefore $(X, \Delta)$ is a locally stable family of pairs.
\end{theorem}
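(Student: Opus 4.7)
The plan is to mirror the proof of \autoref{thm: def_canonical}, replacing each appeal to the MMP results of \autoref{s-MMP-input} by their characteristic-zero analogues, which are available in arbitrary dimension via \cite{BCHM10} and \cite{LM22}. Since $\characteristic(k)=0$, we do not need to invoke \autoref{hyp}, as resolutions of singularities exist by Hironaka. First I would establish that $K_X+\Delta$ is $\mathbb{R}$-Cartier. Writing $\Delta=\sum_{i=1}^m b_iB_i$ and picking a basis $(d_1,\dots,d_l)$ of positive rationals summing to $1$ for the $\mathbb{Q}$-span of $\{1,b_1,\dots,b_m\}$, I obtain the canonical decomposition $\Delta=\sum d_i D_i$ into $\mathbb{Q}$-divisors. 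Since canonical singularities are Gorenstein in codimension $2$ and satisfy Serre's condition $(S_3)$ by Elkik's theorem \cite{Elk81}, the same argument as in the first part of the proof of \autoref{thm: def_canonical}, based on \cite{dFH11}*{Proposition 3.1}, shows that each $K_X+D_i$ is $\mathbb{Q}$-Cartier, and hence $K_X+\Delta$ is $\mathbb{R}$-Cartier.

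By plt (resp.\ log canonical) inversion of adjunction \cites{km-book, kawakita}, we conclude that $(X,X_k+\Delta)$ is plt (resp.\ log canonical). To show that $(X,\Delta)$ is canonical, assume for contradiction there exists an exceptional divisorial valuation $P$ with $a(P,X,\Delta)<0$. By \autoref{lem: lemma_trick}, its centre on $X$ must dominate $\Spec(R)$. Using \cite{BCHM10} together with \cite{LM22}, one constructs a projective birational morphism $\pi\colon Z\to X$ extracting $P$ with $-P_Z$ being $\pi$-nef, as in \autoref{cor: existence_extractions}. Since $Z_k$ is Cartier on $Z$ and $P_Z\cap Z_k$ is a divisor in $\pi^{-1}_*X_k$ that is $\pi_k^\nu$-exceptional on the normalisation $(\pi^{-1}_*X_k)^\nu$, adjunction produces a strictly positive coefficient of $\textup{Diff}_{(\pi^{-1}_*X_k)^\nu}(\Delta_Z)$ along one of these components, contradicting the canonicity of $(X_k,\Delta_k)$.

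For the terminal case, the above already yields that $(X,\Delta)$ is canonical; since $\lfloor\Delta_k\rfloor=0$ and every component of $\Delta$ is horizontal over $\Spec(R)$ by definition of family of couples, we have $\lfloor\Delta\rfloor=0$, so $(X,\Delta)$ is in fact klt. Should some exceptional divisorial valuation $Q$ satisfy $a(Q,X,\Delta)\leq 0$, one extracts it via a terminalisation $(W,D_W)\to(X,\Delta)$ furnished by \cite{BCHM10} in arbitrary dimension, and the same adjunction argument on the normalisation of the strict transform of $X_k$ on $W$ then contradicts terminality of $(X_k,\Delta_k)$. Finally, since $\characteristic(K)=0$, every algebraic extension of $K$ is separable, and \cite{kk-singbook}*{Proposition 2.15} implies that $(X_{\overline{K}},\Delta_{\overline{K}})$ is canonical (resp.\ terminal); the local stability assertion then follows by definition.

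The main technical point, which distinguishes the present statement from \cite{dFH11}*{Proposition 3.5.(b)}, is the availability of the MMP extractions and terminalisations over a general excellent DVR of residue characteristic zero, rather than over the local ring of a smooth curve as in \cite{dFH11}. This is precisely the input furnished by \cite{LM22}, and it is what enables the simultaneous removal of the klt hypothesis, the allowance of $\mathbb{R}$-coefficients, and the treatment of arbitrary dimension and excellent DVR bases.
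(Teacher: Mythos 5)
Your proposal is correct and follows essentially the same route as the paper, which simply observes that the proof of \autoref{thm: def_canonical} applies verbatim once the MMP inputs of \autoref{s-MMP-input} are replaced by their characteristic-zero counterparts from \cite{LM22} (with Elkik's $(S_3)$ result standing in for \cite{BK20}, exactly as you note). The extra remarks you add --- that \autoref{hyp} is unnecessary by Hironaka and that $\lfloor\Delta\rfloor=0$ in the terminal case --- are consistent with, and implicit in, the paper's argument.
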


\begin{proof}
    The proof of \autoref{thm: def_canonical} applies verbatim since the needed statements from the MMP used in \autoref{s-MMP-input} are well established in every dimension in characteristic 0 by \cite{LM22}.
\end{proof}

\section{Jumps of the Cartier index of $K_X$} \label{s-example}

In this section, we show that the index of $K_X$ can jump for families of canonical singularities in equi-characteristic $p=2$. 

\begin{example}\label{jump:imperfect}
Let $k$ be an algebraically closed field of characteristic 2 and let $K \coloneqq k(t)$ be a purely transcendental extension.
Let $S$ be the del Pezzo surface over $K$ constructed by Schr\"{o}er in \cite{Sch07}.
We recall the properties of $S$ that we will need:
\begin{enumerate}
    \item $S$ is geometrically integral and not geometrically normal by \cite{Sch07}*{Theorem 8.1};
    \item $S$ has canonical singularities \cite{Sch07}*{Theorem 8.2};
    \item \label{itm: vanishing} for any ample line bundle $\mathcal{H}$ on $S$, $H^1(S, \mathcal{H})=0$ by \cite{Sch07}*{Proposition 5.1} and flat base change; and
    \item \label{itm: non-vanishing} $\Pic_{S/K}^{0} \simeq \mathbb{G}_{a,K}$ by \cite{Sch07}*{Equation 14}, in particular $h^1(S,\cO_S)=1$ holds.
\end{enumerate}
Consider $\pi \colon Y \coloneqq S \times \mathbb{G}_{a,K} \to \mathbb{G}_{a,K} \simeq \Spec K[u]$ together with the Poincar\'{e} line bundle $\mathcal{L}$ \cite{FGA}*{Exercise 9.4.3}.
We denote by $\tau \colon Y \rightarrow S$ the other projection.
Note that $\mathcal{L}_0=\mathcal{O}_S$ while $\mathcal{L}_u$ is a non-trivial $2$-torsion line bundle on $S$ for all $u\in\mathbb{G}_{a,K} \setminus \lbrace 0\rbrace$.
Consider the line bundle $\mathcal{M} \coloneqq \tau^*\omega_S^{\vee} \otimes \mathcal{L}$ and let $f \colon X \coloneqq  \Spec_{\mathbb{G}_{a,K}} \bigoplus_{n \geq 0} \pi_*(\mathcal{M}^{\otimes n}) \to \mathbb{G}_{a,K}$ be the relative cone over $\mathbb{G}_{a,K}$. 
We claim that, for all $u\in\mathbb{G}_{a,K}$, the fibre $X_u$ is the cone over $S$ with respect to the ample line bundle $\omega_S^{\vee}\otimes \mathcal{L}_u$.
This is equivalent to the surjectivity of the restriction map
\[
H^0(Y,\mathcal{M}^{\otimes n}) \otimes k(u) \to H^0(Y_u,\mathcal{M}_u^{\otimes n}) \simeq H^0(S, \mathcal{M}_u^{\otimes n})
\]
for all $u \in\mathbb{G}_{a,K}$ and all $n \geq 0$. As $Y$ is a product, this is immediate for $n=0$.
By Grauert's theorem \cite{Ha77}*{Corollary 12.9} and cohomology and base change \cite{Ha77}*{Theorem 12.11}, to show the surjectivity for $n \geq 1$, it is enough to show the vanishing of $H^1(S,\mathcal{M}_u^{\otimes n})$. 
As $\mathcal{M}_u^{\otimes n}$ is ample, we have the desired vanishing by \autoref{itm: vanishing}.

By \cite{kk-singbook}*{Lemma 3.1}, $X \to \mathbb{G}_{a,K}$ is a 3-dimensional family such that 
$X_0$ and $X$ are canonical.
Moreover $K_{X_0}$ is Cartier and $K_{X_u}$ is $\mathbb{Q}$-Cartier of index $2$ for all $u\in\mathbb{G}_{a, K} \setminus \{0\}$ by \cite{kk-singbook}*{Lemma 3.14}. 


A word of caution: it is true that $K_{X_0}$ is Cartier but $X_0$ is not $(S_3)$, in particular $X_0$ is not Gorenstein so we cannot apply \cite{stacks-project}*{\href{https://stacks.math.columbia.edu/tag/0BJJ}{Tag 0BJJ}}.
The fact that $X_0$ is not $(S_3)$ can be shown as follows.
Since $X_0$ is a 3-dimensional cone, its structure sheaf is $(S_3)$ at the vertex if and only if it is Cohen--Macaulay.
Thus, it suffices to show that $X_0$ is not Cohen--Macaulay.
Then, this follows by \cite{kk-singbook}*{Corollary 3.11} and property \autoref{itm: non-vanishing}.
\end{example}

The following example shows \autoref{t-example}.

\begin{example}\label{jump:perfect}
    Using the ``spreading out'' technique, we construct an example of a deformation of a 4-dimensional, non-$(S_3)$ canonical singularity over a perfect field of characteristic 2, $\varphi\colon\cX\to \bA^1_{k}\coloneqq\Spec(k[u])$, where $K_{\mathcal{X}_u}$ has index $2$ for $u\in\mathbb{A}^1_k\setminus\lbrace 0\rbrace$, but $K_{\mathcal{X}_0}$ is Cartier. 
    
    By spreading out the example of \cite{Sch07}, there exists a non-empty open set $T \subset \mathbb{A}^1_k=\Spec (k[t])$ such that 
    \begin{enumerate}
    \item there is a projective flat morphism $s \colon \mathcal{S} \to T$;
    \item the total space $\mathcal{S}$ has canonical singularities and $\mathcal{S}_K \simeq S$;
    \item $\omega_{S}^{\vee}$ is ample over $T$ and $\rho(\mathcal{S}/T)=1$; and
    \item $\Pic^0_{\mathcal{S}/T} \simeq \mathbb{G}_{a, T}  \coloneqq  \mathbb{G}_{a,k} \times_k T=\Spec(k[u]) \times_k T$.
    \end{enumerate} 
    Consider the Poincar\'{e} line bundle $\mathcal{P}$ over $\pi \colon \mathcal{S} \times_T \mathbb{G}_{a,T} \to \mathbb{G}_{a,T}$, which is a spreading out of the Poincar\'{e} bundle $\mathcal{L}$ of \autoref{jump:imperfect}. Consider the natural projection $\psi\coloneqq \cS\times_T\mathbb{G}_{a,T}\to \mathbb{G}_{a,k}=\Spec(k[u])$: then, for all $u \in \mathbb{G}_{a,k} \setminus \{0\}$, we have that the base change of the Poincar\'e line bundle $\mathcal{P}_u$ is 2-torsion but not trivial. 
    We denote the projection onto the first factor by $\tau \colon \mathcal{S} \times_T \mathbb G _{a,T} \rightarrow \mathcal{S}$ and we define $\mathcal{N} \coloneqq  \mathcal{P} \otimes \tau^*{\omega_{\mathcal S}^{\vee}}$.
    Consider the relative cone $\mathcal{X} \coloneqq \Spec_{\mathbb{G}_{a,T}} \bigoplus_{n \geq 0} \pi_* (\mathcal{N}^{\otimes n}) \to \mathbb{G}_{a,T}$ and denote by $\varphi \colon \mathcal{X} \to \mathbb{G}_{a,k}$ the natural projection.
    Again by cohomology and base change, we have $\mathcal{X}_0= \Spec_T \bigoplus_{n \geq 0} \pi_* \mathcal{O}_{\mathcal{S}}( -nK_{\mathcal{S}}) \to T \to \Spec(k)$ is a canonical singularity of dimension 4 and with Cartier index 1 by \cite{kk-singbook}*{Lemma 3.1}, while $\mathcal{X}_{u}$ has Cartier index 2 for $u \neq 0$ by \autoref{jump:imperfect}.
\end{example}

\bibliographystyle{amsalpha}
\bibliography{refs}
	
\end{document}